\title[Abelianizations of derivation Lie algebras]
{Abelianizations of derivation Lie algebras 
of the free associative algebra and the free Lie algebra
}
\author[Morita]{Shigeyuki Morita}
\email{morita@ms.u-tokyo.ac.jp}
\author[Sakasai]{Takuya Sakasai}
\address{Graduate School of Mathematical Sciences, 
The University of Tokyo, 
3-8-1 Komaba Meguro-ku Tokyo 153-8914, Japan}
\email{sakasai@ms.u-tokyo.ac.jp}
\author[Suzuki]{Masaaki Suzuki}
\address{Department of Mathematics, 
Akita University, 
1-1 Tegata-Gakuenmachi, Akita, 010-8502, Japan}
\email{macky@math.akita-u.ac.jp}
\subjclass[2000]{Primary~17B56; 32G15; 55R40, Secondary~17B65; 20J06}
\keywords{moduli space of curves, Lie algebra homology, graph homology, derivation}
\newtheorem{thm}{Theorem}[section]
\newtheorem{lem}[thm]{Lemma}
\newtheorem{cor}[thm]{Corollary}
\theoremstyle{definition}
\newtheorem{definition}[thm]{Definition}
\newtheorem{remark}[thm]{Remark}
\newtheorem{conj}[thm]{Conjecture}
\begin{document}

\newcommand{\Mg}{\mathcal{M}_g}
\newcommand{\Mgp}{\mathcal{M}_{g,\ast}}
\newcommand{\Mgb}{\mathcal{M}_{g,1}}

\newcommand{\hg}{\mathfrak{h}_g}
\newcommand{\ag}{\mathfrak{a}_g}
\newcommand{\Ln}{\mathcal{L}_n}

\newcommand{\Sg}{\Sigma_g}
\newcommand{\Sgb}{\Sigma_{g,1}}

\newcommand{\Symp}[1]{Sp(2g,\mathbb{#1})}
\newcommand{\symp}[1]{\mathfrak{sp}(2g,\mathbb{#1})}
\newcommand{\gl}[1]{\mathfrak{gl}(n,\mathbb{#1})}

\newcommand{\At}[1]{\mathcal{A}_{#1}^t (H)}
\newcommand{\Hq}{H_{\mathbb{Q}}}

\newcommand{\Ker}{\mathop{\mathrm{Ker}}\nolimits}
\newcommand{\Hom}{\mathop{\mathrm{Hom}}\nolimits}
\renewcommand{\Im}{\mathop{\mathrm{Im}}\nolimits}

\newcommand{\Der}{\mathop{\mathrm{Der}}\nolimits}
\newcommand{\Out}{\mathop{\mathrm{Out}}\nolimits}
\newcommand{\Aut}{\mathop{\mathrm{Aut}}\nolimits}
\newcommand{\Q}{\mathbb{Q}}
\newcommand{\Z}{\mathbb{Z}}

\begin{abstract}
We determine the abelianizations of 
the following three kinds of graded Lie algebras
in certain stable ranges: 
derivations of the free associative algebra, 
derivations of the free Lie algebra and 
symplectic derivations of the free associative algebra. 
In each case, we consider both the whole derivation Lie algebra
and its ideal consisting of derivations with {\it positive} degrees.
As an application of the last case, 
and by making use of a theorem of Kontsevich,
we obtain a
new proof of the
vanishing theorem of Harer concerning the
top rational cohomology group of the mapping class group
with respect to its virtual cohomological dimension.
\end{abstract}

\renewcommand\baselinestretch{1.1}
\setlength{\baselineskip}{16pt}

\newcounter{fig}
\setcounter{fig}{0}

\maketitle

%
%
%
%

\section{Introduction and statements of the main results}\label{sec:intro}

In this paper, we consider graded Lie algebras,
over $\Z$, consisting
of derivations of free associative or free Lie algebras
generated by a free abelian group of finite rank.
We also consider the cases where the rank 
is even and 
equipped with a non-degenerate skew-symmetric bilinear form.
In this case, we consider the graded Lie algebras consisting
of {\it symplectic} derivations. We also consider the rational
forms of them.
These Lie algebras appear 
naturally in various aspects of topology and it should be
an important problem to analyze the structure of them.

To be more precise, let $H_n$ denote the free abelian group of
rank $n$ generated by $x_1,x_2, \ldots,x_n$
and let $T(H_n), \mathcal{L}_n$ be the free associative algebra 
without constant terms and the free Lie algebra generated by $H_n$, respectively. 
We denote by $\Der (T(H_n))$ and 
$\Der (\mathcal{L}_n)$ the graded Lie algebras consisting of
derivations of $T(H_n)$ and $\mathcal{L}_n$.
In the case where $n=2g$ and $H_{2g}\otimes\Q$
is equipped with a skew-symmetric bilinear form
so as to be identified with the standard symplectic
vector space of dimension $2g$, 
we denote by
$\mathfrak{a}_g$ and $\mathfrak{h}_{g,1}$ the Lie subalgebras
of $\Der (T(H_{2g}\otimes\Q))$ and 
$\Der (\mathcal{L}_{2g}\otimes\Q)$ consisting of {\it symplectic} derivations, 
respectively. 
See Sections \ref{sec:associative}, 
\ref{sec:lie}, \ref{sec:symp} 
for detailed definitions.

Our main result concerns the abelianizations of the above Lie algebras
as well as certain ideals of them in certain stable ranges.
The natural inclusion $H_n\subset H_{n+1}$ induces 
a sequence
$$
\Der (\mathcal{L}_1)\subset\Der (\mathcal{L}_2)\subset
\cdots\subset \Der (\mathcal{L}_\infty)
$$
of embeddings of Lie algebras where the last Lie algebra
denotes the union of the preceding ones. We also consider
similar series for the other Lie algebras. The abelianization 
$\Der (\mathcal{L}_\infty)/[\Der (\mathcal{L}_\infty),\Der (\mathcal{L}_\infty)]$ 
of the limit algebra, denoted by $H_1(\Der (\mathcal{L}_\infty))$,
is nothing other than the direct limit
$\lim_{n\to\infty} H_1(\Der (\mathcal{L}_n))$
of the abelianization of each member of the above series
and we call this the {\it stable} abelianization.

Now our main result is the first and the third cases of the following theorem which determines the stable abelianization of the three Lie algebras. 
The second statement follows from Theorem \ref{thm:lie} 
which gives a slight improvement of a beautiful 
work of Kassabov \cite[Theorem 1.4.11]{kassabov} 
and our proof is very close to the original one. 

\begin{thm}\label{thm:h1}
The stable abelianizations of the three Lie algebras
$\Der (T(H_n)), \Der (\mathcal{L}_n), \mathfrak{a}_g$
are given as follows.
\begin{align*}
&\mathrm{(i)} \ \lim_{n\to\infty} H_1(\Der (T(H_n))\cong \Z\\
&\mathrm{(ii)} \ \lim_{n\to\infty} H_1(\Der (\mathcal{L}_n))\cong \Z\\
&\mathrm{(iii)} \ \lim_{g\to\infty} H_1(\mathfrak{a}_g)=0.
\end{align*}
\end{thm}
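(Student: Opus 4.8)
The plan is to compute the stable abelianization of $\mathfrak{a}_g$ degree by degree, using the fact that $\mathfrak{a}_g$ is graded, $\mathfrak{a}_g = \bigoplus_{k\geq 0}\mathfrak{a}_g(k)$, with $\mathfrak{a}_g(0)\cong\symp{Q}$. Since the degree $0$ part is semisimple and acts on everything, the bracket $[\mathfrak{a}_g(0),\mathfrak{a}_g(k)]$ already kills, stably, the non-trivial $Sp$-isotypic components, so what really needs analysis is the image of the brackets $[\mathfrak{a}_g(j),\mathfrak{a}_g(k)]$ with $j,k\geq 1$ landing in a fixed degree $d$. First I would recall Kontsevich's description of $\mathfrak{a}_g$ in terms of cyclic words / chord diagrams, which identifies $\mathfrak{a}_g(k)$ as an $Sp$-module with an explicit space spanned by ``$(k+2)$-gons'' (cyclic tensors in $H_{\Q}^{\otimes(k+2)}$), and the bracket as a gluing operation. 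In this picture the degree $0$ part corresponds to ``2-gons'' and the symplectic form $\omega\in\wedge^2 H_{\Q}$ gives the distinguished element of degree $0$.

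The key step is then a spanning argument: I would show that, in the stable range, every element of $\mathfrak{a}_g(d)$ for $d\geq 1$ lies in the span of brackets of lower-degree elements. The cleanest route is to exhibit, for each basis chord diagram $D$ of degree $d\geq 1$, an explicit way to ``cut'' $D$ into two smaller chord diagrams $D_1,D_2$ of positive degrees such that $[D_1,D_2]=D$ plus terms that are either of the same form (allowing an induction on some secondary complexity) or already known to be decomposable. For $d=1$, i.e. the first non-trivial piece (which classically is $\wedge^3 H_{\Q}$, the image of the Johnson homomorphism), I expect to need a small separate computation showing $\wedge^3 H_{\Q}=[\mathfrak{a}_g(0),\mathfrak{a}_g(1)]$ stably, which follows from decomposing $\mathfrak{a}_g(0)\otimes\mathfrak{a}_g(1)$ and checking the relevant $Sp$-equivariant projection is onto — here the contraction using $\omega$ is what produces new elements. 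Having handled low degrees, the inductive gluing argument should close everything up.

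The main obstacle is the bookkeeping for the bracket: when one glues two chord diagrams the result is a signed sum over ways of pairing a vertex of $D_1$ with a vertex of $D_2$, so $[D_1,D_2]$ is never a single diagram, and one must arrange the cut so that a chosen target diagram appears with coefficient $\pm1$ while all the other terms are provably already in the commutator subalgebra (or can be absorbed by the induction hypothesis). Making this precise uniformly in $d$, and verifying that the needed ``separating chord'' always exists once $g$ is large compared to $d$ (the source of the stable range), is where the real work lies. A clean way to organize it is to filter $\mathfrak{a}_g(d)$ by the number of chords incident to a fixed vertex and induct, or alternatively to invoke Kontsevich's identification of $\bigoplus_g H_*(\mathfrak{a}_g)$ with a graph-homology complex and read off $H_1$ from the known low-degree graph homology, but I would prefer the direct spanning argument as it stays self-contained.

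Finally, once $\bigoplus_{d\geq 1}\mathfrak{a}_g(d)\subseteq[\mathfrak{a}_g,\mathfrak{a}_g]$ stably is established, it remains only to see that the degree $0$ part $\symp{Q}$ is perfect (which it is, being semisimple), so $[\mathfrak{a}_g,\mathfrak{a}_g]=\mathfrak{a}_g$ in the limit and hence $\lim_{g\to\infty}H_1(\mathfrak{a}_g)=0$.
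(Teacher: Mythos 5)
Your proposal addresses only part (iii); parts (i) and (ii) of the theorem, which require separate arguments about $\Der(T(H_n))$ and $\Der(\Ln)$ (Theorem \ref{thm:associative} and Theorem \ref{thm:lie} in the paper), are not touched. For (iii) itself, your framework is the same as the paper's: split off $\ag(0)\cong\symp{Q}$, use its perfectness to dispose of degree $0$, use the semisimple action of $\ag(0)$ to kill all non-trivial $Sp$-isotypic components in positive degree, and reduce everything to showing that, stably, $\ag(k)$ for $k\ge 3$ is spanned by brackets of positive-degree elements (degrees $1$ and $2$ contribute $S^3H_\Q\oplus\wedge^3H_\Q$ and $\wedge^2H_\Q/\langle\omega_0\rangle$ to $H_1(\ag^+)$, both without trivial summands, so their coinvariants vanish). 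Two small slips: $\ag(1)\cong S^3H_\Q\oplus\wedge^3H_\Q$, not just $\wedge^3H_\Q$ (you are thinking of $\mathfrak{h}_{g,1}(1)$) — harmless here since both summands are non-trivial irreducibles; and your fallback of ``reading off $H_1$ from Kontsevich's graph homology'' would be circular, since Theorem \ref{thm:kontsevich} translates $H_1(\lim\ag)$ into exactly the top cohomology of moduli spaces that the paper is trying to compute.

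The genuine gap is that the core of the argument — the stable vanishing of $H_1(\ag^+)_k$ for $k\ge 3$, which is Theorem \ref{thm:symp} — is only sketched, and the sketch as stated would not close up. Your plan is to cut each chord diagram $D$ into $D_1,D_2$ so that $[D_1,D_2]=D\pm(\text{terms handled by induction})$, i.e.\ a direct exhibition of every generator as a bracket plus decomposables. In the paper this works only after a substantial reduction (Lemma \ref{lem:standard_form}: a double induction on the configurations $\mathcal{F}_l$ and on the multiplicity $m(C)$, which can temporarily \emph{decrease} the configuration index and must be controlled by the multiplicity), and even then the surviving ``standard form'' diagrams split into four cases according to $k\bmod 4$. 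In the cases $k\equiv 2,3\pmod 4$ the standard-form diagram $\widetilde S$ is \emph{not} produced directly as a bracket with coefficient $\pm1$: cutting it yields $\widetilde S=[\cdot,\cdot]-S'-S''$ where $S''$ is the mirror of $S'$ up to a symplectic transformation (Lemma \ref{lem:mirror}), and after normalizing colors (Lemma \ref{lem:strong_form}) one obtains a relation $\widetilde S\equiv 2m\widetilde S$, hence $\widetilde S\equiv 0$. This indirect cancellation step is an essential idea with no counterpart in your outline, and without it a ``coefficient $\pm1$'' induction has no visible termination in half the congruence classes. So while the strategy is the right one, the proposal stops short of the actual content of the proof.
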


Let $\Der^+ (T(H_n))$ and $\Der^+ (\mathcal{L}_n)$
denote the ideals of $\Der(T(H_n))$ and $\Der(\mathcal{L}_n)$
consisting of derivations of {\it positive} degrees. 
Similarly we denote by 
$\mathfrak{a}^+_g\subset \mathfrak{a}_g$ and $\mathfrak{h}^+_{g,1}\subset \mathfrak{h}_{g,1}$
the ideals consisting of derivations of positive degrees.
The proof of Theorem \ref{thm:h1} is based on careful studies
of the bracket operations in these ideals. 
We can summarize our results on the structures
of these ideals as follows (see more precise
statements in Sections \ref{sec:associative}, 
\ref{sec:lie}, \ref{sec:symp}).

\begin{thm}\label{thm:h1+}
The Lie algebras $\Der^+(T(H_n))$ and $\mathfrak{a}^+_g$ are
``finitely generated" in certain stable ranges. 
More precisely we have the following.
\begin{align*}
& \mathrm{(i)} \ \text{Up to degree $n-1$, $\Der^+(T(H_n))$
is generated by the degree $1$ part $H_n^*\otimes H_n^{\otimes 2}$}
\\
& \hspace{4cm}    \text{together with a certain summand $H_n^{\otimes 2}$ of degree $2$ }\\
& \mathrm{(ii)}\ \text{Up to degree $g$, $\mathfrak{a}_g^+$
is generated by the degree $1$ part $S^3 H_\Q\oplus \wedge^3 H_\Q$}\\
& \hspace{4cm}    \text{together with a certain summand $\wedge^2 H_\Q/\langle\omega_0\rangle$ of degree $2$ }
\end{align*}
where $S^3 H_\Q$ and $\wedge^3 H_\Q$ denote
the third symmetric and exterior powers of 
the symplectic vector space $H_\Q=H_{2g}\otimes\Q$ respectively.
Also $\langle\omega_0\rangle$ denotes the submodule
of the second exterior power $\wedge^2 H_\Q$ spanned by the
symplectic class.
\end{thm}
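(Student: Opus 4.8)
The plan is to reduce Theorem~\ref{thm:h1+} to two assertions about the degree-$1$ bracket in each of the two Lie algebras, and then run an induction on degree. We use the standard explicit models. A degree-$d$ derivation of $T(H_n)$ is an element of $\Hom(H_n,H_n^{\otimes(d+1)})$, so $\Der^+(T(H_n))=\bigoplus_{d\ge1}H_n^*\otimes H_n^{\otimes(d+1)}$ with the commutator bracket of derivations, and the elementary derivations $E^{i_0i_1\cdots i_d}_a$ (sending $x_a$ to $x_{i_0}\otimes\cdots\otimes x_{i_d}$ and the other generators to $0$) form a $\Z$-basis of the degree-$d$ part. For the symplectic case we invoke Kontsevich's description: the degree-$d$ part of $\ag$ is identified with the space of cyclic words of length $d+2$ in $x_1,\dots,x_g,y_1,\dots,y_g$, i.e. with the coinvariants $(\Hq^{\otimes(d+2)})_{\Z/(d+2)}$, the bracket being the sum over all ways of cutting the two cyclic words at a letter and splicing them along a dual symplectic pair $x_j,y_j$. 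In particular the degree-$1$ parts are $H_n^*\otimes H_n^{\otimes2}$ and $(\Hq^{\otimes3})_{\Z/3}\cong S^3\Hq\oplus\wedge^3\Hq$, as asserted.

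Write $\mathfrak g$ for either Lie algebra, graded as $\mathfrak g=\bigoplus_{d\ge1}\mathfrak g_d$, let $d_0$ be the corresponding bound ($n-1$, resp. $g$), and let $U_2\subset\mathfrak g_2$ be the named degree-$2$ summand ($H_n^{\otimes2}$, resp. $\wedge^2\Hq/\langle\omega_0\rangle$). It suffices to prove: (a) $\mathfrak g_2=[\mathfrak g_1,\mathfrak g_1]+U_2$, with $U_2$ a direct summand and $[\mathfrak g_1,\mathfrak g_1]\cap U_2=0$; and (b) for every $d$ with $3\le d\le d_0$ the bracket $\mathfrak g_1\otimes\mathfrak g_{d-1}\to\mathfrak g_d$ is surjective. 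Indeed, granting (a) and (b), the Lie subalgebra $\mathfrak g'$ generated by $\mathfrak g_1\cup U_2$ contains $\mathfrak g_1$ and, by (a), $\mathfrak g_2$; and if $\mathfrak g_{d-1}\subseteq\mathfrak g'$ for some $d$ with $3\le d\le d_0$, then $\mathfrak g_d=[\mathfrak g_1,\mathfrak g_{d-1}]\subseteq\mathfrak g'$ by (b), so by induction $\mathfrak g'$ contains $\mathfrak g_{\le d_0}$, which is exactly the claim. That $U_2$ is genuinely needed as an extra generator is the content of the inequality $[\mathfrak g_1,\mathfrak g_1]\neq\mathfrak g_2$ in (a).

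Statement (b) is proved with the explicit models, and this is where the numerical bound enters. In the associative case a direct computation gives $[E^{i_0c}_a,E^{i_1\cdots i_d}_c]=-E^{i_0i_1\cdots i_d}_a+R$, where $R$ is supported on the single generator $x_c$ and involves only those positions $j\ge1$ at which $i_j=a$; choosing $c\notin\{a,i_0\}$ therefore expresses $E^{i_0\cdots i_d}_a$ as a bracket whenever $a$ does not occur in $i_1\cdots i_d$, and the remaining basis elements are reduced to this case by a secondary induction on the number of occurrences of $a$ in the word $i_0\cdots i_d$, each reduction step using a fresh index, which is available precisely because $d\le n-1$. The symplectic case is parallel: one shows every cyclic word of length $d+2$ is obtained by splicing a length-$3$ and a length-$(d+1)$ cyclic word at a dual pair, by inserting a fresh symplectic pair $x_j,y_j$ at one letter of the word and cutting there; the bound $d\le g$ is exactly what guarantees a fresh pair at each step, and compatibility of the inclusion $\ag\subset\Der(T(\Hq))$ with brackets lets one transport the associative bookkeeping through the cyclic symmetrization.

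The essential difficulty is statement (a), which is also the only place where the precise identity of the degree-$2$ summand is used. One must compute the image of $\mathfrak g_1\otimes\mathfrak g_1\to\mathfrak g_2$ completely and identify it as the kernel of an explicit equivariant ``contraction'' map onto $U_2$ --- in the associative case a suitable combination of the three contractions $H_n^*\otimes H_n^{\otimes3}\to H_n^{\otimes2}$ pairing $H_n^*$ against one of the output factors, and in the symplectic case the $\omega_0$-contraction $(\Hq^{\otimes4})_{\Z/4}\to\wedge^2\Hq/\langle\omega_0\rangle$ --- and then, in the associative case, exhibit a $\Z$-linear splitting realizing $H_n^{\otimes2}$ as an actual direct summand rather than merely a quotient. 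Rationally the decomposition into $\mathrm{GL}$- (resp. $\mathrm{Sp}$-) isotypic components together with Schur's lemma makes the first half transparent, but pinning the cokernel down on the nose and treating the integral structure force one to write the relevant maps and splittings by hand and to verify a finite list of bracket identities among elementary derivations; this, rather than the combinatorial surjectivity (b), is the main obstacle. Once (a) and (b) are established, Theorem~\ref{thm:h1+} follows, and tracing the degree-$1$ and degree-$2$ generators through the respective abelianizations yields the first and third parts of Theorem~\ref{thm:h1}.
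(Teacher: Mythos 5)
Your reduction to (a) a degree-$2$ splitting and (b) surjectivity in higher degrees is the right skeleton and matches the paper's outline, and your bracket identity $[E^{i_0c}_a,E^{i_1\cdots i_d}_c]=-E^{i_0\cdots i_d}_a+R$ is correct. But both halves of the actual work have gaps. First, (b) as you state it --- surjectivity of $\mathfrak g_1\otimes\mathfrak g_{d-1}\to\mathfrak g_d$ --- is stronger than what the paper establishes in the associative case, and your ``secondary induction on the number of occurrences of $a$'' does not close: each term of $R$ has the form $x_c^*\otimes x_{i_1}\cdots x_{i_{s-1}}\otimes x_{i_0}\otimes x_c\otimes x_{i_{s+1}}\cdots \otimes x_{i_d}$, in which the new subscript letter $c$ still occurs exactly once \emph{inside} the word, so the induction stalls at ``one occurrence,'' which is precisely the hard case. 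Resolving it requires the sliding identity of the paper, which moves the offending letter by \emph{two} slots per degree-$1$ bracket; reaching positions of the other parity forces a bracket against a degree-$2$ element. The statement that actually gets proved (and that still suffices for generation) is
$\Der(T(H_n))(k)=[\Der(T(H_n))(k-1),\Der(T(H_n))(1)]+[\Der(T(H_n))(k-2),\Der(T(H_n))(2)]$
as in Theorem \ref{thm:associative}(2); your induction step must be weakened accordingly.

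Second, the symplectic case cannot be ``transported through the cyclic symmetrization'' in one sentence. The bracket of two spiders is a sum over \emph{all} dual pairs of legs between them, so splicing in a fresh pair $\pm n$ produces, besides the desired term, correction spiders from every pre-existing dual pair straddling the cut. Controlling these corrections is the entire content of Section \ref{sec:symp}: the multiplicity function and the double induction of Lemma \ref{lem:standard_form} reducing to standard-form chord diagrams, then chord slides, chord cyclings, the mirror argument, and --- for $k\equiv2,3\pmod 4$ --- the relation $\widetilde S\equiv 2m\widetilde S$, which kills $\widetilde S$ only because one works over $\Q$. None of this appears in your sketch, and it, rather than the degree-$2$ computation (a), is the main obstacle; also the needed range is $g\ge k+3$, not $k\le g$ as a fresh-pair count alone would suggest. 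For (a) you correctly identify what is required (a contraction, an explicit section, and a finite list of bracket identities), though you do not carry it out, and the paper's associative contraction is the single map $C_{13}$ pairing $H_n^*$ with the middle tensor factor rather than a combination of all three contractions.
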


The important point here is that the numbers of the generating summands
are {\it independent} of $n$ and $g$ whereas the stable ranges
grow linearly with respect to them. We mention that it is still
unknown whether the above ideals are finitely generated in the
usual sense or not.

In a sharp contrast with the above result, the Lie algebras
$\Der^+ (\mathcal{L}_n)$ and $\mathfrak{h}^+_{g,1}$  are known to be {\it not}
finitely generated. In fact, the degree $1$ part and the trace maps introduced in \cite{morita93}
define {\it surjective} homomorphisms
\begin{align*}
&\Der^+ (\mathcal{L}_n) \longrightarrow 
(H_n^*\otimes \wedge^2 H_n) \oplus \bigoplus_{k=2}^\infty S^{k} H_n\\
&\mathfrak{h}^+_{g,1} \longrightarrow 
\wedge^3 H_\Q \oplus \bigoplus_{k=1}^\infty S^{2k+1} H_\Q
\end{align*}
of Lie algebras where the targets are understood
to be {\it abelian} Lie algebras.

A theorem of Kassabov cited above implies that the upper homomorphism induces an isomorphism in the first {\it rational} homology group $H_1(\ \ ;\Q)$ of Lie algebras
in a certain stable range. Our Theorem \ref{thm:lie} implies the same statement 
with respect to the first {\it integral} homology group but with a smaller stable range. 
The first author once conjectured that the lower homomorphism would also induce an isomorphism in $H_1$. However, very recently Conant, Kassabov and Vogtmann \cite{ckv} proved that this is not the case, indicating that the Lie algebra
$\mathfrak{h}_{g,1}$ has a truly deep structure. 
Nevertheless, in view of known results
together with numbers of explicit computations we have made
so far, it seems still reasonable to make the following.

\begin{conj}\label{conj:h}
The stable abelianization of the Lie algebra
$\mathfrak{h}_{g,1}$ vanishes. Namely
$$
\lim_{g\to\infty} H_1(\mathfrak{h}_{g,1})=0.
$$
\end{conj}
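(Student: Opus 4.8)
The conjecture asserts that the stable abelianization of $\mathfrak{h}_{g,1}$ vanishes, so the goal is really to understand the kernel of the surjection
$\mathfrak{h}^+_{g,1}\to\wedge^3 H_\Q\oplus\bigoplus_{k\geq 1}S^{2k+1}H_\Q$
modulo brackets. The plan is to combine the "finite generation up to degree $g$" philosophy of Theorem \ref{thm:h1+}(ii) with the known structure of $\mathfrak{a}_g$, exploiting the natural inclusion $\mathfrak{h}_{g,1}\hookrightarrow\mathfrak{a}_g$ coming from the Jacobi identification of a Lie word with the corresponding cyclic word. Since $\lim_g H_1(\mathfrak{a}_g)=0$ by Theorem \ref{thm:h1}(iii), the composite $\mathfrak{h}^+_{g,1}\to\mathfrak{a}^+_g\to H_1(\mathfrak{a}_g)$ already kills everything stably; what remains is to show that no class in $H_1(\mathfrak{h}_{g,1})$ survives that is not detected in $\mathfrak{a}_g$. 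The trace maps and the degree $1$ part $\wedge^3 H_\Q$ do inject into the corresponding invariants for $\mathfrak{a}_g$ (the odd symmetric powers $S^{2k+1}H_\Q$ being exactly the odd traces and $\wedge^3$ being a summand of $S^3\oplus\wedge^3$), so the strategy is to prove that all of these are in fact boundaries, i.e.\ each lies in $[\mathfrak{h}^+_{g,1},\mathfrak{h}^+_{g,1}]$ once $g$ is large enough relative to the degree.

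The key steps, in order, would be: (1) Establish a generation statement for $\mathfrak{h}^+_{g,1}$ analogous to Theorem \ref{thm:h1+}(ii) — namely that up to degree roughly $g$, $\mathfrak{h}^+_{g,1}$ is generated by its degree $1$ part $\wedge^3 H_\Q$ together with a small number of summands in low degree. This reduces the computation of $H_1$ in the stable range to understanding the relations among brackets of the low-degree pieces. (2) Show that every odd trace component $S^{2k+1}H_\Q$, which is a priori a quotient of a degree $2k+1$ piece of the abelianization, is actually hit by brackets: concretely, exhibit for each $k$ (with $g\gg k$) explicit elements $a\in\mathfrak{h}^{+}_{g,1}$ of degrees summing to $2k+1$ with $[a,a']$ having nontrivial image under the $k$-th trace. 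The representation-theoretic input here is that $S^{2k+1}H_\Q$ must appear in the decomposition of the relevant bracket image as an $Sp$-module, which one checks using the plethysm of $\wedge^3 H_\Q$ and the lower summands. (3) Handle the degree $1$ part $\wedge^3 H_\Q$ itself: show $\wedge^3 H_\Q\subset[\mathfrak{h}^+_{g,1},\mathfrak{h}^+_{g,1}]$ stably, which should follow from the fact that the bracket of degree $1$ elements surjects onto a large $Sp$-submodule of degree $2$, and then iterating / using the $\mathfrak{sp}$-action (the Casimir-type arguments that make the degree $0$ part $\mathfrak{sp}(2g,\Q)$ act transitively enough). (4) Assemble: combining (1)–(3) with the fact that the trace maps plus $\wedge^3$ account for the whole target of the surjection, conclude that $H_1(\mathfrak{h}_{g,1})$ stabilizes to $0$.

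The main obstacle I expect is step (2) — proving that the \emph{odd traces are boundaries}. This is exactly where the first author's analogous conjecture for the lower homomorphism turned out, per Conant--Kassabov--Vogtmann \cite{ckv}, to be subtle: the traces are genuine obstructions at the level of the homomorphism to the abelian target, so the content of the conjecture is precisely that, unlike that target, the traces die once one allows \emph{all} brackets in $\mathfrak{h}^+_{g,1}$, not merely those respecting the splitting. Concretely one needs, for each $k$, an $Sp$-equivariant family of bracket relations producing $S^{2k+1}H_\Q$ in the image; the difficulty is that the naive brackets of low-degree generators land in many $Sp$-summands and isolating the $S^{2k+1}$ component — and showing its coefficient is nonzero rather than accidentally vanishing — requires delicate plethysm computations that grow with $k$. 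A secondary difficulty is making the stable range in step (1) explicit and large enough to cover the degrees needed in step (2); this is the usual tension, visible already in Theorem \ref{thm:h1+}, between the linearly growing stable range and the need to reach arbitrarily high trace degrees. It is plausible that for any \emph{fixed} $k$ the argument goes through, giving vanishing of each graded piece of the stable abelianization, and hence the full conjecture, but a uniform bound is what one would really want.
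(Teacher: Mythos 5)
This statement is Conjecture \ref{conj:h}; the paper offers no proof of it, and it remains open (the paper only records that it is equivalent, via Kontsevich's theorem, to the vanishing of $H^{2n-3}(\Out F_n;\Q)$). So there is no ``paper's proof'' to compare against, and your proposal would need to stand on its own. It does not: several of its steps are not merely difficult but provably false as stated.

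Step (2) asks to show that each odd trace component $S^{2k+1}H_\Q$ is ``hit by brackets,'' i.e.\ lies in $[\mathfrak{h}^+_{g,1},\mathfrak{h}^+_{g,1}]$. This is impossible. The trace maps are, by construction, surjective Lie algebra homomorphisms onto \emph{abelian} targets, so they annihilate every bracket and factor through $H_1(\mathfrak{h}^+_{g,1})$; surjectivity then forces $H_1(\mathfrak{h}^+_{g,1})_{2k+1}$ to surject onto $S^{2k+1}H_\Q\neq 0$ for all $k$ and all $g$. The traces are permanent, nonvanishing classes in the abelianization of $\mathfrak{h}^+_{g,1}$, not boundaries. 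Step (3) fails for an even simpler reason: $\wedge^3 H_\Q$ is the degree $1$ part, and no bracket of two positive-degree elements can land in degree $1$, so $H_1(\mathfrak{h}^+_{g,1})_1=\wedge^3 H_\Q$ identically. Step (1) contradicts a fact the paper states explicitly: $\mathfrak{h}^+_{g,1}$ is known \emph{not} to be finitely generated (precisely because of the infinitely many trace components), so no analogue of Theorem \ref{thm:h1+}(ii) with finitely many generating summands can hold. Finally, the opening move via the inclusion $\mathfrak{h}_{g,1}\hookrightarrow\mathfrak{a}_g$ gives nothing: the induced map $H_1(\mathfrak{h}_{g,1})\to H_1(\mathfrak{a}_g)$ has no reason to be injective, so $\lim_g H_1(\mathfrak{a}_g)=0$ does not constrain $\lim_g H_1(\mathfrak{h}_{g,1})$.

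The actual content of the conjecture is different from what your plan targets. By the split Hochschild--Serre sequence of Section \ref{sec:homology}, $H_1(\mathfrak{h}_{g,1})\cong H_1(\mathfrak{h}^+_{g,1})_{\mathfrak{sp}}\oplus H_1(\mathfrak{sp}(2g,\Q))$, and the second summand vanishes by simplicity of $\mathfrak{sp}$. The traces and the degree $1$ part contribute nontrivial irreducible $\mathrm{Sp}$-modules to $H_1(\mathfrak{h}^+_{g,1})$, but their $\mathfrak{sp}$-coinvariants vanish automatically, so they are not obstructions to the conjecture. The genuine difficulty --- which your proposal does not engage --- is to show that no \emph{trivial} $\mathrm{Sp}$-summand survives in $H_1(\mathfrak{h}^+_{g,1})$ stably. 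Since Conant--Kassabov--Vogtmann showed that $H_1(\mathfrak{h}^+_{g,1})$ is strictly larger than the traces-plus-$\wedge^3$ prediction, one would first need control over these extra classes, which is precisely what is currently missing.
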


The Lie algebra $\mathfrak{a}_g$ was introduced by Kontsevich in \cite{kontsevich1, kontsevich2}. It is one of the three Lie algebras considered in his theory of graph homology. One of the 
other Lie algebras, denoted $\ell_g$ by him, is the same as
$\mathfrak{h}_{g,1}$ which appeared already in the theory
of Johnson homomorphisms of the mapping class groups
both in the contexts of topology and number theory. 
Furthermore this Lie algebra is defined over $\Z$ rather than $\Q$
and the integral structure should be important in both contexts.

Kontsevich proved a remarkable theorem which gives close relations between the stable homology of $\mathfrak{a}_g$ and $\mathfrak{h}_{g,1}$ with the totalities
of the rational cohomology groups of the mapping class groups
(see Theorem \ref{thm:vanishing}), and those of the outer automorphism groups $\Out F_n$ of free groups $F_n\ (n\geq 2)$,
respectively.

If we combine Theorem \ref{thm:h1} with the former case of
this theorem of Kontsevich, we obtain a new proof 
of the following vanishing result
of Harer for the top rational cohomology group of the mapping class group with respect to its virtual cohomological dimension which was also 
determined by Harer \cite{harer}.

\begin{thm}[Harer \cite{harer2}]\label{thm:harerv}
For any $g \ge 2$, the top degree
rational cohomology group
of the mapping class group $\mathcal{M}_g$,
with respect to its virtual cohomological dimension,
vanishes. Namely
$$
H^{4g-5}(\mathcal{M}_g;\Q)=0 \quad (g\geq 2).
$$
\end{thm}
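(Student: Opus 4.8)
The plan is to deduce Theorem~\ref{thm:harerv} from Theorem~\ref{thm:h1}(iii) together with Kontsevich's theorem relating the stable homology of $\mathfrak{a}_g$ to the rational cohomology of the mapping class groups (the version to be recalled as Theorem~\ref{thm:vanishing} below). Kontsevich's theorem, in the form we need, identifies the Lie algebra homology $H_\ast(\mathfrak{a}_g;\Q)$, or rather the $Sp$-invariant part $H_\ast(\mathfrak{a}_\infty;\Q)^{Sp}$, with a polynomial algebra generated by $\bigoplus_{g}H^\ast(\mathcal{M}_g;\Q)$ (suitably regraded), the contribution of a given $\mathcal{M}_g$ sitting in a specific total degree built from $4g-5$ and the number of ``legs''. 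The point of the argument is that $H_1(\mathfrak{a}_\infty;\Q)=0$ forces the vanishing of the single polynomial generator occupying the lowest total degree, and that generator is exactly $H^{4g-5}(\mathcal{M}_g;\Q)$ for the relevant $g$.

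First I would recall precisely the statement of Kontsevich's theorem and fix the bigrading: $\mathfrak{a}_g$ is graded by degree of derivation, and its Chevalley--Eilenberg complex carries an additional weight coming from the tensor degree; after taking $Sp$-invariants and passing to the stable limit $g\to\infty$, one gets a bigraded vector space whose $(0,\ast)$-part, or more precisely whose ``connected'' part in the lowest relevant bidegree, is spanned by $H^{4g-5}(\mathcal{M}_g;\Q)$ for each $g\ge 2$. I would then isolate the contribution to $H_1$. By the Loday--Quillen--Tsygan--type / Kontsevich decomposition, $H_\ast(\mathfrak{a}_\infty;\Q)$ is a free graded-commutative algebra on the "primitive" classes, so $H_1(\mathfrak{a}_\infty;\Q)$ is exactly the degree-$1$ part of the space of primitives; and among these primitives, a counting of the two gradings shows that the classes of homological degree $1$ are precisely those coming from $H^{4g-5}(\mathcal{M}_g;\Q)$ — these are the "lowest" cohomology classes of each $\mathcal{M}_g$ in Kontsevich's correspondence, and no other generator of the free algebra can land in total homological degree $1$.

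Next, I would invoke Theorem~\ref{thm:h1}(iii), which states $\lim_{g\to\infty}H_1(\mathfrak{a}_g)=0$; tensoring with $\Q$ (or noting the abelianizations in question are already rational in case (iii)) gives $H_1(\mathfrak{a}_\infty;\Q)=0$. Combining this with the identification of the previous paragraph yields $H^{4g-5}(\mathcal{M}_g;\Q)=0$ for all $g$ appearing in the stable range, i.e. for all $g\ge 2$ after we check the range is not an obstruction. Here one must be a little careful: Kontsevich's correspondence and the stabilization are set up so that $H^{4g-5}(\mathcal{M}_g;\Q)$ appears as an \emph{honest} summand of $H_1(\mathfrak{a}_\infty;\Q)$ for every fixed $g$, independently of any stable range, because the relevant bidegree is reached already at finite $g$ and the stabilization maps are injective on it; so the vanishing of the limit forces the vanishing at each finite stage. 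Finally, since $4g-5$ is the virtual cohomological dimension of $\mathcal{M}_g$ by Harer's theorem cited in \cite{harer}, this is exactly the asserted statement $H^{4g-5}(\mathcal{M}_g;\Q)=0$ for $g\ge 2$.

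The main obstacle, and the step requiring the most care, is the bookkeeping in the second paragraph: correctly matching the Lie-algebra homological degree and the graph-complex weight so as to confirm that $H^{4g-5}(\mathcal{M}_g;\Q)$ — and nothing else — contributes to $H_1$. This requires using the precise form of Kontsevich's theorem: that the "one-loop-at-a-time" filtration, or the counting of vertices and edges of the relevant ribbon graphs, pins the degree-$1$ primitive part of $H_\ast(\mathfrak{a}_\infty;\Q)$ to the top-dimensional cohomology of each moduli space. Once the grading conventions are aligned, the rest is formal: freeness of the homology algebra plus $H_1=0$ kills each top class. I would therefore devote most of the write-up to stating Theorem~\ref{thm:vanishing} with its bigrading explicitly and then reading off the degree-$1$ piece; the deduction itself is then a single line.
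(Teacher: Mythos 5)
There is a genuine gap in exactly the bookkeeping step you flag as the delicate one. Kontsevich's theorem in the form used here (Theorem \ref{thm:kontsevich}) reads
$$PH_k\big(\lim_{g\to\infty}\ag\big)_{2n}\cong\bigoplus_{2g-2+m=n,\ m>0}H^{2n-k}(\mathbf{M}_g^m;\Q)^{\mathfrak{S}_m},$$
and the restriction $m>0$ is essential: the associative (ribbon-graph) version of the correspondence only sees moduli of curves with at least one marked point. Consequently the degree-one primitives are \emph{not} the classes $H^{4g-5}(\mathcal{M}_g;\Q)$. Setting $k=1$ and $n=2g-2+m$ gives $2n-k=4g-5+2m$, so what the vanishing of $\lim_{g}H_1(\ag)$ actually yields is $H^{4g-5+2m}(\mathcal{M}_g^m;\Q)^{\mathfrak{S}_m}=0$ for all $m\ge 1$, and in particular $H^{4g-3}(\mathcal{M}_g^1;\Q)=0$. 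The closed-surface group $\mathcal{M}_g$ never occurs as a summand on the right-hand side, so your claim that $H^{4g-5}(\mathcal{M}_g;\Q)$ ``appears as an honest summand of $H_1(\mathfrak{a}_\infty;\Q)$'' fails, and no realignment of the bigrading will repair it.

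What is missing is the passage from $\mathcal{M}_g^1$ to $\mathcal{M}_g$. The paper does this with the extension $1\to\pi_1\Sg\to\mathcal{M}_g^1\to\mathcal{M}_g\to 1$ and its cohomology spectral sequence: by Harer's vcd theorem (Theorem \ref{thm:harer}) every term $E_2^{p,q}=H^p(\mathcal{M}_g;H^q(\Sg;\Q))$ with $p>4g-5$ vanishes, so in total degree $4g-3$ the only term that can survive is $E_2^{4g-5,2}\cong H^{4g-5}(\mathcal{M}_g;\Q)$; since the spectral sequence degenerates at $E_2$ by \cite{morita87}, this term is isomorphic to $H^{4g-3}(\mathcal{M}_g^1;\Q)=0$. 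You need this step (or some substitute for it) to conclude. The first half of your argument, once corrected so that the degree-one part of Kontsevich's correspondence is read off as the top cohomology of the \emph{marked} moduli spaces, does agree with the paper's.
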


See Theorem \ref{thm:vanishing} for details.
We have heard that Church, Farb and Putman have also proved
the above vanishing theorem in their recent work (see \cite{cfp}).

\begin{remark}
We can deduce from the latter case of the
theorem of Kontsevich mentioned above that Conjecture \ref{conj:h} is equivalent to the statement that the top rational cohomology group 
$H^{2n-3}(\Out F_n;\Q)$ vanishes for any $n\geq 2$ with respect to its virtual cohomological dimension which was determined by Culler and Vogtmann \cite{cuv}.
\end{remark}

{\it Acknowledgement} We would like to
thank John Harer for informing us about his vanishing 
theorem and also for pointing out a possible relation to
a recent work of Church, Farb and Putman mentioned above. 
Thanks are also due to the referees for helpful suggestions. 

The authors were partially supported by KAKENHI (No.~24740040 and 
No.~24740035), 
Japan Society for the Promotion of Science, 
Japan.

\section{Lie algebra and its homology}\label{sec:homology}

We begin by recalling a few basic facts from the theory of Lie algebras and 
their homology groups. 

\begin{definition}
A vector space $\mathfrak{g}$ over 
$\mathbb{Q}$, is called a {\it Lie algebra} 
if it has a $\mathbb{Q}$-bilinear map 
\[[\,\cdot\,,\,\cdot\,]: \mathfrak{g} \otimes \mathfrak{g} 
\longrightarrow \mathfrak{g},\]
which is called the {\it bracket} map, satisfying the following two conditions:
\begin{itemize}
\item (anti-symmetry) $[x,y]=-[y,x]$ holds for any $x,y \in \mathfrak{g}$; and 
\item (Jacobi identity) 
$[x,[y,z]]+[y,[z,x]]+[z,[x,y]]=0$ holds for any $x,y,z \in \mathfrak{g}$.
\end{itemize}
If we replace a vector space and $\mathbb{Q}$-bilinear map,
in the above definition, by an abelian group and $\mathbb{Z}$-bilinear map
respectively, then we obtain the concept of the Lie algebra over $\Z$.
\end{definition}

The image $[\mathfrak{g}, \mathfrak{g}]$ of the bracket map is 
an ideal of 
$\mathfrak{g}$. 

\begin{definition}\label{def:abelianization}
For a Lie algebra $\mathfrak{g}$, the quotient vector space 
\[H_1 (\mathfrak{g}):= \mathfrak{g}/[\mathfrak{g}, \mathfrak{g}]\]
considered as an abelian Lie algebra,
is called the {\it abelianization} of $\mathfrak{g}$. 
\end{definition}

As the notation $H_1 (\mathfrak{g})$ indicates, 
there is a general theory of (co)homology of Lie algebras
due to Chevalley and Eilenberg, and
the above can be interpreted as the first homology
group of $\mathfrak{g}$.

Now suppose that the Lie algebra $\mathfrak{g}$ is graded. That is, 
there exists a direct sum decomposition 
\[\mathfrak{g} = \bigoplus_{i \ge 0}^\infty \mathfrak{g}(i)\]
such that $[\mathfrak{g}(k), \mathfrak{g}(l)] \subset \mathfrak{g}(k+l)$ for 
any $k, l \ge 0$. 
Then the homology group $H_\ast (\mathfrak{g})$ 
becomes bigraded. In particular, the abelianization is decomposed as 
\[H_1 (\mathfrak{g}) \cong \bigoplus_{k \ge 0} H_1 (\mathfrak{g})_k\]
where 
\[H_1 (\mathfrak{g})_k= \mbox{the quotient of $\mathfrak{g}(k)$ by 
$\displaystyle\sum_{\begin{subarray}{c}
i+j =k\\ i,j \ge 0
\end{subarray}} [\mathfrak{g}(i), \mathfrak{g}(j)]$}\]
is called the {\it weight} $k$ part of $H_1 (\mathfrak{g})$. 

If we set
$
\mathfrak{g}^+=\bigoplus_{i \ge 1}^\infty \mathfrak{g}(i)
\ \subset\
\mathfrak{g},
$
then it becomes an ideal of $\mathfrak{g}$ and we have an extension 
\begin{equation}
0\longrightarrow
\mathfrak{g}^+ \longrightarrow \mathfrak{g}
\longrightarrow \mathfrak{g}(0) \longrightarrow 0
\label{eq:ex}
\end{equation}
of Lie algebras, where the last map denotes the natural projection. It is easy to see that the above extension necessarily splits so that $\mathfrak{g}$ is isomorphic to the
semi-direct product $\mathfrak{g}^+\rtimes \mathfrak{g}(0)$.
The abelianization of $\mathfrak{g}^+$
can be described by
\[H_1 (\mathfrak{g}^+)_1 = \mathfrak{g}(1), \quad 
H_1 (\mathfrak{g}^+)_k= \mbox{the quotient of $\mathfrak{g}(k)$ by 
$\displaystyle\sum_{\begin{subarray}{c}
i+j =k\\ i,j \ge 1
\end{subarray}} [\mathfrak{g}(i), \mathfrak{g}(j)]$}\]
for $k \ge 2$. It follows that
the computation of $H_1 (\mathfrak{g}^+)$ 
is equivalent to 
the determination of a generating set of $\mathfrak{g}^+$ as a Lie algebra.

Finally, in the case of the graded Lie algebra over $\Q$,
the relation between the abelianizations of 
$\mathfrak{g}^+$ and $\mathfrak{g}$ is given by the
following Hochschild-Serre exact sequence
(see \cite{hs}, here we use the homology version rather
than the original cohomology version)
$$
H_2(\mathfrak{g})\longrightarrow
H_2(\mathfrak{g}(0))\longrightarrow
H_1(\mathfrak{g}^+)_{\mathfrak{g}(0)}\longrightarrow
H_1(\mathfrak{g})\longrightarrow
H_1(\mathfrak{g}(0))\longrightarrow
0.
$$
Here $H_1(\mathfrak{g}^+)_{\mathfrak{g}(0)}$ denotes the space
of coinvariants of $H_1(\mathfrak{g}^+)$ with respect to the
action of $\mathfrak{g}(0)$ on it. Since the extension \eqref{eq:ex} splits, the homomorphism
$H_{i}(\mathfrak{g})\rightarrow H_{i}(\mathfrak{g}(0))$
is {\it surjective} for any $i$ so that we have
a short exact sequence
$$
0 \longrightarrow
H_1(\mathfrak{g}^+)_{\mathfrak{g}(0)}\longrightarrow
H_1(\mathfrak{g})\longrightarrow
H_1(\mathfrak{g}(0))\longrightarrow
0
$$
which splits canonically.

\section{Derivation Lie algebra of the free associative algebra}\label{sec:associative}

Let $H_n \cong \Z^n$ 
be a free abelian group of rank $n$ 
with a fixed ordered basis $\{ x_1, x_2,\ldots,x_n \}$. 
We suppose that $n \ge 2$. 
We write $H_n^\ast$ for the dual module $\Hom (H_n ,\Z)$. 
The dual basis of $H_n^\ast$ is denoted by 
$\{ x_1^\ast, x_2^\ast,\ldots,x_n^\ast \}$.

Let $T(H_n)= \displaystyle\bigoplus_{i=1}^\infty H_n^{\otimes i}$ 
denote the tensor algebra without constant terms generated by $H_n$. 
A {\it derivation} of $T(H_n)$ is an endomorphism $D$ of $T(H_n)$ 
satisfying
\begin{equation}\label{eq:derivation}
D(X \otimes Y) = D(X) \otimes Y + X \otimes D(Y)
\end{equation}
\noindent
for any $X,Y \in T(H_n)$. 
We denote the set of all derivations of $T(H_n)$ by $\Der (T(H_n))$, which 
has a natural structure of a module over $\Z$. Moreover we can endow 
$\Der (T(H_n))$ with a structure of a Lie algebra 
by restricting the bracket operation among endomorphisms of $T(H_n)$, 
namely 
\[
[F,G] = F \circ G - G \circ F
\]
for $F,G \in \Der (T(H_n))$. 

Note that a derivation is characterized by its action on 
the degree $1$ part $T(H_n)(1)=H_n$ as 
the definition (\ref{eq:derivation}) implies. Conversely, 
any homomorphism in $\Hom (H_n, T(H_n))$ defines a derivation of $T(H_n)$. 
Therefore 
we have a natural decomposition
\[\Der (T(H_n)) 
\cong \Hom (H_n, T(H_n)) \cong \bigoplus_{k \ge 0} 
\Der (T(H_n)) (k)\]
where
\[\Der (T(H_n)) (k) := \Hom(H_n, H_n^{\otimes (k+1)})=
H_n^\ast \otimes H_n^{\otimes (k+1)}\]
denotes the degree $k$ homogeneous part of $\Hom (H_n, T(H_n))$. 
Then for two elements 
\begin{align*}
F&=f \otimes u_1 \otimes u_2 \otimes \cdots \otimes u_{p+1} 
\in \Der (T(H_n))(p)=H_n^\ast \otimes H_n^{\otimes (p+1)}, \\
G&=g \otimes v_1 \otimes v_2 \otimes \cdots \otimes v_{q+1} 
\in \Der (T(H_n))(q)=H_n^\ast \otimes H_n^{\otimes (q+1)},
\end{align*}
\noindent
where $f, g \in H_n^\ast$ and $u_1,\ldots,u_{p+1}, 
v_1, \ldots, v_{q+1} \in H_n$, their bracket $[F,G] \in 
\Der (T(H_n))(p+q) =
H_n^\ast \otimes H_n^{\otimes (p+q+1)}$ is 
given by 
\begin{align}
[F,G] &= \sum_{s=1}^{q+1} f(v_s) \ 
g \otimes v_1 \otimes \cdots \otimes v_{s-1} \otimes 
(u_1 \otimes \cdots \otimes u_{p+1}) \otimes v_{s+1} 
\otimes \cdots \otimes v_{q+1} \label{eq:bracket}\\
&\quad -\sum_{t=1}^{p+1} g(u_t) \ 
f \otimes u_1 \otimes \cdots \otimes u_{t-1} \otimes 
(v_1 \otimes \cdots \otimes v_{q+1}) \otimes u_{t+1} 
\otimes \cdots \otimes u_{p+1}.\nonumber 
\end{align}
\noindent
Note that 
$\Der (T(H_n))(0) = \Hom(H_n,H_n) \cong \gl{Z}$, 
where $\gl{Z}$ is the Lie algebra of all $(n \times n)$-matrices with entries in $\Z$. 

Let 
\[\Der^+ (T(H_n))=\displaystyle\bigoplus_{k \ge 1} \Der (T(H_n))(k)\]
be the Lie subalgebra of $\Der (T(H_n))$ 
consisting of all elements of positive degrees. 
We now compute $H_1 (\Der^+ (T(H_n)))$ in a stable range with respect to $n$. 

In \cite[Section 6]{morita_GT}, the first author introduced 
for $n \ge 2$ the homomorphism 
\[C_{13}:\Der (T(H_n))(2) \longrightarrow H_n^{\otimes 2}\]
defined by 
\[C_{13}(f \otimes u_1 \otimes u_2 \otimes u_3) = f(u_2) u_1 \otimes u_3,\]
where $f \in H_n^\ast$ and $u_1, u_2, u_3 \in H_n$
and showed that the composition 
\[\wedge^2 \Der (T(H_n))(1) \xrightarrow{[\,\cdot\,,\,\cdot\,]} 
\Der (T(H_n))(2) \xrightarrow{C_{13}} H_n^{\otimes 2}\]
is trivial. Indeed, for $f, g \in H_n^\ast$ and $u_1,u_2,v_1,v_2 \in H_n$ 
we have 
\begin{align*}
[f \otimes u_1 \otimes u_2, g \otimes v_1 \otimes v_2]
&=f(v_1) g \otimes (u_1 \otimes u_2) \otimes v_2+
f(v_2) g \otimes v_1 \otimes (u_1 \otimes u_2) \\
& \quad -g(u_1) f \otimes (v_1 \otimes v_2) \otimes u_2 
-g(u_2) f \otimes u_1 \otimes (v_1 \otimes v_2) \\
& \stackrel{C_{13}}{\longmapsto}
f(v_1) g(u_2) u_1 \otimes v_2+
f(v_2) g(u_1) v_1 \otimes u_2 \\
& \qquad -g(u_1) f(v_2) v_1 \otimes u_2 
-g(u_2) f(v_1) u_1 \otimes v_2=0.
\end{align*}
\noindent
Since $C_{13}$ is clearly surjective, it induces an epimorphism  
$C_{13}:H_1 (\Der^+ (T(H_n)))_2 \twoheadrightarrow H_n^{\otimes 2}$. 

\begin{thm}\label{thm:associative}
$(1)$ For $n \ge 2$, we have a direct sum decomposition 
\[\Der (T(H_n))(2) = H_n^{\otimes 2} \oplus 
\big[\Der (T(H_n))(1),\Der (T(H_n))(1)\big].\]
In particular, the homomorphism $C_{13}: H_1(\Der^+ (T(H_n)))_2
\to H_n^{\otimes 2}$ 
is an isomorphism. 

\noindent
$(2)$ If $n \ge k \ge 3$, we have 
\begin{eqnarray*}
\hspace{5pt}\lefteqn{\Der (T(H_n))(k)}\hspace{15pt}\\
&=\big[\Der (T(H_n))(k-1),\Der (T(H_n))(1)\big] + 
\big[\Der (T(H_n))(k-2),\Der (T(H_n))(2)\big].
\end{eqnarray*}
\noindent
In particular, 
$H_1 (\Der^+ (T(H_n)))_k=0$ holds stably for any $k \ge 3$.
\end{thm}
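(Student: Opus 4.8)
\emph{Part (1).} Since it has already been verified that $C_{13}$ is surjective and annihilates $[\Der (T(H_n))(1),\Der (T(H_n))(1)]$, and since a surjection onto the free module $H_n^{\otimes 2}$ splits, the decomposition claimed in (1) is equivalent to the single inclusion $\Ker C_{13}\subseteq [\Der (T(H_n))(1),\Der (T(H_n))(1)]$; granting it, $\Der (T(H_n))(2)\cong H_n^{\otimes 2}\oplus [\Der (T(H_n))(1),\Der (T(H_n))(1)]$ and then $C_{13}$ is automatically an isomorphism on $H_1(\Der^+ (T(H_n)))_2$. To prove the inclusion I would describe $\Ker C_{13}$ as the $\Z$-span of the monomials $x_a^\ast\otimes x_p\otimes x_q\otimes x_r$ with $q\neq a$, together with the differences $x_a^\ast\otimes x_p\otimes x_a\otimes x_r-x_{a'}^\ast\otimes x_p\otimes x_{a'}\otimes x_r$, and realise each of these as a bracket of two degree-$1$ derivations via \eqref{eq:bracket}. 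The model computation is $[\,x_c^\ast\otimes x_p\otimes x_q,\ x_a^\ast\otimes x_c\otimes x_r\,]=x_a^\ast\otimes x_p\otimes x_q\otimes x_r+\delta_{cr}\,x_a^\ast\otimes x_c\otimes x_p\otimes x_q-\delta_{ap}\,x_c^\ast\otimes x_c\otimes x_r\otimes x_q$ (valid when $q\neq a$); choosing the auxiliary index $c$ distinct from the relevant ones among $p,q,r$ (this only needs $n\geq 2$) kills the two extra terms except in a few coincidence cases, which are cleared by applying the same identity a second time and combining. This is a finite, routine case check.

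\emph{Part (2).} It is enough to prove that every monomial $\xi=x_b^\ast\otimes z_1\otimes\cdots\otimes z_{k+1}$, $z_i\in\{x_1,\dots,x_n\}$, lies in $B_k:=[\Der (T(H_n))(k-1),\Der (T(H_n))(1)]+[\Der (T(H_n))(k-2),\Der (T(H_n))(2)]$; then $H_1(\Der^+ (T(H_n)))_k=0$ for $3\le k\le n$, which together with (1) yields Theorem \ref{thm:h1+}(i). The basic device is a ``window insertion'' bracket: for a basis index $a$ not occurring among the $z_i$ left after a chosen window $\{s,s+1\}$ ($s\in\{1,\dots,k\}$), \eqref{eq:bracket} gives $[\,x_a^\ast\otimes z_s\otimes z_{s+1},\ x_b^\ast\otimes z_1\otimes\cdots\otimes z_{s-1}\otimes x_a\otimes z_{s+2}\otimes\cdots\otimes z_{k+1}\,]=\xi-x_b^\ast(z_s)\,(\cdots)-x_b^\ast(z_{s+1})\,(\cdots)$, so $\xi\in B_k$ outright provided $b$ does not occur at a position inside $\{s,s+1\}$; such a window exists (for $k\geq 3$, using $n\geq k$ to find the index $a$) whenever the index $b$ occurs in $z_1,\dots,z_{k+1}$ at only ``few'' positions, in particular whenever $b$ occurs at most once. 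Setting $\mu(\xi)=\#\{i\colon z_i=x_b\}$, this settles all $\xi$ with $\mu(\xi)\le 1$ (the cases of a boundary occurrence being the plain ``append''/``prepend'' brackets).

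For $\mu(\xi)\geq 2$ I would induct on $\mu(\xi)$, the base being the above. The inductive step is a ``covector replacement'' bracket: for a fresh basis index $c$ and a position $s\le k$ with $z_s=x_b$, \eqref{eq:bracket} gives $[\,x_b^\ast\otimes x_c\otimes z_{k+1},\ x_c^\ast\otimes z_1\otimes\cdots\otimes z_k\,]=-\xi+\sum_{s\colon z_s=x_b,\ s\le k}x_c^\ast\otimes z_1\otimes\cdots\otimes z_{s-1}\otimes x_c\otimes z_{k+1}\otimes z_{s+1}\otimes\cdots\otimes z_k$, whose summands carry the covector $x_c^\ast$ with $c$ absent from the remaining factors, hence have multiplicity $1$ and are already disposed of; so $\xi$ lies in $[\Der (T(H_n))(k-1),\Der (T(H_n))(1)]$ modulo already-treated monomials. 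The delicate point — and the reason the degree-$2$ summand $[\Der (T(H_n))(k-2),\Der (T(H_n))(2)]$ is genuinely needed, not just the degree-$1$ one — is the borderline range where $n$ is essentially equal to $k$: then there may be no basis index $c$ fresh with respect to all of $z_1,\dots,z_{k+1}$, so the covector replacement above fails, and one must exploit the extra flexibility of the degree-$2$ brackets (a longer window, and/or combining a degree-$1$ with a degree-$2$ relation so as to pin down $\xi$ over $\Z$ and not merely over $\Q$). I expect exactly this $n\simeq k$ bookkeeping — making the window length and the auxiliary index uniform across the multiplicity strata so that the induction closes integrally — to be the main obstacle; once $n$ is comfortably larger than $k$ the proof is a clean iteration of \eqref{eq:bracket} of the two types above, and the conclusion $H_1(\Der^+ (T(H_n)))_k=0$ is stable in $n$ for every $k\geq 3$.
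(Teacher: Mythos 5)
Part (1) of your proposal is in substance the paper's own argument: fix a section of $C_{13}$ (the paper takes $s(x_i\otimes x_j)=x_1^*\otimes x_i\otimes x_1\otimes x_j$) and then exhibit a finite list of degree-$1$ brackets generating $\Ker C_{13}$, with the coincidence cases (covector index hitting $u_1$, $u_3$, or equal to the middle index) cleared by adding a second bracket, exactly as you predict. That part is sound and needs only $n\ge 2$; the case check you defer is the one the paper writes out.

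The genuine gap is in part (2), precisely where you flag it: the borderline $n=k$. Your inductive (``covector replacement'') step requires an index $c$ fresh with respect to all of $z_1,\dots,z_{k+1}$. When $\mu(\xi)=2$ and the other $k-1$ factors are pairwise distinct and different from $x_b$, the factors already exhaust $k$ distinct indices, so such a $c$ exists only for $n\ge k+1$; at $n=k$ the step is unavailable and you offer no replacement, only the expectation that degree-$2$ brackets will save it. Since the theorem asserts the range $n\ge k\ge 3$, the proposal as written proves a strictly weaker statement (enough for the stable corollaries, but not for the theorem). The paper avoids the problem by inverting your economy of indices: instead of refreshing the covector (which costs freshness against all $k+1$ factors), it brackets a general monomial with $x_m^*\otimes x_{i_k}\otimes x_{i_{k+1}}$ where $m$ need only avoid $i_1,\dots,i_{k-1}$ --- $k-1$ indices, so $n\ge k$ suffices --- at the price of two correction terms ($\delta_{l,i_k}$ and $\delta_{l,i_{k+1}}$) in which the new covector index $m$ occurs in at most two controlled slots. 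Those are then disposed of by the multiplicity-one analysis together with an explicit ``slide $x_l$ by two slots'' identity
\[
[\,x_l^*\otimes x_{j_{p+1}}\otimes\cdots\otimes x_{j_q},\ x_l^*\otimes x_{j_1}\otimes\cdots\otimes x_{j_p}\otimes x_l\otimes x_l\otimes x_{j_{q+1}}\otimes\cdots\otimes x_{j_k}\,],
\]
and it is this slide, not a shortage of fresh indices in the replacement step, that actually consumes the $\big[\Der (T(H_n))(k-2),\Der (T(H_n))(1+1)\big]$-type summand. To complete your route you would need to supply an analogous mechanism for the $n=k$, $\mu=2$ stratum; without it the proof does not reach the stated hypothesis.
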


\begin{remark}\label{rem:diagram}
The formula (\ref{eq:bracket}) for the bracket operation in 
$\Der (T(H_n))$ looks slightly complicated. However, by using the following 
diagrammatic description, we can make it clear and intuitive. 
Generators of 
$\Der (T(H_n))(k)=H_n^\ast \otimes H_n^{\otimes (k+1)}$ are written in the form 
\[x_l^\ast \otimes x_{i_1} \otimes x_{i_2} \otimes \cdots \otimes x_{i_{k+1}}\]
by using our basis. We associate to such a vector the diagram  as in 
Figure \ref{fig:ika}:

\begin{figure}[htbp]
\begin{center}
\includegraphics[width=0.25\textwidth]{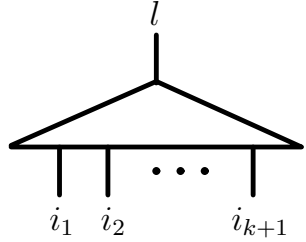}
\end{center}
\caption{The diagram for the vector 
$x_l^\ast \otimes x_{i_1} \otimes x_{i_2} \otimes \cdots \otimes x_{i_{k+1}}$}
\label{fig:ika}
\end{figure}




\noindent
Then the formula is diagrammatically written 
as in Figure \ref{fig:bracket_An}, where we replace 
the diagrams in the right hand side under the rule 
shown in Figure \ref{fig:bracket_An_2}.

\begin{figure}[htbp]
\begin{center}
\includegraphics[width=0.55\textwidth]{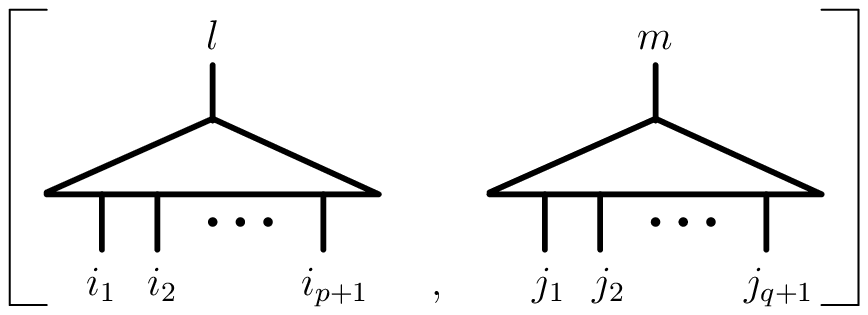}

\bigskip
\includegraphics[width=0.65\textwidth]{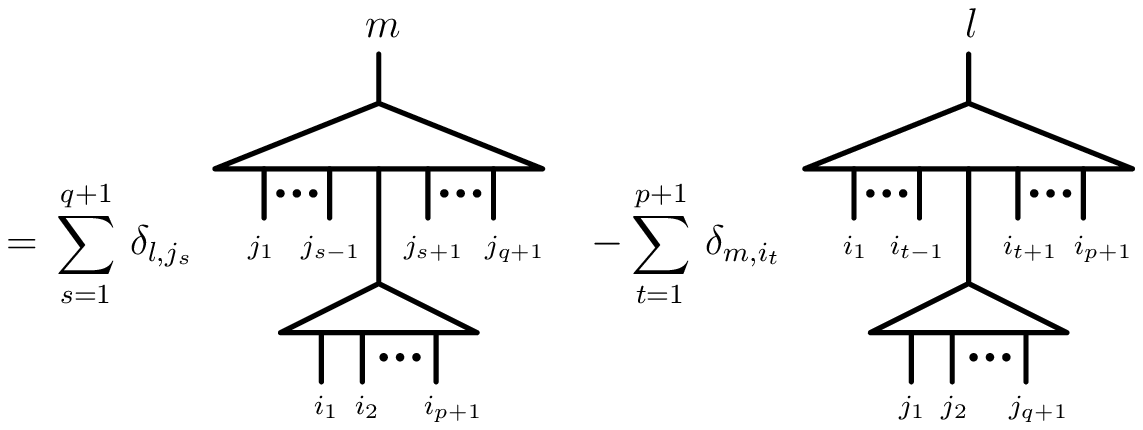}
\end{center}
\caption{Diagrammatic description of the bracket operation}
\label{fig:bracket_An}
\end{figure}

\begin{figure}[htbp]
\begin{center}
\includegraphics[width=0.55\textwidth]{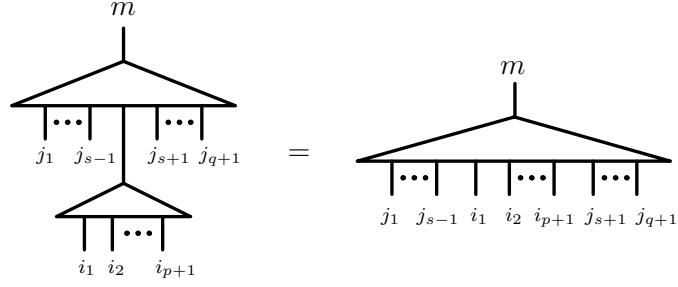}
\end{center}
\caption{Replace the diagram in Figure \ref{fig:bracket_An} 
(similarly for the second one)}
\label{fig:bracket_An_2}
\end{figure}
\end{remark}

\begin{proof}[Proof of Theorem $\ref{thm:associative}$]
%

(1) Define a section 
\[
s : H_n^{\otimes 2} \to \Der (T(H_n))(2)
\] 
of $C_{13}$ by 
$s(x_i \otimes x_j) = x_1^* \otimes x_i \otimes x_1 \otimes x_j$. 
Since $C_{13}\left(\big[\Der (T(H_n))(1),\Der (T(H_n))(1)\big]\right)=
\{0\}$ as already mentioned, 
we have 
\[\big[\Der (T(H_n))(1),\Der (T(H_n))(1)\big]
\cap 
s (H_n^{\otimes 2})
= \{ 0 \}.
\]

The image of the bracket map contains the following types of elements. 

\begin{itemize}
\item 
$x_l^\ast \otimes x_{i_1} \otimes x_{i_2} \otimes x_{i_3} 
= [ x_l^\ast \otimes x_{i_1} \otimes x_{i_2}, x_l^\ast \otimes x_l \otimes x_{i_3}] 
\quad (l \neq i_1, i_2, i_3)$. 

\item 
$x_l^\ast \otimes x_l \otimes x_{i_1} \otimes x_{i_2} 
= [ x_{i_1}^\ast \otimes x_{i_1} \otimes x_{i_2}, x_l^\ast \otimes x_l \otimes x_{i_1}] 
\quad (l \neq i_1, i_2)$. 

\item 
$x_l^\ast \otimes x_{i_1} \otimes x_{i_2} \otimes x_l 
= [ x_{i_1}^\ast \otimes x_{i_1} \otimes x_{i_2}, x_l^\ast \otimes x_{i_1} \otimes x_l] 
\quad (l \neq i_1, i_2)$. 

\item 
$x_l^\ast \otimes x_l \otimes x_{i_1} \otimes x_l 
= [ x_l^\ast \otimes x_l \otimes x_{i_1}, x_l^\ast \otimes x_l \otimes x_l] 
\quad (l \neq i_1)$.
\end{itemize}
Moreover, we have for $l \ne 1$ 
\begin{itemize}
\item 
$x_{l}^\ast \otimes x_{i_1} \otimes x_{l} \otimes x_{i_2} = 
x_1^\ast \otimes x_{i_1} \otimes x_1 \otimes x_{i_2} - 
[ x_{l}^\ast \otimes x_1 \otimes x_{i_2}, x_1^\ast \otimes x_{i_1} \otimes x_{l}]$ 
\ \ $(l \neq i_1, i_2 \neq 1)$, \\
$x_{l}^\ast \otimes x_{i_1} \otimes x_{l} \otimes x_{i_2} = 
x_1^\ast \otimes x_{i_1} \otimes x_1 \otimes x_{i_2} - 
[ x_{l}^\ast \otimes x_{i_1} \otimes x_1, x_1^\ast \otimes x_{l} \otimes x_{i_2}]$ 
\ \ $(l \neq i_2, i_1 \neq 1)$, \\
$x_{l}^\ast \otimes x_{l} \otimes x_{l} \otimes x_{l} = 
x_1^\ast \otimes x_{l} \otimes x_1 \otimes x_{l} - 
[ x_{l}^\ast \otimes x_{l} \otimes x_1, x_1^\ast \otimes x_{l} \otimes x_{l}]$ \\
\qquad \qquad \qquad \qquad \qquad 
$+ [ x_{l}^\ast \otimes x_{l} \otimes x_{l}, x_1^\ast \otimes x_{l} \otimes x_1]$, \\
$x_{l}^\ast \otimes x_1 \otimes x_{l} \otimes x_1 = 
 x_1^\ast \otimes x_1 \otimes x_1 \otimes x_1 - 
[ x_1^\ast \otimes x_1 \otimes x_1, x_{l}^\ast \otimes x_1 \otimes x_{l}]$ \\ 
\qquad \qquad \qquad \qquad \qquad 
$+ [ x_1^\ast \otimes x_1 \otimes x_{l}, x_{l}^\ast \otimes x_1 \otimes x_1]$. 
\end{itemize}

Since the above elements and $s (H_n^{\otimes 2})$ 
generate $\Der (T(H_n))(2)$, 
the claim (1) holds.  




(2) We now exhibit an algorithm to rewrite 
a generator 
$x_l^\ast \otimes x_{i_1} \otimes x_{i_2} \otimes \cdots \otimes x_{i_{k+1}}$ 
of $\Der (T(H_n))(k)$ as an element in 
$\big[\Der (T(H_n))(k-1),\Der (T(H_n))(1)\big] + 
\big[\Der (T(H_n))(k-2),\Der (T(H_n))(2)\big]$. 

(Case 1) When $l \neq i_1, i_2, \ldots, i_{k+1}$, we have an 
equality 
\[
x_l^\ast \otimes x_{i_1} \otimes x_{i_2} \otimes \cdots \otimes x_{i_{k+1}}
=[x_l^\ast \otimes x_{i_k} \otimes x_{i_{k+1}}, \ 
x_l^\ast \otimes x_{i_1} \otimes x_{i_2} \otimes \cdots \otimes x_{i_{k-1}} 
\otimes x_l]
\]
as depicted in Figure \ref{fig:geso12} 
and we have done. 
\begin{figure}[htbp]
\begin{center}
\begin{picture}(420,100)
\put(0,20){\includegraphics[scale=0.6]{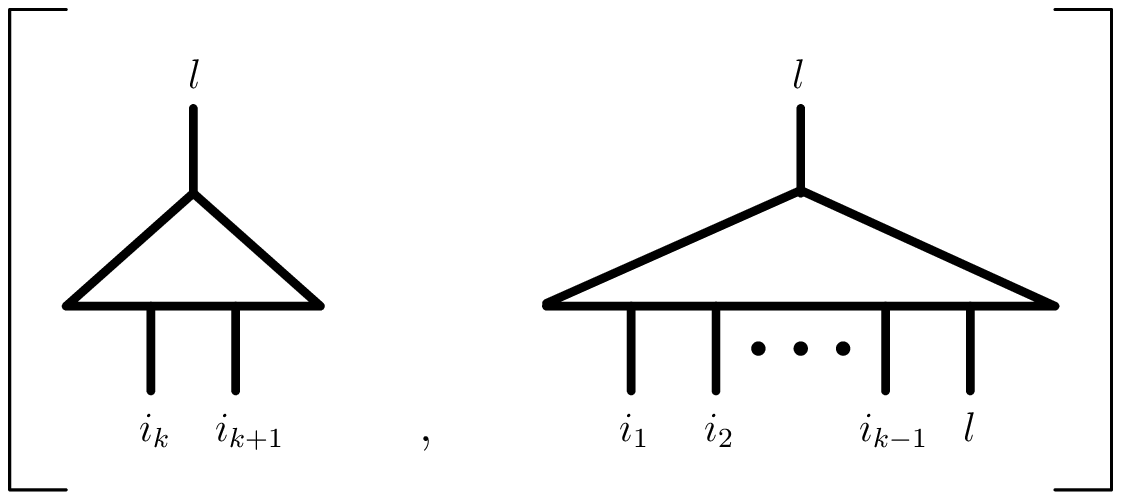}}
\put(200,0){\includegraphics[scale=0.6]{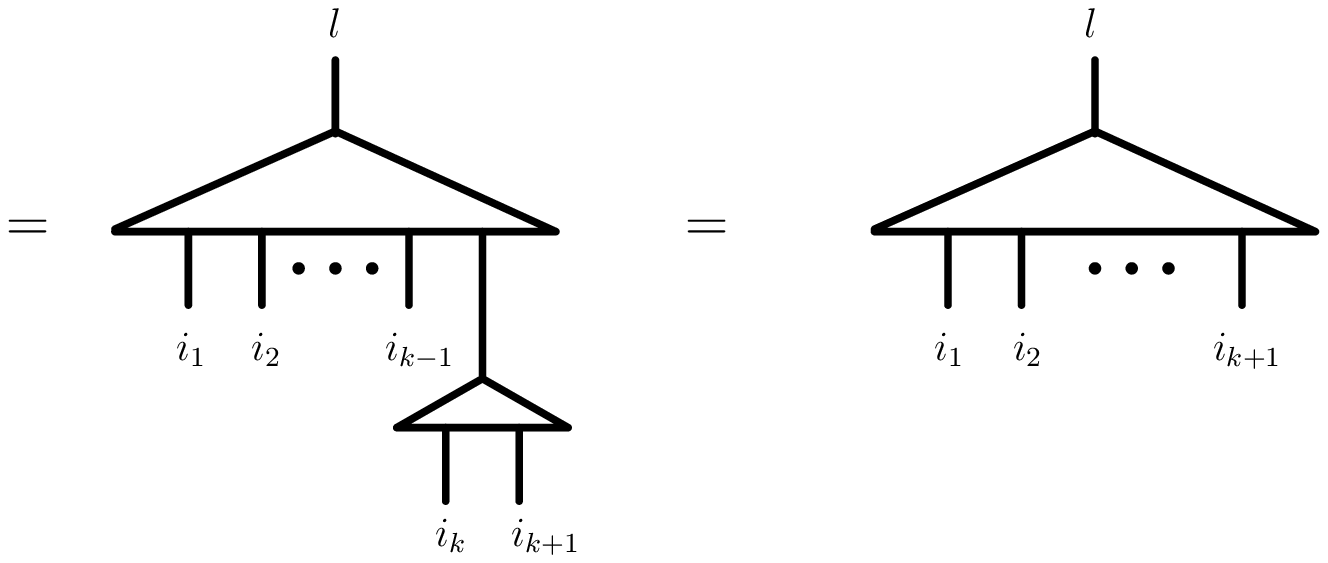}}
\end{picture}
\end{center}
\caption{Case $1$ of Proof of Theorem \ref{thm:associative}(2)}
\label{fig:geso12}
\end{figure}

(Case 2) Suppose that $l$ coincides with 
only one of $i_1, i_2,\ldots,i_{k+1}$ (say $l=i_j$). 
We rename $\{i_1, \ldots, i_{j-1}, i_{j+1},\ldots, i_{k+1}\}$ 
by $\{j_1, j_2, \ldots, j_k\}$ so that $j_p \neq l$ for $1 \le p \le k$. 
By assumption, we have $k \ge 3$. 

The equality 
\[[x_{j_1}^\ast \otimes x_{j_1} \otimes x_{j_2} \otimes \cdots \otimes 
x_{j_k}, \ 
x_l^\ast \otimes x_l \otimes x_{j_1} ]
=x_l^\ast \otimes x_l \otimes x_{j_1} \otimes x_{j_2} 
\otimes \cdots \otimes x_{j_k}\]
shows that the right hand side is in 
$\big[\Der (T(H_n))(k-1),\Der (T(H_n))(1)\big]$. 
Now we ``slide'' $x_l$ to any other slot as follows. 

If $q-p \ge 2$, then 
$x_l^\ast \otimes x_{j_{p+1}} \otimes x_{j_{p+2}} \otimes 
\cdots \otimes x_{j_q} \in \Der^+(T(H_n))$ and 
we have 
\begin{align*}
&[x_l^\ast \otimes x_{j_{p+1}} \otimes x_{j_{p+2}} \otimes 
\cdots \otimes x_{j_q}, \ 
x_l^\ast \otimes x_{j_1} \otimes \cdots \otimes 
x_{j_p} \otimes x_l \otimes x_l \otimes x_{j_{q+1}} \otimes 
\cdots \otimes x_{j_k}]\\
= \ & x_l^\ast \otimes x_{j_1} \otimes \cdots x_{j_q} 
\otimes x_l \otimes x_{j_{q+1}} \otimes \cdots \otimes x_{j_k}
+
x_l^\ast \otimes x_{j_1} \otimes \cdots \otimes 
x_{j_p} \otimes x_l \otimes x_{j_{p+1}} \otimes 
\cdots \otimes x_{j_k},
\end{align*}
which implies that modulo brackets and up to sign, 
we can slide $x_l$ to the right by at least two slots, 
as depicted in Figure \ref{fig:geso45}. 
\begin{figure}[htbp]
\begin{center}
\begin{picture}(420,90)
\put(0,10){\includegraphics[scale=0.6]{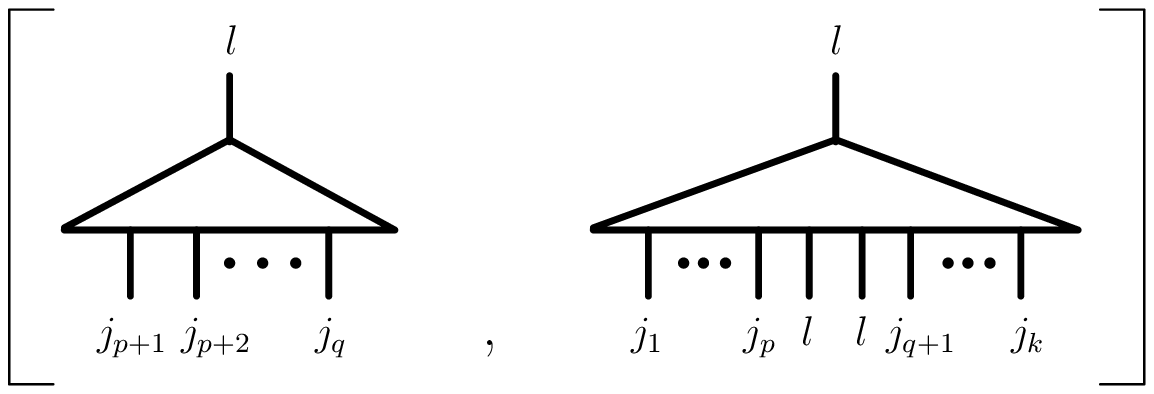}}
\put(205,0){\includegraphics[scale=0.6]{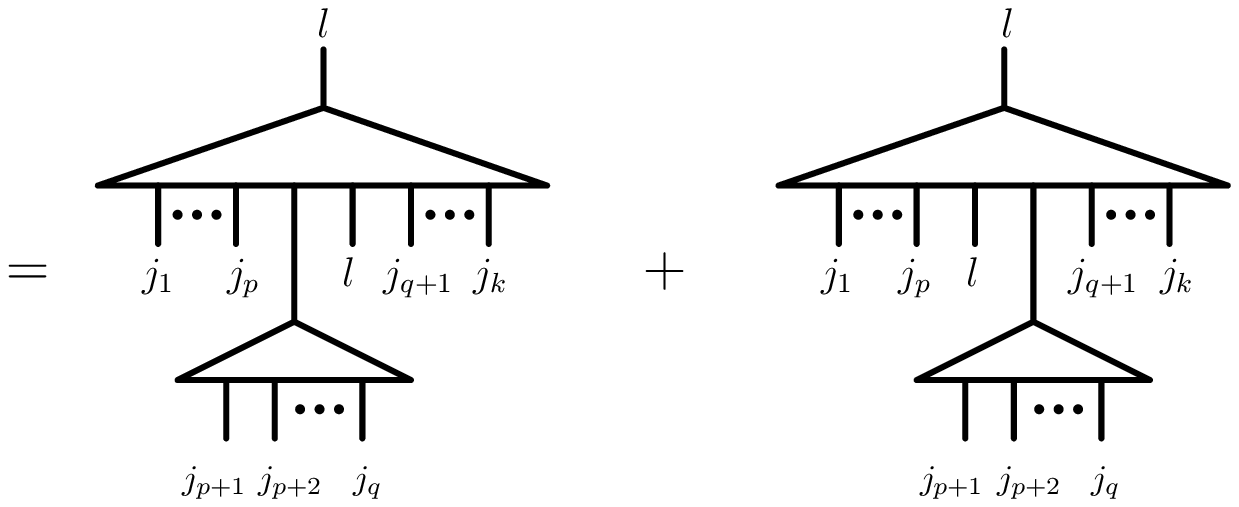}}
\end{picture}
\end{center}
\caption{Slide $x_l$}
\label{fig:geso45}
\end{figure}

\noindent
By applying this observation to 
$x_l^\ast \otimes x_l \otimes x_{j_1} \otimes x_{j_2} 
\otimes \cdots \otimes x_{j_k} \in 
\big[\Der (T(H_n))(k-1),\Der (T(H_n))(1)\big]$, we see that 
\begin{align*}
x_l^\ast \otimes x_{j_1} \otimes x_l \otimes 
x_{j_2} \otimes x_{j_3} 
\otimes \cdots \otimes x_{j_k}&\equiv 
x_l^\ast \otimes x_{j_1} \otimes x_{j_2} \otimes x_{j_3} 
\otimes x_l \otimes x_{j_4} \otimes \cdots \otimes x_{j_k}\\
&\equiv 
x_l^\ast \otimes x_l \otimes x_{j_1} \otimes x_{j_2} 
\otimes \cdots \otimes x_{j_k} \\
&\equiv 0
\end{align*}
\noindent
modulo $\big[\Der (T(H_n))(k-1),\Der (T(H_n))(1)\big] +
\big[\Der (T(H_n))(k-2),\Der (T(H_n))(2)\big]$. 
Starting from 
$x_l^\ast \otimes x_l \otimes x_{j_1} \otimes x_{j_2} 
\otimes \cdots \otimes x_{j_k}$ and 
$x_l^\ast \otimes x_{j_1} \otimes x_l \otimes 
x_{j_2} \otimes x_{j_3} 
\otimes \cdots \otimes x_{j_k}$, we can slide $x_l$ to 
any other slot modulo $\big[\Der (T(H_n))(k-2),\Der (T(H_n))(2)\big]$. 
Hence Case 2 is done.

(Case 3) Here we consider the general case. 
For $x_l^\ast \otimes x_{i_1} \otimes x_{i_2} \otimes \cdots 
\otimes x_{i_{k+1}}$, we take 
\[m \in \{1,2,\ldots,n \} - \{i_1, i_2, \ldots, i_{k-1}\}\]
where $\{1,2,\ldots,n \} - \{i_1, i_2, \ldots, i_{k-1}\}\neq \emptyset$ by 
the assumption that $n \ge k$. Then we 
have 
\begin{align*}
x_l^\ast \otimes x_{i_1} \otimes x_{i_2} \otimes \cdots 
\otimes x_{i_{k+1}} &=
[x_m^\ast \otimes x_{i_k} \otimes x_{i_{k+1}}, \ 
x_l^\ast \otimes x_{i_1} \otimes x_{i_2} \otimes \cdots 
\otimes x_{i_{k-1}} \otimes x_m]\\
&\quad +\delta_{l, i_k} 
x_m^\ast \otimes x_{i_1} \otimes x_{i_2} \otimes \cdots 
\otimes x_{i_{k-1}} \otimes x_m \otimes x_{i_{k+1}}\\
&\quad +\delta_{l, i_{k+1}} 
x_m^\ast \otimes x_{i_k} \otimes x_{i_1} \otimes x_{i_2} \otimes \cdots 
\otimes x_{i_{k-1}} \otimes x_m 
\end{align*}
as depicted in Figure \ref{fig:geso67}. 
\begin{figure}[htbp]
\begin{center}
\begin{picture}(410,100)
\put(0,10){\includegraphics[scale=0.6]{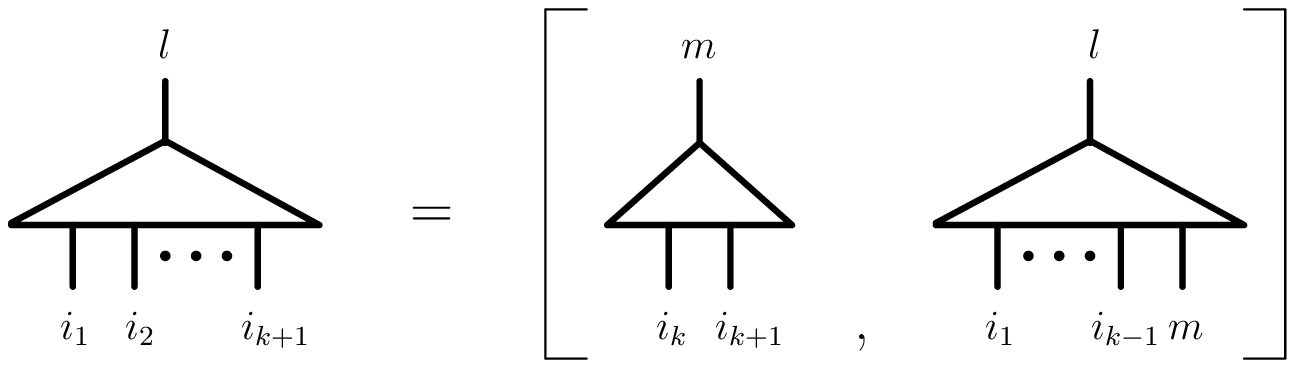}}
\put(230,0){\includegraphics[scale=0.6]{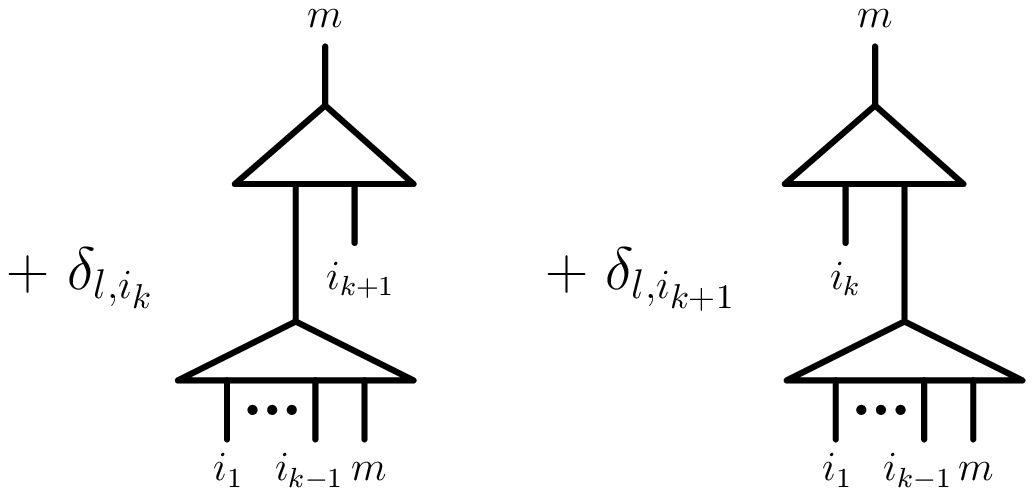}}
\end{picture}
\end{center}
\caption{Case $3$ of Proof of Theorem \ref{thm:associative}(2)}
\label{fig:geso67}
\end{figure}

\noindent
If $m \neq i_{k+1}$, the second term of the right hand side is reduced to Case 2. 
Otherwise, we consider the equality
\begin{align*}
&[x_{i_1}^\ast \otimes x_{i_1} \otimes x_{i_2} \cdots \otimes x_{i_{k-1}}, \ 
x_m^\ast \otimes x_{i_1} \otimes x_m \otimes x_m ]\\
=\ &
x_m^\ast \otimes x_{i_1} \otimes x_{i_2} \otimes \cdots 
\otimes x_{i_{k-1}} \otimes x_m \otimes x_m.
\end{align*}
Then this term belongs to $\big[\Der (T(H_n))(k-2),\Der (T(H_n))(2)\big]$. 
Similarly, 
if $m \neq i_k$, the third term has already been considered in Case 2. 
In the other case $m = i_k$, we have 
\begin{align*}
&[x_{i_1}^\ast \otimes x_{i_1} \otimes x_{i_2} \cdots \otimes x_{i_{k-1}}, \ 
x_m^\ast \otimes x_m \otimes x_{i_1} \otimes x_m ]\\
=\ &
x_m^\ast \otimes x_m \otimes x_{i_1} \otimes x_{i_2} \otimes \cdots 
\otimes x_{i_{k-1}} \otimes x_m.
\end{align*}
Therefore this term also belongs to $\big[\Der (T(H_n))(k-2),\Der (T(H_n))(2)\big]$. 
This completes the proof. 
\end{proof}

Now we prove Theorem \ref{thm:h1} (i). 
More precisely we show the following. 
\begin{cor}\label{cor:associative}
$(1)$ For any $n \ge 2$, The natural pairing $H_n^\ast \otimes H_n \to \Z$ 
induces an isomorphism 
$H_1 (\Der (T(H_n)))_0 \cong \Z$. 

\noindent
$(2)$ For any $n \ge 2$, we have 
$H_1 (\Der (T(H_n)))_1=0$. 

\noindent
$(3)$ If $n \ge k \ge 2$, we have $H_1 (\Der (T(H_n)))_k=0$. 
\end{cor}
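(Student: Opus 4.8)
The plan is to treat the three weight ranges separately, combining the weight decomposition $H_1(\mathfrak{g})_k=\mathfrak{g}(k)\big/\sum_{i+j=k,\,i,j\ge0}[\mathfrak{g}(i),\mathfrak{g}(j)]$, where we abbreviate $\mathfrak{g}=\Der(T(H_n))$, with the structural results of Theorem~\ref{thm:associative}. For weight $0$ only the summand $[\mathfrak{g}(0),\mathfrak{g}(0)]$ occurs, and since $\mathfrak{g}(0)=\gl{Z}$ with $n\ge2$ one has $[\gl{Z},\gl{Z}]=\mathfrak{sl}(n,\Z)$, spanned over $\Z$ by the off-diagonal matrix units $E_{ij}$ and the differences $E_{ii}-E_{jj}$ (use $[E_{ii},E_{ij}]=E_{ij}$ and $[E_{ij},E_{ji}]=E_{ii}-E_{jj}$, and that a commutator has zero trace); hence the quotient is infinite cyclic and the trace furnishes the isomorphism $H_1(\mathfrak{g})_0\cong\Z$, the trace being precisely the homomorphism induced by the natural pairing $H_n^\ast\otimes H_n\to\Z$. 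This proves (1).

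For (2) we must show that $H_1(\mathfrak{g})_1=\mathfrak{g}(1)\big/[\mathfrak{g}(0),\mathfrak{g}(1)]$ vanishes, i.e.\ that every basis vector $x_l^\ast\otimes x_i\otimes x_j$ of $\mathfrak{g}(1)=H_n^\ast\otimes H_n^{\otimes2}$ lies in $[\mathfrak{g}(0),\mathfrak{g}(1)]$. Using the bracket formula \eqref{eq:bracket} one verifies, for an auxiliary index $a$, that $[x_a^\ast\otimes x_i,\ x_l^\ast\otimes x_a\otimes x_j]=x_l^\ast\otimes x_i\otimes x_j$ whenever $i\ne l$ and $a\ne j$, and that $[x_a^\ast\otimes x_j,\ x_l^\ast\otimes x_l\otimes x_a]=x_l^\ast\otimes x_l\otimes x_j$ whenever $a\ne l$ and $j\ne l$; the single remaining vector $x_l^\ast\otimes x_l\otimes x_l$ comes from $[x_a^\ast\otimes x_l,\ x_l^\ast\otimes x_a\otimes x_l]=x_l^\ast\otimes x_l\otimes x_l-x_a^\ast\otimes x_a\otimes x_l$ with $a\ne l$, the last term already belonging to $[\mathfrak{g}(0),\mathfrak{g}(1)]$ by the second identity (with $a$ in the role of $l$ and $l$ in the role of $j$). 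Since $n\ge2$ a suitable auxiliary index always exists, and as every coefficient appearing is $\pm1$ the argument is valid over $\Z$; this gives (2).

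For (3) with $k\ge3$ the image of $[\mathfrak{g}(0),\mathfrak{g}(k)]$ only adds to the relations defining $H_1(\mathfrak{g}^+)_k$, so $H_1(\mathfrak{g})_k$ is a quotient of $H_1(\mathfrak{g}^+)_k$, which vanishes for $n\ge k$ by Theorem~\ref{thm:associative}(2); nothing further is needed. For $k=2$, Theorem~\ref{thm:associative}(1) says that $C_{13}$ induces an isomorphism $H_1(\mathfrak{g}^+)_2\cong H_n^{\otimes2}$, hence $H_1(\mathfrak{g})_2\cong H_n^{\otimes2}\big/C_{13}\!\big([\mathfrak{g}(0),\mathfrak{g}(2)]\big)$, and it remains to see that $C_{13}$ carries $[\mathfrak{g}(0),\mathfrak{g}(2)]$ onto all of $H_n^{\otimes2}$. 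A direct computation from \eqref{eq:bracket} and the definition of $C_{13}$ gives $C_{13}\!\big([x_a^\ast\otimes x_i,\ x_l^\ast\otimes x_a\otimes x_l\otimes x_j]\big)=x_i\otimes x_j+\delta_{aj}\,x_a\otimes x_i$, so choosing any $a\ne j$ (possible since $n\ge2$) yields $x_i\otimes x_j$ for every $i,j$. Thus $H_1(\mathfrak{g})_2=0$, which completes (3); Theorem~\ref{thm:h1}(i) then follows by passing to the direct limit, with the trace class as generator.

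I expect the only genuine labour to be the bookkeeping of the Kronecker deltas in the bracket identities of (2) and in the $k=2$ case of (3): one must check that the unwanted terms can always be annihilated by a fresh choice of the auxiliary index $a$ under the standing hypothesis (here one only needs $n\ge2$). This is purely mechanical once \eqref{eq:bracket} is available; the single non-formal ingredient, namely the determination of $H_1(\Der^+(T(H_n)))$, is already supplied by Theorem~\ref{thm:associative}, so no further conceptual input is required.
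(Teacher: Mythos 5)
Your proposal is correct and follows essentially the same route as the paper: the trace on $\mathfrak{gl}(n,\Z)$ for weight $0$, reduction of weights $k\ge 3$ to Theorem~\ref{thm:associative}(2), and for $k=2$ the surjectivity of $C_{13}$ on $[\Der(T(H_n))(0),\Der(T(H_n))(2)]$ combined with Theorem~\ref{thm:associative}(1). The only (cosmetic) divergence is in weight $1$, where the paper invokes the identity derivation $I\in\Der(T(H_n))(0)$ with $[I,D]=kD$ (so $[I,D]=D$ for $k=1$, valid over $\Z$), whereas you verify the same conclusion by explicit basis-by-basis brackets; both are valid integrally.
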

\begin{proof}
(1) The above pairing corresponds to the usual trace map 
\[
H_n^* \otimes H_n = \mathfrak{gl}(n , {\mathbb Z})  
\longrightarrow {\mathbb Z}.
\]
It can be easily checked that any traceless matrix is 
in $\Im [ \cdot, \cdot ]$ over ${\mathbb Z}$. 

(2) If we apply the argument in Remark \ref{rmk:B} below 
to the case $k=1$, we can conclude that any element of degree $1$ 
is contained in $\Im [ \cdot , \cdot ]$. 

(3) By Theorem \ref{thm:associative} (1), 
it suffices to show that the composition 
\[
\Der (T(H_n))(0) \otimes \Der (T(H_n))(2) 
\xrightarrow{[\cdot,\cdot]}
\Der (T(H_n))(2) 
\xrightarrow{C_{13}}
H_n^{\otimes 2}
\]
is surjective. 
For $i,j \in \{ 1,2, \ldots,n \}$ with $i \ne j$, we have 
\begin{align*}
C_{13} ( [ x_j^* \otimes x_i, x_j^* \otimes x_j \otimes x_j \otimes x_i ]) 
&= x_i \otimes  x_i , \\
C_{13} ( [ x_j^* \otimes x_j, x_i^* \otimes x_i \otimes x_i \otimes x_j ]) 
&= x_i \otimes  x_j .
\end{align*}
This completes the proof.

\end{proof}

\begin{remark}\label{rmk:B}
One of the referees kindly points out that, over the rationals, 
the abelianization of $\Der (T(H_n) \otimes {\mathbb Q})$ 
can be determined easily as 
\[
H_1 (\Der (T(H_n)) \otimes {\mathbb Q}) \cong \Q 
\]
for {\it all} $n \geq 2$ by the following argument. 
The identity map $I$ belongs to 
$\Der (T(H_n) \otimes \Q) (0) \cong \mathfrak{gl}(n,\Q)$
and for any $D \in \Der (T(H_n) \otimes \Q) (k)$ \, ($k \geq 1$), 
we have 
\[
[I , D] = I \circ D - D \circ I = (k + 1) D - D = k D. 
\]
He or she also points out that the same argument can be applied to the case of 
$\Der(\Ln) \otimes \Q$, treated in the next section, as well.

\end{remark}

\section{Derivation Lie algebra of the free Lie algebra}\label{sec:lie}

Let $\Ln$ denote the free Lie algebra generated by $H_n$.
This Lie algebra is naturally graded and we have a direct sum decomposition 
$\Ln= \displaystyle\bigoplus_{i=1}^\infty \Ln(i)$.  For small degree $i$, 
the module $\Ln(i)$ is given by 
\[\Ln(1)=H_n, \quad \Ln(2) \cong \wedge^2 H_n, 
\quad \Ln(3) \cong (H_n \otimes (\wedge^2 H_n))/\wedge^3 H_n, \quad 
\ldots\]
where $\wedge^2 H_n$ and $\wedge^3 H_n$ correspond to 
the anti-symmetry and the Jacobi identity of the bracket operation of $\Ln$.

A {\it derivation} of $\Ln$ is an endomorphism $D$ of $\Ln$ 
satisfying
\[D([X,Y]) = [D(X), Y] + [X, D(Y)]\]
for any $X,Y \in \Ln$. 
We denote by $\Der(\Ln)$ the set of all derivations of $\Ln$. 
By an argument similar to the case of $\Der(T(H_n))$, 
we have a natural decomposition 
\[\Der (\Ln) 
\cong \Hom (H_n, \Ln) \cong \bigoplus_{k \ge 0} \Der(\Ln)(k)\]
where
\[\Der(\Ln)(k):= \Hom(H_n, \Ln(k+1))=
H_n^\ast \otimes \Ln(k+1)\]
denotes the degree $k$ homogeneous part of $\Hom (H_n, \Ln)$. 
Also, we can endow 
$\Der(\Ln)$ with a graded Lie algebra structure 
by restricting the bracket operation among endomorphisms of $\Der (\Ln)$. 
Again we have $\Der(\Ln)(0) = \Hom(H_n,H_n) \cong \gl{Z}$. 

It is easily checked that for each $k \ge 1$ the module $\Ln(k+1)$ is 
generated by elements of the form
\[[x_{i_1},x_{i_2},\ldots,x_{x_{i_{k+1}}}]:=
[[\cdots[[x_{i_1},x_{i_2}],x_{i_3}],\ldots],x_{i_{k+1}}].\]
Therefore $\Der(\Ln)(k)$ is generated by elements of the form
\[x_l^\ast \otimes [x_{i_1},x_{i_2},\ldots,x_{x_{i_{k+1}}}].\]

\begin{remark}\label{rem:diagram_Lie}
As in the case of $\Der (T(H_n))$, 
the following diagrammatic description for 
$\Der(\Ln)$ is helpful and should be well-known.
The module $\Ln$ is generated by 
rooted binary planar trees, each of whose trivalent vertices 
has a cyclic order and each of whose univalent vertices other than the root 
is colored by an integer in 
$\{1,2,\ldots,n\}$ 
corresponding to the basis of $H_n$, modulo anti-symmetry and IHX relations. 
For example, the element 
$[[[x_{i_1},x_{i_2}],x_{i_3}],[x_{i_4},x_{i_5}]] \in \Ln (5)$ is assigned to 
the left diagram of Figure \ref{fig:tree}. We can extend this description to 
a diagrammatic description for 
$\Der(\Ln)$ by labeling the root by an integer corresponding 
to the basis of $H_n^\ast$. The right diagram of Figure \ref{fig:tree} 
represents 
$x_l^\ast \otimes [[[x_{i_1},x_{i_2}],x_{i_3}],[x_{i_4},x_{i_5}]] \in 
\Der(\Ln(4))$. 

\begin{figure}[htbp]
\begin{center}
\includegraphics[width=0.6\textwidth]{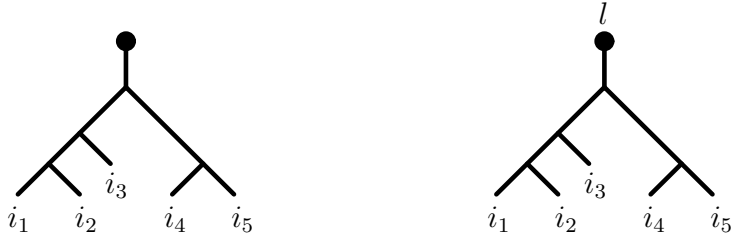}
\end{center}
\caption{The diagrams for 
$[[[x_{i_1},x_{i_2}],x_{i_3}],[x_{i_4},x_{i_5}]] \in \Ln (5)$ (left) and 
$x_l^\ast \otimes [[[x_{i_1},x_{i_2}],x_{i_3}],[x_{i_4},x_{i_5}]] \in 
\Der(\Ln)(4)$ (right)}
\label{fig:tree}
\end{figure}

\noindent
The bracket operation for generators is diagrammatically given as 
in Figure \ref{fig:bracket_Ln}.

\begin{figure}[htbp]
\begin{center}
\includegraphics[width=0.50\textwidth]{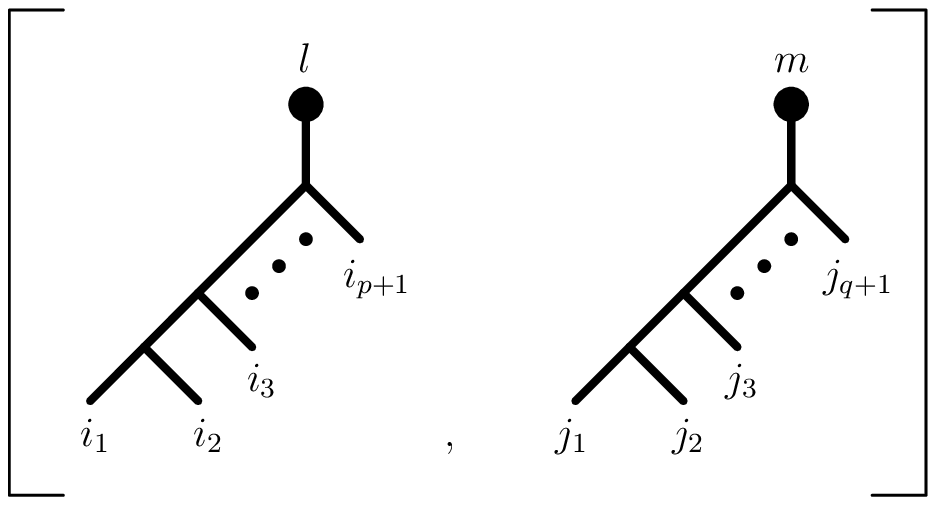}

\bigskip
\includegraphics[width=0.78\textwidth]{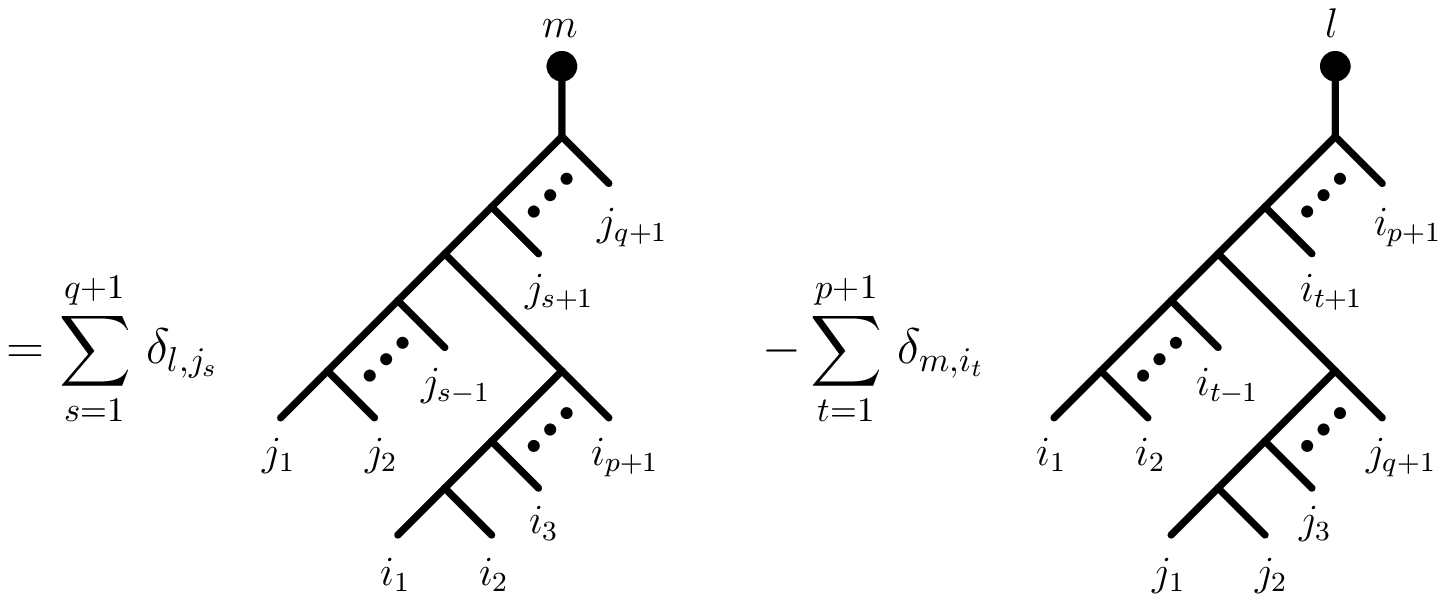}
\end{center}
\caption{Diagrammatic description of the bracket operation}
\label{fig:bracket_Ln}
\end{figure}
\end{remark}

Let $\Der^+ (\Ln)=\displaystyle\bigoplus_{k \ge 1} \Der(\Ln) (k)$ be 
the Lie subalgebra of $\Der(\Ln)$ consisting of all elements of positive 
degrees. 
The abelianization $H_1 (\Der^+(\Ln) \otimes {\mathbb Q})$, 
over the rationals rather than the integers, 
in a certain stable range was 
first computed by Kassabov \cite[Theorem 1.4.11]{kassabov}. 
To explain the result, we recall the {\it trace map} introduced by 
the first author \cite{morita93}. 

It is well known that the Lie algebra $\Ln$ can be embedded in 
$T(H_n)$ by replacing the bracket $[X,Y]$ with $X \otimes Y-Y \otimes X$ 
repeatedly. This operation keeps the degree. 
Then consider a sequence of homomorphisms 
\[\Der (\Ln)(k) = H_n^\ast \otimes 
\Ln(k+1) \longrightarrow H_n^\ast \otimes H_n^{\otimes (k+1)} 
\xrightarrow{C_{12}} H_n^{\otimes k} \longrightarrow S^k H_n,\]
where the first map is the above mentioned embedding, the map $C_{12}$ takes 
the pairing of $H_n^\ast$ and the first component of $H_n^{\otimes (k+1)}$ and 
the last map is the symmetrization map to the $k$-th symmetric power 
of $H_n$. We put the composition by $tr_k$, namely
\[tr_k:\Der (\Ln)(k) \longrightarrow S^k H_n.\]
It was shown in \cite{morita93} that $tr_k$ vanishes on 
$\Im [\,\cdot\,,\,\cdot\,]$. 
Kassabov's theorem \cite[Theorem 1.4.11]{kassabov}, 
reformulated by the trace maps, says 
that $tr_k: H_1(\Der^+ (\Ln) \otimes {\mathbb Q})_k \to S^k H_n \otimes {\mathbb Q}$ 
is an isomorphism if $n(n-1) \ge k \ge 2$. 
Now we show the following result which gives a slight improvement of the 
above theorem of Kassabov.

\begin{thm}\label{thm:lie}
If $n \ge k+2 \ge 4$, we have a direct sum decomposition 
\[\Der(\Ln) (k)=S^k H_n \oplus 
\big[\Der(\Ln)(k-1), \Der(\Ln)(1)\big]\]
where the first projection is given by the trace map $tr_k$. 
In particular, the induced map 
$tr_k: H_1(\Der^+ (\Ln))_k \to S^k H_n$ 
gives an isomorphism stably for any $k \ge 2$.
\end{thm}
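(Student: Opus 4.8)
The plan is to isolate from Theorem \ref{thm:lie} a single spanning statement and then deduce the direct sum decomposition and the homological conclusion by formal nonsense, exactly as in the proof of Theorem \ref{thm:associative}. First I would build a section of the trace map. For each monomial $x_{j_1}\cdots x_{j_k}\in S^kH_n$ (with, say, $j_1\le\cdots\le j_k$) choose an index $l\notin\{j_1,\dots,j_k\}$ — possible since $n\ge k+1$ — and set
\[
s(x_{j_1}\cdots x_{j_k}) = x_l^\ast\otimes[x_l,x_{j_1},\dots,x_{j_k}]\in\Der(\Ln)(k),
\]
extended linearly. A one-line induction on $k$, using that $[x_l,x_{j_1},\dots,x_{j_k}]$ regarded in $T(H_n)$ contains exactly one word beginning with $x_l$, namely $x_l\otimes x_{j_1}\otimes\cdots\otimes x_{j_k}$ with coefficient $1$ (because $x_l$ sits at the outermost-left leaf and $l$ does not reappear), shows $tr_k\circ s=\mathrm{id}_{S^kH_n}$; in particular $tr_k$ is surjective. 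Since $tr_k$ vanishes on $\Im[\,\cdot\,,\,\cdot\,]$ by \cite{morita93}, the theorem reduces to the spanning statement
\[
\Der(\Ln)(k)=s(S^kH_n)+\big[\Der(\Ln)(k-1),\Der(\Ln)(1)\big].\tag{$\ast$}
\]

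Granting $(\ast)$, the rest is formal. Because $tr_k$ is injective on $s(S^kH_n)$ and zero on $\big[\Der(\Ln)(k-1),\Der(\Ln)(1)\big]$, the sum in $(\ast)$ is direct and the first projection is $tr_k$; this is part (1) of the statement. Moreover any element $x$ of $\Sigma:=\sum_{i+j=k,\,i,j\ge1}[\Der(\Ln)(i),\Der(\Ln)(j)]$ satisfies $tr_k(x)=0$, so $\Sigma\cap s(S^kH_n)=0$ and hence $\Sigma=[\Der(\Ln)(k-1),\Der(\Ln)(1)]$; therefore $H_1(\Der^+(\Ln))_k=\Der(\Ln)(k)/\Sigma\cong s(S^kH_n)\cong S^kH_n$ via $tr_k$. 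Since $tr_k$ commutes with the stabilization maps $H_n\hookrightarrow H_{n+1}$, this gives the stable isomorphism for every $k\ge2$ (any $n\ge k+2$ suffices).

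For $(\ast)$ itself I would run an explicit rewriting algorithm on the generators $x_l^\ast\otimes[x_{i_1},\dots,x_{i_{k+1}}]$ (left-normed brackets span $\Ln(k+1)$), organized — as in the proof of Theorem \ref{thm:associative}(2) — by how many of the $i_p$ equal $l$ and where, and carried out diagrammatically on trees as in Remark \ref{rem:diagram_Lie}. The basic move, when $l$ is not among the leaves, is to bracket $x_l^\ast\otimes[x_{i_1},\dots,x_{i_{p-1}},x_l,x_{i_{p+2}},\dots,x_{i_{k+1}}]\in\Der(\Ln)(k-1)$ with $x_l^\ast\otimes[x_{i_p},x_{i_{p+1}}]\in\Der(\Ln)(1)$: this bracket equals $-\,x_l^\ast\otimes[x_{i_1},\dots,x_{i_{p-1}},[x_{i_p},x_{i_{p+1}}],x_{i_{p+2}},\dots,x_{i_{k+1}}]$, and expanding the inner bracket by the Jacobi identity shows that modulo $[\Der(\Ln)(k-1),\Der(\Ln)(1)]$ one may transpose the leaves in positions $p$ and $p+1$. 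Iterating these moves reduces a "fresh-$l$" generator to one depending only on $x_{i_1}$ and the multiset of the remaining leaves; a further, similar family of brackets (including ones whose degree-$1$ factor is marked at a different root, plus the substitution trick used in Case $3$ of Theorem \ref{thm:associative}) collapses these onto the standard forms $x_l^\ast\otimes[x_l,x_{j_1},\dots,x_{j_k}]\in s(S^kH_n)$ modulo brackets, or kills them. The remaining cases, where $l$ occurs among the leaves, are handled by first sliding $x_l$ to the outermost-left leaf by analogous brackets. At each step the hypothesis $n\ge k+2$ provides enough basis indices distinct from those already appearing to make the needed choices.

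The main obstacle is the bookkeeping inside $(\ast)$. Unlike the associative case, every elementary move now spawns Jacobi correction terms, so one must simultaneously control (i) which leaf permutations are realized modulo brackets, (ii) that the additional identities needed to pass from ``depends on $x_{i_1}$ and a multiset'' down to ``depends on the multiset alone'' (i.e. landing in $S^kH_n$ rather than in $H_n\otimes S^kH_n$) are themselves consequences of degree-$1$ brackets, and (iii) that only degree-$1$ brackets — not degree-$2$ ones as in Theorem \ref{thm:associative} — are ever used; this last point is precisely where the Lie case is cheaper and where the argument runs closest to Kassabov's \cite{kassabov}. Making (ii) and the fresh-index count uniform in $k$ and $n$ is the delicate part; everything else is routine tree manipulation.
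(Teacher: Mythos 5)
Your formal scaffolding is correct and coincides with the paper's: the section $s(x_{j_1}\cdots x_{j_k})=x_l^\ast\otimes[x_l,x_{j_1},\dots,x_{j_k}]$ with $l$ fresh is exactly the paper's map $\Phi_k$, the identity $tr_k\circ s=\mathrm{id}_{S^kH_n}$ is verified there by the same "unique word beginning with $x_l$" observation, and your deduction that $(\ast)$ plus the vanishing of $tr_k$ on $\Im[\,\cdot\,,\,\cdot\,]$ yields both the direct sum decomposition and $\Sigma=[\Der(\Ln)(k-1),\Der(\Ln)(1)]$ is sound (the latter is precisely the second bullet of Remark \ref{rem:kassabov}). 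Your worry (iii) is also not an obstacle: the paper confirms that only degree-$1$ brackets are ever needed.

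The gap is that $(\ast)$ itself — which is the entire mathematical content of the theorem — is only sketched, and the points you defer as "delicate bookkeeping" are exactly where the work lies. The paper proves $(\ast)$ by three concrete reductions in the quotient $Q$: (Case 1) an explicit bracket identity $x_l^\ast\otimes[x_{i_1},\dots,x_{i_{k+1}}]=[x_l^\ast\otimes[x_{i_1},x_{i_2}],\,x_l^\ast\otimes[x_l,x_{i_3},\dots,x_{i_{k+1}}]]-\sum_j\delta_{i_j,l}(\cdots)$, whose $\delta$-correction terms are absorbed by one application of the Jacobi identity, reducing to generators with $l=i_1$; (Case 2) a bracket whose degree-$1$ factor is rooted at a \emph{fresh} index $m$, namely $[x_m^\ast\otimes[x_l,x_{i_2}],\,x_l^\ast\otimes[x_m,x_{i_3},\dots,x_{i_{k+1}}]]$, which simultaneously frees the root label and cycles $x_{i_2}$ to the last slot; and (Case 3) the transposition move you describe plus a second fresh-index bracket, which together show the class of $x_l^\ast\otimes[x_l,x_{i_2},\dots,x_{i_{k+1}}]$ in $Q$ depends only on the multiset $\{i_2,\dots,i_{k+1}\}$ and not on $l$ — this is what makes $\Phi_k$ well defined and surjective. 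Your outline names the right moves (transposition via Jacobi, re-rooting, fresh indices) but does not execute any of them, does not show the rewriting terminates or covers the generators where $l$ recurs among the leaves, and does not establish the well-definedness of the map out of $S^kH_n$ (your point (ii)), so as written the proposal reduces the theorem to an unproven claim rather than proving it.
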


\begin{remark}\label{rem:kassabov}
%
%
%
Our proof of the above theorem is very close to the original argument of
Kassabov. Although our stable range is {\it weaker} than his one,
our statement has the following advantages.
\begin{itemize}
\item \ The proof works over ${\mathbb Z}$. 
\item \ We show that 
$\big[\Der(\Ln)(k-1), \Der(\Ln)(1)\big] = 
\displaystyle\sum_{\begin{subarray}{c}
i+j =k \\ i,j \ge 1
\end{subarray}} \big[\Der(\Ln)(i), \Der(\Ln)(j)\big]$, 
namely, any element of $\Im [ \cdot, \cdot ]$ of degree $k$ can be expressed as 
a linear combination of the brackets of elements of degree $1$ and $k-1$. 
\end{itemize}
\end{remark}
\begin{proof}[Proof of Theorem $\ref{thm:lie}$]
Let $\Gamma$ be the generating set of $\Der(\Ln) (k)$ consisting of 
all elements of the form
\[x_l^\ast \otimes [x_{i_1},x_{i_2}, \ldots, x_{i_{k+1}}].\]
First we make the set $\Gamma$ smaller 
as a generating set of the quotient 
\[Q:=\Der(\Ln) (k)/ \big[\Der(\Ln)(k-1), \Der(\Ln)(1)\big].\]

Suppose an element 
$x_l^\ast \otimes [x_{i_1},x_{i_2}, \ldots, x_{i_{k+1}}] \in \Gamma$ is 
given. 

(Case 1) When $l \neq i_1, i_2$, we have an equality
\begin{align*}
x_l^\ast \otimes [x_{i_1},x_{i_2}, \ldots, x_{i_{k+1}}]
&=[x_l^\ast \otimes [x_{i_1},x_{i_2}], \ 
x_l^\ast \otimes [x_l,x_{i_3}, x_{i_4}, \ldots, x_{i_{k+1}}]]\\
&\quad -\sum_{j=3}^{k+1} 
\delta_{i_j,l} 
x_l^\ast \otimes [x_l,x_{i_3}, \ldots, x_{i_{j-1}}, 
[x_{i_1},x_{i_2}],x_{i_{j+1}}, \ldots, x_{i_{k+1}}].
\end{align*}
\noindent
The second term of the right hand side is rewritten as 
\begin{align*}
-\sum_{j=3}^{k+1} 
\delta_{i_j,l} & (
x_l^\ast \otimes [x_l,x_{i_3}, \ldots, x_{i_{j-1}}, 
x_{i_1},x_{i_2},x_{i_{j+1}}, \ldots, x_{i_{k+1}}]\\
&-x_l^\ast \otimes [x_l,x_{i_3}, \ldots, x_{i_{j-1}}, 
x_{i_2},x_{i_1},x_{i_{j+1}}, \ldots, x_{i_{k+1}}])
\end{align*}
\noindent
by applying the Jacobi identity
\[[X,[x_{i_1},x_{i_2}]]=
-[x_{i_1},[x_{i_2},X]]-[x_{i_2},[X,x_{i_1}]]
=-[[X,x_{i_2}],x_{i_1}]+[[X,x_{i_1}],x_{i_2}]\]
with $X=[x_l,x_{i_3}, \ldots, x_{i_{j-1}}]$. Therefore 
the quotient $Q$ can be generated by the elements 
$x_l^\ast \otimes [x_{i_1},x_{i_2}, \ldots, x_{i_{k+1}}]$ 
in $\Gamma$ with $l=i_1$ and $l \neq i_2$. 

(Case 2) For an element 
$x_l^\ast \otimes [x_l,x_{i_2}, \ldots, x_{i_{k+1}}]$ with 
$l \neq i_2$, we take an integer $m$ from 
the set $\{1,2,\ldots,n\}-\{i_2,i_3,\ldots, i_{k+1}\}$ which is 
not empty. Then we have 
\begin{align*}
x_l^\ast \otimes [x_l,x_{i_2}, \ldots, x_{i_{k+1}}]
&=[x_m^\ast \otimes [x_l,x_{i_2}], \ 
x_l^\ast \otimes [x_m,x_{i_3}, x_{i_4}, \ldots, x_{i_{k+1}}]]\\
&\quad + x_m^\ast \otimes [x_m,x_{i_3}, x_{i_4}, \ldots, x_{i_{k+1}}, 
x_{i_2}].
\end{align*}
\noindent
This shows that 
the quotient $Q$ can be generated by the elements in $\Gamma$ of the form 
$x_l^\ast \otimes [x_l,x_{i_2}, \ldots, x_{i_{k+1}}]$ 
with $l \neq i_2, i_3,\ldots,i_{k+1}$. 

(Case 3) Suppose an element 
$x_l^\ast \otimes [x_l,x_{i_2}, \ldots, x_{i_{k+1}}]$ 
of $\Gamma$ with $l \neq i_2, i_3,\ldots,i_{k+1}$ is given. 
For every integer $j$ with $2 \le j \le k$, we apply the Jacobi identity 
to $[[Y,x_{i_j}],x_{i_{j+1}}]$ with 
$Y=[x_l,x_{i_2},\ldots, x_{i_{j-1}}]$. Then we have an equality 
\begin{align*}
x_l^\ast \otimes [x_l,x_{i_2}, \ldots, x_{i_j},x_{i_{j+1}},
\ldots,x_{i_{k+1}}] &=
x_l^\ast \otimes [x_l,x_{i_2}, \ldots, x_{i_{j-1}},
[x_{i_j},x_{i_{j+1}}], x_{i_{j+2}}, \ldots, x_{i_{k+1}}]\\
& \quad +x_l^\ast \otimes [x_l,x_{i_2}, \ldots, x_{i_{j-1}},
x_{i_{j+1}},x_{i_j}, x_{i_{j+2}}, \ldots, x_{i_{k+1}}].
\end{align*}
As for the first term of the right hand side, we 
take an integer $m$ 
from $\{1,2,\ldots,n\}-\{l,i_2,i_3,\ldots, i_{k+1}\}$ which is 
not empty and consider the equality 
\begin{align*}
&x_l^\ast \otimes [x_l,x_{i_2}, \ldots, x_{i_{j-1}},
[x_{i_j},x_{i_{j+1}}], x_{i_{j+2}}, \ldots, x_{i_{k+1}}]\\
&=
[x_m^\ast \otimes [x_{i_j},x_{i_{j+1}}],\ 
x_l^\ast \otimes [x_l,x_{i_2}, \ldots, x_{i_{j-1}},
x_m, x_{i_{j+2}}, \ldots, x_{i_{k+1}}]].
\end{align*}
\noindent
Consequently the equality 
\[x_l^\ast \otimes [x_l,x_{i_2}, \ldots, x_{i_j},x_{i_{j+1}},\ldots, x_{i_{k+1}}]=x_l^\ast \otimes [x_l,x_{i_2}, \ldots, x_{i_{j-1}},
x_{i_{j+1}},x_{i_j}, x_{i_{j+2}}, \ldots, x_{i_{k+1}}]\]
holds as an element of the quotient $Q$. In particular, we see that 
the element $x_l^\ast \otimes [x_l,x_{i_2}, \ldots, x_{i_{k+1}}]$ in $Q$ with 
$l \neq i_2, i_3,\ldots,i_{k+1}$ 
is invariant under the permutation of the indices 
$x_{i_2}, x_{i_3},\ldots,x_{i_{k+2}}$. Moreover the equality 
\begin{align*}
x_l^\ast \otimes [x_l,x_{i_2}, \ldots, x_{i_{k+1}}]&=
[x_m^\ast \otimes [x_l,x_{i_2}],\ x_l^\ast \otimes 
[x_m,x_{i_3}, \ldots, x_{i_{k+1}}]]\\
&\quad +
x_m^\ast \otimes [x_m,x_{i_3}, \ldots, x_{i_{k+1}}, x_{i_2}]
\end{align*}
\noindent
shows that as elements of the quotient $Q$, we have 
\[x_l^\ast \otimes [x_l,x_{i_2}, \ldots, x_{i_{k+1}}]=
x_m^\ast \otimes [x_m,x_{i_3}, \ldots, x_{i_{k+1}}, x_{i_2}]=
x_m^\ast \otimes [x_m,x_{i_2}, \ldots, x_{i_{k+1}}]\]
as long as $l,m \neq i_2, i_3,\ldots,i_{k+1}$. 

For every $k \ge 2$, define a homomorphism $\Phi_k:S^k H_n \to Q$ by 
\[\Phi_k (x_{i_2}x_{i_3}\cdots x_{i_{k+1}})=
x_l^\ast \otimes [x_l,x_{i_2}, \ldots, x_{i_{k+1}}]\]
where $l$ is chosen for each generator $x_{i_2}x_{i_3} \cdots x_{i_{k+1}}$ 
of $S^k H_n$ so that $l \neq i_2, i_3,\ldots,i_{k+1}$. 
The argument in the previous paragraphs shows that $\Phi_k$ is 
well-defined, independent of the choices of $l$ and surjective. 

On the other hand, 
since $tr_k$ vanishes on $\Im [ \cdot , \cdot ]$ as already mentioned, 
we have a homomorphism 
\[tr_k:Q \longrightarrow S^k H_n\]
and it is easily checked that 
\[tr_k(x_l^\ast \otimes [x_l,x_{i_2}, \ldots, x_{i_{k+1}}]) 
=x_{i_2}x_{i_3}\cdots x_{i_{k+1}}\]
if $l \neq i_2, i_3,\ldots,i_{k+1}$. Therefore 
we have $tr_k \circ \Phi_k= \mathrm{id}_{S^k H_n}$ implying that 
$tr_k:Q \to S^k H_n$ is an isomorphism. 
This completes the proof. 
\end{proof}

Now we prove Theorem \ref{thm:h1} (ii). 
More precisely we show the following. 
\begin{cor}
$(1)$ For any $n \ge 2$, 
the natural pairing $H_n^\ast \otimes H_n \to {\mathbb Z}$ 
induces an isomorphism 
$H_1 (\Der (\Ln))_0 \cong {\mathbb Z}$.

\noindent
$(2)$ For any $n \ge 2$, we have $H_1 (\Der (\Ln))_1=0$. 

\noindent
$(3)$ If $n \ge k+2 \ge 4$, we have $H_1 (\Der (\Ln))_k=0$. 
\end{cor}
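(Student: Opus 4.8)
The plan is to follow the three-step pattern of the proof of Corollary~\ref{cor:associative}, with Theorem~\ref{thm:lie} in the role of Theorem~\ref{thm:associative} and the trace map $tr_k$ in the role of $C_{13}$. Part~$(1)$ is verbatim the argument of Corollary~\ref{cor:associative}$(1)$: since $\Der(\Ln)$ is concentrated in non-negative degrees, $H_1(\Der(\Ln))_0=\Der(\Ln)(0)/[\Der(\Ln)(0),\Der(\Ln)(0)]=\gl{Z}/[\gl{Z},\gl{Z}]$, and exactly as there every traceless integral matrix lies in $\Im[\,\cdot\,,\,\cdot\,]$, so the natural pairing (the trace) induces $H_1(\Der(\Ln))_0\cong\Z$. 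For part~$(2)$ I would reuse the observation of Remark~\ref{rmk:B}, which the referee already noted applies to $\Der(\Ln)$ as well: writing $I\in\Der(\Ln)(0)$ for the identity derivation on $H_n$ (so $I$ acts on $\Ln(j)$ as multiplication by $j$), one has $[I,D]=kD$ for every $D\in\Der(\Ln)(k)$; in degree $k=1$ this reads $D=[I,D]$, an identity valid over $\Z$, so $\Der(\Ln)(1)\subseteq\Im[\,\cdot\,,\,\cdot\,]$ and $H_1(\Der(\Ln))_1=0$.

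The content is in part~$(3)$, so assume $n\ge k+2\ge4$. By Theorem~\ref{thm:lie} we have $\Der(\Ln)(k)=S^kH_n\oplus[\Der(\Ln)(k-1),\Der(\Ln)(1)]$ with first projection $tr_k$, and by Remark~\ref{rem:kassabov} the second summand already equals the sum $\sum_{i+j=k,\,i,j\ge1}[\Der(\Ln)(i),\Der(\Ln)(j)]$ of all positive-degree brackets. Consequently $H_1(\Der(\Ln))_k$ is the quotient of $S^kH_n$ by the image of $[\Der(\Ln)(0),\Der(\Ln)(k)]$ under $tr_k$, so --- just as in Corollary~\ref{cor:associative}$(3)$ --- it suffices to prove that the composite
\[
\Der(\Ln)(0)\otimes\Der(\Ln)(k)\xrightarrow{[\,\cdot\,,\,\cdot\,]}\Der(\Ln)(k)\xrightarrow{tr_k}S^kH_n
\]
is surjective.

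For surjectivity I would exhibit an explicit commutator hitting each monomial generator $x_{j_1}x_{j_2}\cdots x_{j_k}$ of $S^kH_n$. Pick an index $a\notin\{j_1,\dots,j_k\}$, which is possible since $n\ge k+1$; then $B:=x_a^\ast\otimes[x_a,x_{j_1},x_{j_2},\dots,x_{j_k}]\in\Der(\Ln)(k)$ has $tr_k(B)=x_{j_1}x_{j_2}\cdots x_{j_k}$, so it remains only to realize $B$, modulo $\Ker tr_k$, as a bracket with a degree-$0$ element. If the monomial is square-free, one takes the diagonal matrix $x_{j_1}^\ast\otimes x_{j_1}\in\Der(\Ln)(0)$ and checks directly that $B=[x_{j_1}^\ast\otimes x_{j_1},B]$, the key point being that this derivation acts on the inner iterated bracket of $B$ with multiplicity exactly $1$ and annihilates the source of $B$. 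If instead some variable, say $x_{j_1}$, occurs in the monomial with multiplicity $\ge2$, one introduces a second index $a'\notin\{j_1,\dots,j_k\}$ with $a'\ne a$ and uses the off-diagonal matrix $x_{a'}^\ast\otimes x_{j_1}$, verifying that $tr_k\big([\,x_{a'}^\ast\otimes x_{j_1},\ x_a^\ast\otimes[x_a,x_{a'},x_{j_2},\dots,x_{j_k}]\,]\big)=x_{j_1}x_{j_2}\cdots x_{j_k}$; again the coefficient comes out to be $1$. Both verifications are routine applications of the definition of the bracket (or of the diagrammatic calculus of Remark~\ref{rem:diagram_Lie}), and together they give the surjectivity, whence $H_1(\Der(\Ln))_k=0$.

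The step I expect to be the main obstacle is this last one, and the real difficulty is working over $\Z$ rather than $\Q$: rationally one could just remark that $S^kH_n\otimes\Q$ is a nontrivial irreducible module over $\gl{Q}=\Der(\Ln)(0)\otimes\Q$, so its space of coinvariants vanishes and there is nothing left to compute. Integrally one must instead keep careful track of the integer multiplicities produced when a derivation is applied to an iterated bracket with repeated letters; this is exactly what forces the split into the square-free and non-square-free cases above and the use of one or two fresh indices. No loss of range is incurred beyond the hypothesis $n\ge k+2$ already demanded by Theorem~\ref{thm:lie}.
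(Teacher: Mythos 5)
Your proposal is correct and follows essentially the same route as the paper: parts (1) and (2) are handled exactly as in Corollary 3.4 (traceless matrices as commutators, and $[I,D]=kD$ from Remark 3.5), and for part (3) the paper likewise combines Theorem 4.3 with an explicit bracket $x_l^*\otimes[x_l,x_{i_2},\ldots,x_{i_{k+1}}]=[x_l^*\otimes[x_l,x_{i_2},\ldots,x_{i_k},x_m],\,x_m^*\otimes x_{i_{k+1}}]$ ($m$ a fresh index), which is the same off-diagonal degree-$0$ substitution trick you use (your diagonal/square-free case split is harmless but unnecessary, since the fresh-index bracket already covers all monomials).
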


\begin{proof}
By an argument similar to the proof of Corollary \ref{cor:associative}, 
(1) and (2) follow immediately. 
(3) follows from 
Theorem \ref{thm:lie} 
and the equality 
\[
x_l^* \otimes [x_l,x_{i_2} , \ldots,x_{i_{k+1}}] 
= 
[x_l^* \otimes [x_l,x_{i_2} , \ldots,x_{i_k},x_m], x_m^* \otimes x_{i_{k+1}}],
\]
where $m \in \{1,2,\ldots,n \} - \{l, i_2, \ldots, i_{k+1}\} \ne \emptyset$. 
Indeed they show that 
any element is in $\Im [ \cdot, \cdot]$ 
if we are allowed to use elements of degree $0$. 
\end{proof}
\begin{remark}
As was mentioned in Remark \ref{rmk:B}, 
one of the referees points out that the 
abelianization of $\Der(\Ln)\otimes \Q$ can be
easily determined as $H_1(\Der(\Ln) \otimes\Q) \cong\Q$ 
because the identity map I belongs to $\Der(\Ln)(0)\otimes\Q$.
\end{remark}

\section{Symplectic derivation Lie algebra of the free associative algebra}\label{sec:symp}

Let $\Sg$ be a closed connected oriented 
surface of genus $g \ge 2$. 
The first integral homology group $H_1 (\Sg)$ 
of $\Sg$ is isomorphic to 
a free abelian group $H_{2g}$ of rank $2g$. 
This 
module has a natural intersection form 
\[\mu : H_1 (\Sg) \otimes H_1 (\Sg) \longrightarrow \Z\]
which is non-degenerate and skew-symmetric. 
Let $\{ a_1,\ldots,a_g,b_1,\ldots,b_g \}$ be 
a symplectic basis of $H_1 (\Sg)$ 
with respect to $\mu$, namely 
\[\mu(a_i,a_j)=0,\quad \mu(b_i,b_j)=0,\quad
\mu(a_i,b_j)=\delta_{ij}.\]
The Poincar\'e duality gives a canonical isomorphism between 
$H_1 (\Sg)$ and its 
dual module $H_1 (\Sg)^\ast =H^1 (\Sg)$, the 
first integral cohomology group of $\Sg$. In 
this isomorphism, $a_i$ (resp.\ $b_i$) $\in H_1 (\Sg)$ 
corresponds to $b_i^\ast$ 
(resp.\ $-a_i^\ast$) $\in H^1 (\Sg)$ 
where $\{ a_1^\ast ,\ldots,a_g^\ast ,b_1^\ast ,\ldots,b_g^\ast \}$ 
is the dual basis of $H^1 (\Sg)$. 
We denote these canonically 
isomorphic modules by $H$ for simplicity. 
We write $\mathrm{Sp}(H)$ for the symplectic transformation 
group of $H$. It consists of all automorphisms of $H$ 
preserving $\mu$. 

Denote the symplectic class by 
\[\omega_0 = \displaystyle\sum_{i=1}^g (a_i \otimes b_i - b_i \otimes a_i) 
\in H \otimes H,\]
which is independent of the choice of a symplectic basis of $H$ and 
is invariant under the action of $\mathrm{Sp}(H)$. 
A derivation $D \in \Der (T(H)) \cong \Der (T(H_{2g}))$ 
is said to be {\it symplectic} 
if it satisfies 
$D(\omega_0)=0$. It is easily checked that 
the set of all symplectic derivations 
forms a Lie subalgebra of $\Der (T(H))$. 

In this section, we shall consider the rational forms of the above modules. 
Put $H_\Q=H \otimes \Q$ and define a derivation of $H_\Q$ to be 
a linear map from $T(H_\Q)$ to itself satisfying the 
same formula as in Section \ref{sec:associative}.
Then we have 
$\Der (T(H_\Q)) \cong 
\Der (T(H))\otimes \Q$ as Lie algebras over $\Q$ and it is naturally graded. 
Let $\ag$ be the subspace of $\Der (T(H_\Q))$ consisting of all 
symplectic derivations. 
It is a Lie subalgebra of $\Der (T(H_\Q))$. 
This Lie algebra was first studied by Kontsevich 
\cite{kontsevich1,kontsevich2} (see Section \ref{sec:moduli}). 
A grading of $\ag$ is induced from $\Der (T(H_\Q))$ and 
define $\ag (k)$ to be its degree $k$ homogeneous part. We have a direct 
sum decomposition 
\[\ag = \bigoplus_{k \ge 0} \ag(k).\]
We also define a Lie subalgebra 
$\ag^+:=\displaystyle\bigoplus_{k \ge 1} \ag(k)$ consisting 
of all derivations of positive degrees. 
Note that the symplectic transformation 
group $\mathrm{Sp}(H_\Q)$ of $H_\Q$ acts on $\ag (k)$ for each $k$. 

Using the identification 
\[\Hom (H_\Q, H_\Q^{\otimes (k+1)})= H_\Q^\ast 
\otimes H_\Q^{\otimes (k+1)}=
H_\Q^{\otimes (k+2)},\] 
we can rewrite 
the symplecticity of a derivation of $H_\Q$ as follows 
(see also \cite[Proposition 2]{morita_GT}). By definition, a symplectic 
derivation $D$ satisfies that 
\begin{equation}\label{eq:sympderiv}
0=D (\omega_0)=\sum_{i=1}^g
\big(D(a_i) \otimes b_i+a_i \otimes D(b_i)
-D(b_i)\otimes a_i -b_i \otimes D(a_i) \big).
\end{equation}
\noindent
Since $D \in \Hom (H_\Q, H_\Q^{\otimes (k+1)})$ corresponds to 
\[\sum_{i=1}^g \big(a_i^\ast \otimes D(a_i) + b_i^\ast \otimes D(b_i)\big) 
=\sum_{i=1}^g \big(-b_i \otimes D(a_i) + a_i \otimes D(b_i)\big) =:D^\ast\]
in $H_\Q^\ast \otimes H_\Q^{\otimes (k+1)}=
H_\Q^{\otimes (k+2)}$, the above equality (\ref{eq:sympderiv}) says that 
\[D^\ast =\sigma_{k+2} (D^\ast),\]
where $\sigma_{k+2}$ is a generator of the 
cyclic group $\mathbb{Z}/(k+2)\mathbb{Z}$ acting on $H_\Q^{\otimes (k+2)}$ 
by 
\[\sigma_{k+2} (u_1 \otimes u_2 \otimes \cdots \otimes u_{k+2})=
u_2 \otimes \cdots \otimes u_{k+2} \otimes u_1.\] 
Consequently, the degree $k$ 
part $\ag(k) \subset \Hom (H_\Q, H_\Q^{\otimes (k+1)})$ 
is rewritten as 
\[\ag (k) = \left( H_\Q^{\otimes (k+2)}\right)^{\mathbb{Z}/(k+2)\mathbb{Z}},\]
where the right hand side is the invariant part of 
$H_\Q^{\otimes (k+2)}$ with respect to the action of the 
group $\mathbb{Z}/(k+2)\mathbb{Z}$. 
From this description, we can see 
that $\ag (0) \cong \mathfrak{sp}(H_\Q) \cong S^2 H_\Q$, 
the symplectic Lie algebra, and that  
\[\ag(1)=\left( H_\Q^{\otimes 3}\right)^{\mathbb{Z}/3\mathbb{Z}} \cong 
S^3 H_\Q \oplus \wedge^3 H_\Q.\]

Now we focus on the abelianizations of $\ag$ and $\ag^+$. 
First we consider the latter. The weight $1$ part $H_1 (\ag^+)_1$ of 
$H_1 (\ag^+)$ is given by 
$\ag(1)$. 
The weight $2$ part $H_1 (\ag^+)_2$ was calculated by 
the first author in \cite[Theorem 6]{morita_GT} and it is given by 
\[H_1 (\ag^+)_2 \cong \wedge^2 H_\Q/\langle \omega_0 \rangle\]
as $\mathrm{Sp}(H_\Q)$-modules, where $\langle \omega_0 \rangle$ 
denotes the submodule of $\wedge^2 H_\Q$ spanned by 
$\omega_0$ as an element of $\wedge^2 H_\Q \subset H_\Q \otimes H_\Q$. 
In fact, an argument similar to the one just before Theorem \ref{thm:associative} 
shows that the composition 
\[\ag (2) \hookrightarrow H_\Q^\ast \otimes H_\Q^{\otimes 3} 
\xrightarrow{C_{13}} H_\Q \otimes H_\Q \xrightarrow{\text{proj.}} 
\wedge^2 H_\Q/\langle \omega_0 \rangle\]
is an $\mathrm{Sp}(H_\Q)$-equivariant epimorphism which 
annihilates $[\ag (1), \ag (1)]$. 
Then a direct calculation shows that this map just gives $H_1 (\ag^+)_2$. 

The main result of this section is the following: 
\begin{thm}\label{thm:symp}
If $g \ge k+3 \ge 6$, then $H_1 (\ag^+)_k=0$. 
\end{thm}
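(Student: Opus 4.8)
The plan is to mimic the strategy used in Theorems \ref{thm:associative} and \ref{thm:lie}: exhibit a $\mathrm{Sp}(H_\Q)$-equivariant splitting of $\ag(k)$ into the "obvious" abelianization contribution and the subspace of brackets. Since the weight $1$ and weight $2$ parts of $H_1(\ag^+)$ have already been identified (as $\ag(1) = S^3 H_\Q \oplus \wedge^3 H_\Q$ and $\wedge^2 H_\Q/\langle\omega_0\rangle$ respectively), Theorem \ref{thm:symp} asserts that for $k \ge 3$ and $g$ large enough, \emph{all} of $\ag(k)$ lies in $\sum_{i+j=k,\ i,j\ge 1}[\ag(i),\ag(j)]$. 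So the goal is: show $\ag(k) = [\ag(k-1),\ag(1)] + [\ag(k-2),\ag(2)]$ (or some fixed finite sum of such brackets) once $g \ge k+3$.

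First I would set up the combinatorial model: $\ag(k) = (H_\Q^{\otimes(k+2)})^{\Z/(k+2)\Z}$, with $H_\Q = H_{2g}\otimes\Q$ spanned by $a_1,\dots,a_g,b_1,\dots,b_g$. A spanning set consists of cyclic symmetrizations of monomials $x_{i_1}\otimes\cdots\otimes x_{i_{k+2}}$ in these $2g$ basis vectors. The key point to exploit is the same one that drives the associative case: the bracket formula (\ref{eq:bracket}), combined with symplecticity, lets one "graft" a short tensor word into a longer one whenever there is an index available that pairs nontrivially. Concretely, I would first reduce to cyclic words in which some symplectic-dual pair $a_s, b_s$ appears only once, or is entirely absent; because $g \ge k+3 > k+2$, for any given monomial there are at least three values of the index $s$ such that neither $a_s$ nor $b_s$ occurs in it, which gives the "room" to insert auxiliary letters and then cancel them against brackets. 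This mirrors the role played by the choice of $m \notin \{i_1,\dots,i_{k-1}\}$ in Case 3 of Theorem \ref{thm:associative} and the choice of $m$ in Cases 2 and 3 of Theorem \ref{thm:lie}, except that here each "insertion" must be done in a way compatible with the cyclic invariance, so one works with cyclic words throughout and uses brackets of \emph{symplectic} derivations (i.e., cyclically invariant tensors) rather than arbitrary ones.

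The main steps, in order, would be: (1) express $[F,G]$ for $F\in\ag(p)$, $G\in\ag(q)$ purely in terms of cyclic words, deriving the "cyclic grafting" identity — splitting a cyclic word at each of its $k+2$ positions and inserting $G$, minus the symmetric terms; (2) use this to show that any cyclic monomial can be reduced modulo brackets to one with a prescribed normal form, e.g. one whose letters, read cyclically, are grouped so that the index being "slid" moves freely — analogous to the sliding lemma (Figure \ref{fig:geso45}) in the associative case; (3) handle the base cases, i.e. show the normal-form words themselves are brackets, using explicit bracket identities as in the bulleted lists in the proof of Theorem \ref{thm:associative}(1); (4) assemble these into $\ag(k) = [\ag(k-1),\ag(1)] + [\ag(k-2),\ag(2)]$ and conclude $H_1(\ag^+)_k = 0$. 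Throughout, one must track that all auxiliary derivations introduced are genuinely symplectic, which is automatic if one only ever writes down cyclically-invariant tensors.

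The hard part will be step (2): in the associative and free-Lie cases the "sliding" argument works because one can freely choose a brand-new index $m$ and there is no constraint tying positions together, whereas for $\ag$ the cyclic-invariance constraint couples all $k+2$ slots, so the insertion/cancellation bookkeeping is genuinely more delicate and the loss of $3$ (requiring $g \ge k+3$ rather than $g \ge k$ or $g \ge k+2$) presumably comes precisely from needing several free index pairs at once to carry out a single reduction step while staying inside the invariant subspace. A secondary subtlety is that $\ag(k)$ is \emph{not} a free module on monomials — the cyclic symmetrization has nontrivial kernel/relations — so one must be careful that the spanning set used is honest and that relations among cyclic words are respected; I would phrase everything in terms of the invariant subspace directly to avoid this. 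If the direct sliding approach proves too intricate, a fallback is to pass to the associative ambient algebra, use Theorem \ref{thm:associative} to write a symplectic derivation of degree $k$ as a sum of brackets of \emph{arbitrary} derivations of lower degree, and then symmetrize (average over $\mathrm{Sp}(H_\Q)$, or over the cyclic group) to replace those by brackets of \emph{symplectic} derivations — but this requires checking that the averaging does not destroy the bracket structure, which is the real content and may well fail, so I expect the honest combinatorial argument to be necessary.
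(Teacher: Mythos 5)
Your overall strategy --- model $\ag(k)$ by cyclic words (the paper calls them spiders), use bracket ``grafting'' identities with fresh indices supplied by the hypothesis $g\ge k+3$, reduce to a normal form by sliding, and then kill the normal forms --- is indeed the skeleton of the paper's argument, and your steps (1) and (2) correspond to its Lemmas on separable diagrams and on reduction to the ``standard form'' (the latter being a genuinely delicate double induction on a configuration parameter and a multiplicity, which you correctly flag as the bookkeeping-heavy part). However, there is a genuine gap at your step (3). You assume the normal-form words can be exhibited directly as brackets ``using explicit bracket identities as in the bulleted lists in the proof of Theorem \ref{thm:associative}(1).'' This fails for roughly half the degrees: whether a standard-form diagram can be slid into a separable (hence manifestly bracket) position is governed by a parity obstruction --- the inner boundary of the fattened chord diagram is connected only when the number of single-paired chords is even --- and for $k\equiv 2,3\pmod 4$ the relevant standard forms (Types (a) and (b) in the paper) are \emph{not} reducible this way; chord cycling merely carries one such diagram to another of the same shape with permuted colors.

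The missing idea is how the paper disposes of these residual diagrams: after normalizing colors, it splits the distinguished spider $\widetilde{S}$ at the center of its chain of chords, observes that the two correction terms are exchanged (up to sign) by taking the mirror image combined with an explicit symplectic transformation, and uses equivariance of the bracket to conclude a relation of the form $\widetilde{S}\equiv 2m\,\widetilde{S}$ modulo $\Im[\,\cdot\,,\,\cdot\,]$ for some integer $m$, whence $\widetilde{S}\equiv 0$ since $1-2m$ is invertible over $\Q$. This is not an ``exhibit it as a bracket'' argument at all, and no amount of explicit low-degree identities of the kind you cite will substitute for it; note also that it is intrinsically a rational argument, consistent with $\ag$ being defined over $\Q$. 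Your own suspicion about the fallback is correct as well: writing a symplectic derivation as a sum of brackets of arbitrary derivations via Theorem \ref{thm:associative} and then averaging does not return brackets of \emph{symplectic} derivations, so that route does not close the gap either.
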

\noindent
For the proof of this theorem, we use more diagrammatic-minded argument 
than those in the previous cases. We introduce 
{\it spiders} and {\it chord diagrams} which play 
important roles in our proof. 

The vector space $\ag (k) = \left( 
H_\Q^{\otimes (k+2)}\right)^{\mathbb{Z}/(k+2)\mathbb{Z}}$ 
is generated by vectors of the form 
\[S(i_1,i_2,\ldots,i_{k+2}):=
\sum_{j=1}^{k+2} \sigma_{k+2}^j (a_{i_1} \otimes a_{i_2} \otimes 
\cdots \otimes a_{i_{k+2}}),\]
where $i_1, i_2 ,\ldots, i_{k+2} \in \{\pm 1,\pm 2,\ldots,\pm g\}$ and 
$a_l:=b_{-l}$ for $l<0$. 
We call such a vector $S(i_1,i_2,\ldots,i_{k+2})$ a {\it spider\/} 
(see also Conant-Vogtmann \cite{cv}). 
In a natural way, we can represent a spider in $\ag (k)$  
by a graph with one 
$(k+2)$-valent vertex and $(k+2)$ univalent vertices, each of which 
is colored by an element in $\{\pm 1,\pm 2,\ldots,\pm g\}$ corresponding 
to the symplectic basis of $H_\Q$ and is connected by an edge called a {\it leg\/} 
to the $(k+2)$-valent 
vertex. The edges (and hence vertices) are ordered cyclically. 
For example, the left of Figure \ref{fig:spider_chord} represents 
the spider $S(1,4,-2,-1,3,-1,2,1)=S(4,-2,-1,3,-1,2,1,1)=\cdots$.

For two spiders $S_1=S(i_1,i_2,\ldots,i_{p+2}) \in \ag (p)$ and 
$S_2=S(j_1,j_2,\ldots,j_{q+2}) \in \ag (q)$, 
their bracket $[S_1,S_2] \in \ag (p+q)$ 
is diagrammatically given 
by the formula shown in Figure \ref{fig:spider_bracket}.

\begin{figure}[htbp]
\begin{center}
\includegraphics[width=0.95\textwidth]{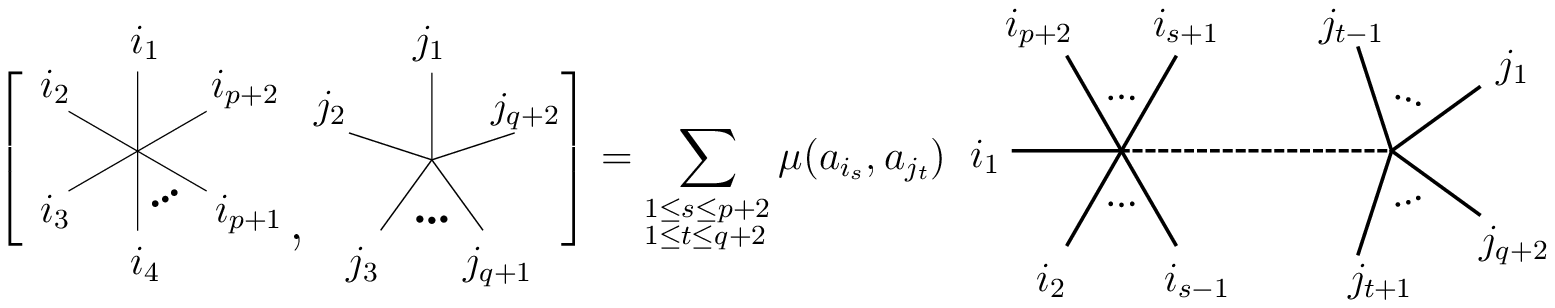}
\end{center}
\caption{Bracket of spiders, where the dashed line in the right hand 
side is collapsed to a point to make a new spider}
\label{fig:spider_bracket}
\end{figure}

To a spider $S$, we associate a {\it chord diagram} $C(S)$ 
(in a generalized sense) so that 
the vertices of $C(S)$ are ordered cyclically and colored 
according to the legs of $S$ and 
two vertices are connected by a chord if their colors differ by sign. 
(Two vertices with the same color are not connected.) 
We identify a spider with the corresponding chord diagram. 

\begin{figure}[htbp]
\begin{center}
\includegraphics[width=0.6\textwidth]{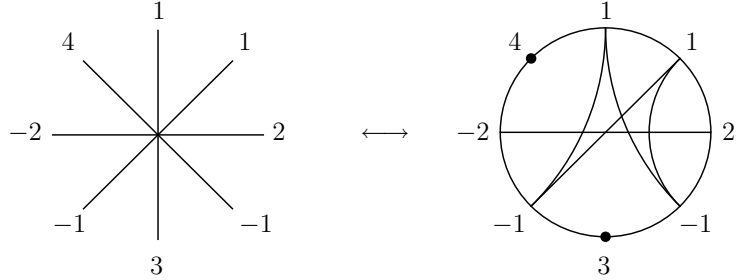}
\end{center}
\caption{A spider and a chord diagram}
\label{fig:spider_chord}
\end{figure}

\begin{definition}\label{def:classification}
A vertex $v$ of a chord diagram is said to be 
\begin{itemize}
\item[(a)] {\it unpaired} if it is not connected to any other vertex 
by a chord. 

\item[(b)] {\it single paired} if it is connected to only one other vertex, 
say $w$, by a chord and $w$ is connected to only $v$. 

\item[(c)] {\it multiple paired} if it is neither unpaired nor single paired. 
\end{itemize}
\end{definition}
\noindent
By abuse of notation, we also say ``a color $i$ is {\it unpaired}\/'', 
``a chord is {\it single paired\/}'', etc. 

\begin{definition}\label{def:multiplicity}
For a chord diagram $C$, its {\it multiplicity} $m(C)$ is defined by 
\[m(C)=2 \mbox{(number of chords)}-\mbox{(number of vertices having chords)}.\]
\end{definition}
\noindent
For example, the multiplicity of the chord diagram in Figure 
\ref{fig:spider_chord} is $4$. 
The multiplicity of a chord diagram without multiple paired vertices 
is zero by definition. Note that the multiplicity only depends on the set of colors of the diagram.

\begin{definition}\label{def:separable}
A chord diagram $C$ is said to be {\it separable} if there exists an 
arc inside the outer circle of $C$ 
connecting two points of the outer circle which are not vertices of $C$ 
such that each region separated by the arc has at least two vertices and 
the arc does not intersect with the chords. 
\end{definition}
\begin{lem}\label{lem:separable}
If $g \ge k+3 \ge 6$ and 
the chord diagram $C(S)$ of a spider $S$ is separable, then 
$C(S)$ is in $\Im [\,\cdot\,,\,\cdot\,]$.
\end{lem}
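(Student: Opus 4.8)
The plan is to use the separating arc to realize a separable chord diagram literally as a bracket of two smaller spiders, and then to correct the error terms that appear because the bracket of spiders (Figure \ref{fig:spider_bracket}) is a sum over pairs of legs with opposite colors, not a single gluing. First I would set up notation: given a separable spider $S$ with separating arc $\gamma$, cut the cyclic sequence of legs of $S$ into two cyclic blocks $A=(i_1,\dots,i_p)$ and $B=(j_1,\dots,j_q)$ lying on the two sides of $\gamma$, each with at least two legs. Since $g\ge k+3$, there are at least three colors not occurring among the colors of $S$; pick such a color $m$, so that $m$ and $-m$ are both ``fresh''. The idea is to write $S=[S_1,S_2]$ (up to corrections) where $S_1$ is the spider obtained from block $A$ by appending a leg colored $m$, i.e. $S_1=S(i_1,\dots,i_p,m)\in\mathfrak a_g(p-1)$, and $S_2$ is obtained from block $B$ by appending a leg colored $-m$, i.e. $S_2=S(j_1,\dots,j_q,-m)\in\mathfrak a_g(q-1)$. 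Here $p+q=k+2$, so $S_1\in\mathfrak a_g(p-1)$ and $S_2\in\mathfrak a_g(q-1)$ with $(p-1)+(q-1)=k$, and both have positive degree because $p,q\ge 2$.

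Next I would expand $[S_1,S_2]$ using the spider bracket formula. The bracket is a sum, over all ordered pairs consisting of a leg of $S_1$ and a leg of $S_2$ whose colors are negatives of each other, of the spider obtained by deleting those two legs and joining the two vertices (the dashed-line collapse of Figure \ref{fig:spider_bracket}), with signs. One such pair is the new $m$-leg of $S_1$ together with the new $(-m)$-leg of $S_2$: deleting and gluing these exactly reconstitutes $S$, because the cyclic orders of $A$ and $B$ are reinserted along the arc $\gamma$. All other terms come from pairs where at least one of the two matched legs is an original leg of $S$; call these the \emph{error terms}. The core of the argument is to show that every error term is itself in $\Im[\,\cdot\,,\,\cdot\,]$, so that $S\equiv [S_1,S_2]\equiv 0$ modulo $\Im[\,\cdot\,,\,\cdot\,]$. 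Each error term is again a spider on $k+2$ legs whose colors are exactly the colors of $S$ together with one unused pair $\{m,-m\}$ minus one matched opposite pair from $S$; in particular its multiplicity is controlled, and I would check that such an error spider still admits a separating arc — intuitively, because it is built by gluing a sub-block of $A$ to a sub-block of $B$, one can again find an arc cutting its leg-cycle into two pieces each of size $\ge 2$, as long as $k+2\ge 4$, which holds since $k\ge 2$. This lets one run an induction, but one must be careful that the induction is on something that genuinely decreases.

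The right inductive parameter, and the place where I expect the main difficulty, is arranging the error spiders to be \emph{simpler} than $S$ in a well-founded sense. The natural candidate is to induct on the multiplicity $m(C(S))$ (Definition \ref{def:multiplicity}) together with a secondary quantity such as the number of chords or the total valence, since gluing a leg of $S_1$ to an original leg of $S$ tends to destroy a chord of $S$ and thereby drop the multiplicity — but the appearance of the fresh pair $\{m,-m\}$ and the possibility that the glued legs were unpaired could keep the multiplicity the same, so the bookkeeping has to be done explicitly. A cleaner route, which I would try first, is to choose the block decomposition and the fresh color so that \emph{no} original leg of $S$ shares a color with $\pm m$ (automatic) and so that the only opposite-color pairs between $S_1$ and $S_2$ are $(m,-m)$ itself: this happens precisely when no color in block $A$ is the negative of a color in block $B$, i.e. when the separating arc $\gamma$ can be chosen to not be crossed by any chord — but Definition \ref{def:separable} already \emph{requires} $\gamma$ to avoid all chords. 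Hence every chord of $S$ lies entirely within block $A$ or entirely within block $B$, so an original leg of $S$ in $A$ has its opposite-colored partner (if any) also in $A$, never in $B$; therefore the \emph{only} matching pair between $S_1$ and $S_2$ is $(m,-m)$, the expansion of $[S_1,S_2]$ has a single term, and $[S_1,S_2]=\pm S$ on the nose. With that observation the lemma follows with no induction at all, and the ``hard part'' reduces to the elementary verification that the spider bracket formula in Figure \ref{fig:spider_bracket}, applied to $S_1$ and $S_2$ with their disjoint color-supports except for the single fresh pair, produces exactly $\pm S$ with the reinserted cyclic orders matching the two sides of $\gamma$; I would carry this out by tracking the cyclic order around the $(k+2)$-valent vertex of $S$ as one reads block $A$, crosses $\gamma$, reads block $B$, and returns.
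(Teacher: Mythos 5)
Your final argument is exactly the paper's proof: cut along the separating arc, cap each block with a fresh oppositely-colored pair $\{m,-m\}$ (available since $g\ge k+3$), and note that because the arc avoids all chords there is no opposite-colored pair between the two blocks, so the bracket of the two capped spiders has the single term $\pm S$. The preliminary worry about error terms and induction on multiplicity is unnecessary, and the claim of ``at least three'' fresh colors should be ``at least one,'' but neither affects the correctness of the argument you settle on.
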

\begin{proof}
Cut the chord diagram $C(S)$ by an arc separating it and 
for each region glue the two endpoints of the piece of the outer circle. 
We put vertices for the identified points and give them 
colors with opposite sign that are distinct from those possessed by $C(S)$, 
which is possible by the assumption $g \ge k+3$. 
The new chord diagrams $C_1$ and $C_2$ satisfy 
$[C_1,C_2]=C(S)$ (see Figure \ref{fig:separable} as an example). 
\end{proof}

\begin{figure}[htbp]
\begin{center}
\includegraphics[width=0.7\textwidth]{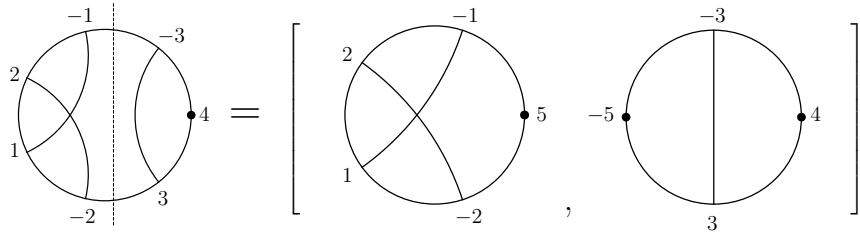}
\end{center}
\caption{A separable chord diagram}
\label{fig:separable}
\end{figure}

In the proof of Theorem \ref{thm:symp}, the following specific form of chord diagrams 
plays a key role. 
\begin{definition}\label{def:standard_form}
A chord diagram $C$ is said to be of the {\it standard form} if 
it corresponds to one of the following spiders
\begin{itemize}
\item $S(c_1, c_2, -c_1, c_3, -c_2, c_4, -c_3, c_5, -c_4, c_6, -c_5, \ldots, c_m,
-c_{m-1}, d_1, 
-c_m, d_2)$, 
\item $S(c_1, c_2, -c_1, c_3, -c_2, c_4, -c_3, c_5, -c_4, c_6, -c_5, \ldots, c_m,
-c_{m-1}, d_1, 
-c_m)$,
\item $S(c_1, c_2, -c_1, c_3, -c_2, c_4, -c_3, c_5, -c_4, c_6, -c_5, \ldots, c_m,
-c_{m-1},-c_m, d_2)$,
\item $S(c_1, c_2, -c_1, c_3, -c_2, c_4, -c_3, c_5, -c_4, c_6, -c_5, \ldots, c_m,
-c_{m-1},-c_m)$,
\end{itemize}
\noindent
where the colors $\pm c_1, \pm c_2, \ldots, \pm c_{m}, d_1, d_2$ 
(some of $c_i,d_1, d_2$ might be negative) 
are mutually distinct. In particular, 
$C$ does not have multiple paired vertices.  
Diagrammatically, a chord diagram of the standard form is given as in 
Figure \ref{fig:standard}. 
\end{definition}

\begin{figure}[htbp]
\begin{center}
\includegraphics[width=0.3\textwidth]{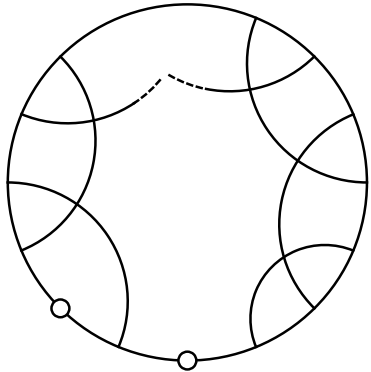}
\end{center}
\caption{The standard form, where {\it each of white vertices might not exist\/}}
\label{fig:standard}
\end{figure}

\begin{lem}\label{lem:standard_form}
If $g \ge k+3 \ge 6$, the quotient 
$\ag(k)/\displaystyle\sum_{\begin{subarray}{c}
i+j =k \\ i,j \ge 1
\end{subarray}} [\ag(i),\ag(j)]$ is 
generated by spiders corresponding to chord diagrams of the standard form. 
\end{lem}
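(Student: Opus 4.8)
The plan is to reduce an arbitrary spider $S\in\ag(k)=(H_\Q^{\otimes(k+2)})^{\Z/(k+2)\Z}$, modulo the ideal sum $\sum_{i+j=k,\ i,j\ge 1}[\ag(i),\ag(j)]$, to a linear combination of spiders of the standard form. I would do this by a double induction: an outer induction on the multiplicity $m(C(S))$ of Definition \ref{def:multiplicity}, and, once the multiplicity is zero, an inner induction on a complexity measure of the chord diagram that vanishes exactly on the standard forms. The basic move throughout is to \emph{cut a spider into a bracket}: given a splitting of the cyclic leg sequence of $S$ into two arcs $A$ and $B$ with $2\le|A|$ and $2\le|B|$, pick a colour $d$ with neither $d$ nor $-d$ among the colours of $S$ (possible precisely because $g\ge k+3$), and set $S_1=S(A,d)$ and $S_2=S(-d,B)$ (the spiders whose legs are those of $A$, resp.\ $B$, in order, with an extra leg coloured $d$, resp.\ $-d$). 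These lie in $\ag(p)$ and $\ag(q)$ with $p,q\ge 1$ and $p+q=k$. By the bracket formula of Figure \ref{fig:spider_bracket}, $[S_1,S_2]$ is a sum over pairs of opposite-coloured legs, one from $S_1$ and one from $S_2$; the pair $(d,-d)$ contributes $\pm S$, while every other contribution comes from contracting a chord of $C(S)$ that crosses the cut. Since, as noted just after Definition \ref{def:multiplicity}, the multiplicity depends only on the multiset of colours, contracting a chord whose colour $c$ occurs at $p$ vertices and whose colour $-c$ occurs at $q$ vertices produces an error term of multiplicity $m(C(S))-2(p+q-2)$; thus every error term has multiplicity $\le m(C(S))$, with equality exactly when the contracted chord is single paired.

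Step 1: \emph{reducing to multiplicity zero.} Suppose $m(C(S))>0$; then $C(S)$ has at least two non-single-paired vertices, and I would choose the cut so that $A$ is \emph{saturated} for single-paired chords, i.e.\ contains both ends of every single-paired chord it meets (for instance, start from a non-single-paired vertex and keep adjoining the partner of any single-paired end that has been swallowed, stopping while $2\le|A|\le k$, which one checks is always possible). Then no single-paired chord crosses the cut, so every error term of $[S_1,S_2]$ arises by contracting a multiple-paired chord and hence has strictly smaller multiplicity. Therefore $S$ is congruent modulo $\sum_{i+j=k,\ i,j\ge 1}[\ag(i),\ag(j)]$ to a combination of spiders of smaller multiplicity, and the outer induction closes.

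Step 2: \emph{the multiplicity-zero case.} Now $C(S)$ consists of single-paired chords and unpaired vertices only. If $C(S)$ is separable, Lemma \ref{lem:separable} finishes it, so assume it is not. I would first record the structural consequences of non-separability: two unpaired vertices can never be adjacent, a chord crossing no other chord always allows a separation, and two ``clusters'' of mutually non-crossing chords always separate; hence in the non-separable case the crossing graph of the chords is connected, every chord crosses another, and there are at most two unpaired vertices — in other words $C(S)$ is already of the standard form of Definition \ref{def:standard_form} \emph{once its crossing graph is a path}, the (at most two) unpaired legs being forced to the ends as in Figure \ref{fig:standard}. If the crossing graph is not a path — it has a vertex of valence $\ge 3$ or a cycle — I would apply a further cut chosen so that each error term in $[S_1,S_2]$ is either separable, or has positive multiplicity (handled by Step 1), or has a strictly simpler crossing graph; an inner induction on the appropriate complexity then reduces to the path case and completes the proof.

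The hard part will be Step 2: pinning down the structural lemmas about non-separable multiplicity-zero diagrams (so that ``crossing graph a path'' genuinely coincides with the standard form of Definition \ref{def:standard_form}), and, above all, producing, for a diagram whose crossing graph is not a path, a cut whose error terms are \emph{all} strictly simpler in the chosen complexity, so that the inner induction is well founded — this is where a careful choice of cut location relative to the crossing pattern, together with somewhat involved bookkeeping with the bracket formula of Figure \ref{fig:spider_bracket}, is required. By contrast, Step 1 is essentially forced, since the multiplicity is an invariant of the colour multiset alone.
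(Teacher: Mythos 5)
There is a genuine gap, and it occurs at the very first step. Your Step 1 relies on the existence of a cut such that no single-paired chord crosses it (with both sides of size at least $2$), and you assert that the greedy ``saturation'' procedure always terminates with $2\le |A|\le k$. It does not. Take $k=5$ and the spider $S(1,2,-1,3,1,-2,-3)\in\ag(5)$: the colour $\pm1$ occupies positions $1,3,5$ and forms a multiple-paired cluster (so $m(C)=2\cdot4-7=1>0$), while $\pm2$ at positions $2,6$ and $\pm3$ at positions $4,7$ are single paired. One checks directly that \emph{every} arc of between $2$ and $5$ consecutive legs splits at least one of the chords $\{2,6\}$, $\{4,7\}$ (the only arcs splitting neither are the singletons $\{1\},\{3\},\{5\}$ and arcs of length $\ge 6$). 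So no admissible cut avoids all single-paired chords, some error term of any cut is obtained by contracting a single-paired chord and therefore has the \emph{same} multiplicity, and your outer induction on $m(C)$ does not close. Step 2 has a further problem: the claim that a non-separable multiplicity-zero diagram whose crossing graph is a path is automatically of the standard form is false. The spider $S(c_1,c_2,-c_1,d,c_3,-c_2,d',-c_3)$ (for $k=6$, with $d,d'$ unpaired) is non-separable, has multiplicity zero and a path as crossing graph, yet is not among the four shapes of Definition \ref{def:standard_form}, because the standard form constrains \emph{where} the unpaired vertices sit. Finally, the heart of Step 2 --- a cut for non-path crossing graphs all of whose error terms are strictly simpler --- is exactly the part you leave open, and it interacts badly with the failure of Step 1: an error term that reacquires positive multiplicity must be fed back through Step 1, after which there is no control on its crossing graph, so the proposed double induction has no evident well-founded measure.

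For comparison, the paper avoids both problems by \emph{not} first reducing the multiplicity to zero. It introduces the configurations $\mathcal{F}_l$ (a chain of $l$ single-paired chords in the standard interleaved pattern, followed by arbitrary words $X,Y$) and shows that a diagram with configuration $\mathcal{F}_l$ is, modulo brackets, a combination of diagrams that either have configuration $\mathcal{F}_{l+1}$ with multiplicity $\le m(C)$, or are separable, or (in the one backward case III-c) have configuration $\mathcal{F}_{l-1}$ but \emph{strictly smaller} multiplicity. The primary measure is thus the length of the standard chain already built, with multiplicity only bounding the number of backward steps; this lexicographic scheme is precisely what your plan is missing. If you want to salvage your approach, you would need to replace Step 1 by an argument that tolerates contracting single-paired chords --- for instance by tracking, as the paper does, how much of the standard chain survives each cut --- at which point you are essentially reconstructing the paper's induction.
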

\begin{proof}
It suffices to exhibit an algorithm by which 
a given chord diagram $C(S)$ corresponding to a spider $S$ 
is rewritten modulo brackets as a linear combination of chord diagrams of 
the standard form. 

Suppose we are given a chord diagram $C$ 
corresponding to a spider with multiplicity $m(C)$. 
We may assume that $C$ is not separable. 

If $C$ does not have a single paired chord, we take 
two adjacent vertices. 
By using the colors $i$, $j$ of these vertices, we can write 
$C=C(S(i,j,X))$ for some word $X$ of colors with length bigger 
than $2$. Then we have 
\begin{align*}
S(i,j,X) &= [S(X,n),\ S(-n,i,j)]\\
&\quad +\sum_{\tiny\begin{subarray}{c}
\mbox{color $-i$}\\
\mbox{in $X$}
\end{subarray}} \pm S(n,Z_1,j,-n,Z_2) 
+\sum_{\tiny\begin{subarray}{c}
\mbox{color $-j$}\\
\mbox{in $X$}
\end{subarray}} \pm S(n,Z_1,-n,i,Z_2) 
\end{align*}
where $n>0$ and $-n$ are colors not possessed by $C$, and 
$Z_1$, $Z_2$ are some words. 
While the words $Z_1$, $Z_2$ differ in 
each term of the summation, precisely speaking, we 
use the same letters here for simplicity. 
In the right hand side, each of $S(n,Z_1,j,-n,Z_2)$ 
and $S(n,Z_1,j,-n,Z_2)$ has a single paired color $n$ and 
has multiplicity not bigger than $m(C)$. 

Define a chord diagram $C$ {\it having 
the configuration $\mathcal{F}_l$} ($l=1,2,\ldots)$
to be the one corresponding to a spider 
\[S(c_1, c_2, -c_1, c_3, -c_2, \ldots,  c_l,
-c_{l-1}, X, -c_l, Y),\]
where colors $\pm c_1, \pm c_2, \ldots, \pm c_l$ 
are mutually distinct, 
and $X$, $Y$ are words (which might be empty) having no 
colors $\pm c_1, \pm c_2, \ldots, \pm c_l$. 
Diagrammatically, a chord diagram $C$ having 
the configuration $\mathcal{F}_l$ is given as in Figure \ref{fig:induction}. 
Now we inductively show that a chord diagram $C$ having 
 the configuration $\mathcal{F}_l$ can be written as 
a linear combination of 
chord diagrams having the configuration $\mathcal{F}_{l+1}$ 
and having 
multiplicities not bigger than $m(C)$ modulo 
brackets unless it is already of the standard form. 

\begin{figure}[htbp]
\begin{center}
\includegraphics[width=0.4\textwidth]{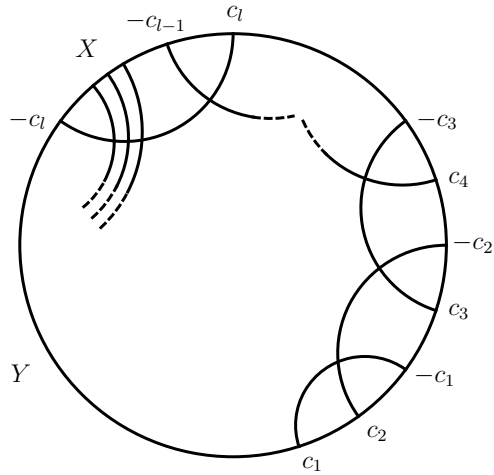}
\end{center}
\caption{The configuration $\mathcal{F}_l$}
\label{fig:induction}
\end{figure}

(The first step) 
By the argument in the third paragraph of this proof, we may assume 
that the chord diagram $C$ has at least one single paired chord 
colored by $\pm c_1$. 
Let $X$ and $Y$ be the regions separated by the single paired chord 
so that the diagram $C$ corresponds to the spider 
$S(c_1,X,-c_1,Y)$. 

If $X$ or $Y$ has no vertices, then $C$ is separable and we are done. 

If $X$ has at least two vertices, we have 
\begin{align*}
S(c_1,X,-c_1,Y) &= [S(c_1,n,-c_1,Y),\ S(-n,X)]\\
&\quad +\sum_{\tiny\begin{subarray}{c}
\mbox{pairing of}\\
\mbox{$X$ and $Y$}
\end{subarray}} \pm S(c_1,n,-c_1,Z_1,-n,Z_2)
\end{align*}
where $n>0$ and $-n$ are new colors as before 
(hereafter we omit these words about the new color $n$). 
Each of the spiders $S(c_1,n,-c_1,Z_1,-n,Z_2)$ 
has the configuration $\mathcal{F}_2$ with $c_2=n$ and multiplicity 
not bigger than $m(C)$. 

If $X$ has only one vertex $v$, then 
$Y$ has at least 
two vertices since $k \ge 3$ and 
the diagram $C$ corresponds to the spider 
$S(c_1,c_v,-c_1,Y)$, where $c_v$ is the color of $v$. 
In this case, there are three possibilities:
\begin{itemize} 
\item[(a)] If $v$ is unpaired, then it is separable. 
\item[(b)] If $v$ is single paired, then $C$ has the configuration 
$\mathcal{F}_2$ with $c_2=c_v$. 
\item[(c)] If $v$ is multiple paired, consider the equality
\begin{align*}
S(c_1,c_v,-c_1,Y) &= [S(c_1,c_v,-c_1,n),\ S(-n,Y)]\\
&\quad +\sum_{\tiny\begin{subarray}{c}
\mbox{color $-c_v$}\\
\mbox{in $Y$}
\end{subarray}} \pm S(-c_1,n,c_1,Z_1,-n,Z_2).
\end{align*}
Each of the spiders $S(-c_1,n,c_1,Z_1,-n,Z_2)$ 
has the configuration $\mathcal{F}_2$ and multiplicity 
less than $m(C)$ since a pair of multiple paired 
vertices colored by $\pm c_v$ was exchanged for single 
paired vertices colored by $\pm n$. 
\end{itemize}

In any case, we have checked that we can proceed to the next step.

\bigskip
(The inductive step) 
Suppose that any chord diagram having 
the configuration $\mathcal{F}_i$ $(i=1,2,3,\ldots,l-1)$ is written as 
a linear combination of chord diagrams 
having the configuration $\mathcal{F}_{i+1}$ and having 
multiplicities not bigger than $m(C)$. Let $C$ be a chord diagram 
having the configuration $\mathcal{F}_l$ as in Figure 
\ref{fig:standard}, where 
$X$ is the region between the vertices colored by $-c_{l-1}$ and $-c_{l}$ 
and $Y$ is the region between the vertices colored by $-c_l$ and $c_1$. 
\begin{itemize}
\item[(I)] Suppose that $X$ has no vertices. If $Y$ has at most one vertex, 
the diagram $C$ is of standard form. Otherwise, $Y$ has at least two 
vertices. Therefore $C$ is separable. 

\item[(II)] Suppose that $X$ has at least two vertices. Then $C$ 
corresponds to the spider 
$S(c_1,c_2,-c_1,c_3,\ldots, c_l, -c_{l-1},X,-c_l,Y)$. 
Consider the equality
\begin{align*}
\hspace{5pt}\lefteqn{S(c_1,c_2,-c_1,c_3,\ldots, c_l, 
-c_{l-1},X,-c_l,Y)}\hspace{15pt}\\
&=[S(c_1,c_2,-c_1,c_3,\ldots, c_l, -c_{l-1},n,-c_l,Y),S(-n,X)]\\
&\quad +\sum_{\tiny\begin{subarray}{c}
\mbox{pairing of}\\
\mbox{$X$ and $Y$}
\end{subarray}} \pm 
S(c_1,c_2,-c_1,c_3,\ldots, c_l, -c_{l-1},n,-c_l,Z_1,-n,Z_2).
\end{align*}
\noindent
Each of the spiders 
$S(c_1,c_2,-c_1,c_3,\ldots, c_l, -c_{l-1},n,-c_l,Z_1,-n,Z_2)$ 
has the configuration $\mathcal{F}_{l+1}$ and multiplicity 
not bigger than $m(C)$. 

\item[(III)] Suppose that $X$ has only one vertex $v$. Let $c_v$ be 
the color of $v$. 

\begin{itemize}
\item[\underline{III-a}] Suppose that $v$ is unpaired. 
If $Y$ has at most one vertex, 
then $C$ is of standard form. Otherwise, $C$ is separable. 

\item[\underline{III-b}] If $v$ is single paired, $C$ has 
the configuration of $\mathcal{F}_{l+1}$. 

\item[\underline{III-c}] If $v$ is multiple paired, 
then $Y$ has at least two vertices. Consider the equality

\begin{align*}
\hspace{5pt}\lefteqn{S(c_1,c_2,-c_1,c_3,\ldots, 
c_l, -c_{l-1},c_v,-c_l,Y)}\hspace{15pt}\\
&=[S(c_1,c_2,-c_1,c_3,\ldots, c_l, -c_{l-1},c_v,-c_l,n),S(-n,Y)]\\
&\quad +\sum_{\tiny\begin{subarray}{c}
\mbox{color $-c_v$}\\
\mbox{in $Y$}\end{subarray}} 
\pm S(c_1,c_2,-c_1,c_3,\ldots, c_l, -c_{l-1},Z_1,-n,Z_2,-c_l,n).
\end{align*}
\noindent
Each of the spiders $S(c_1,c_2,-c_1,c_3,\ldots, c_l, 
-c_{l-1},Z_1,-n,Z_2,-c_l,n)$ 
has the configuration $\mathcal{F}_{l-1}$, namely we have stepped backward. 
However their multiplicity are 
less than $m(C)$ since a pair of multiple paired 
vertices colored by $\pm c_v$ was exchanged for single 
paired vertices colored by $\pm n$. Hence in repeating this rewriting process, 
we meet this case at most $m(C)$ times and 
we can finally go to the next step. 
\end{itemize}
\end{itemize}

Therefore the induction works and we finish the proof. 
\end{proof}

Next we introduce {\it chord slides\/} for chord diagrams to show 
that any chord diagram of the standard form is 
in $\Im [\,\cdot\,,\,\cdot\,]$. 
Hereafter we assume that 
all chord diagrams have no multiple paired vertices. 
Consider spiders having two adjacent vertices 
colored by $i$ and $j$, which might be negative. 
Suppose first that both $i$ and $j$ are single paired. 
Then we have the following equalities: 
\begin{align*}
S(X,i,j,Y,-j,Z,-i)&=\mathrm{sign}(n)[S(X,n,Y,-j,Z,-i),S(i,j,-n)]\\
&\quad +\mathrm{sign}(ni) S(X,n,Y,-j,Z,j,-n)\\
&\quad +\mathrm{sign}(nj) S(X,n,Y,-n,i,Z,-i),\\
S(X,i,j,Y,-i,Z,-j)&=\mathrm{sign}(n)[S(X,n,Y,-i,Z,-j),S(i,j,-n)]\\
&\quad +\mathrm{sign}(ni) S(X,n,Y,-i,Z,-n,i) \\
&\quad +\mathrm{sign}(nj) S(X,n,Y,j,-n,Z,-j),
\end{align*}
where $\mathrm{sign}(m) \in \{\pm 1\}$ denotes the sign of 
an integer $m \neq 0$. 
These equalities are diagrammatically 
expressed (up to sign) as in the first two equalities of 
Figure \ref{fig:chord_slide}, 
which look like ``chord slides to two directions''. 
Next suppose that $i$ is single paired and $j$ is unpaired. 
Then we have
\[S(X,i,j,Y,-i)=\mathrm{sign}(n)[S(X,n,Y,-i),S(i,j,-n)]
+\mathrm{sign}(ni) S(X,n,Y,j,-n).\]
This equality is diagrammatically 
expressed (up to sign) as in the last equality of 
Figure \ref{fig:chord_slide}. Note that in every case of the above, 
the color of the edge on which another chord slides 
changes after a chord slide. 

\begin{figure}[htbp]
\begin{center}
\includegraphics[width=0.6\textwidth]{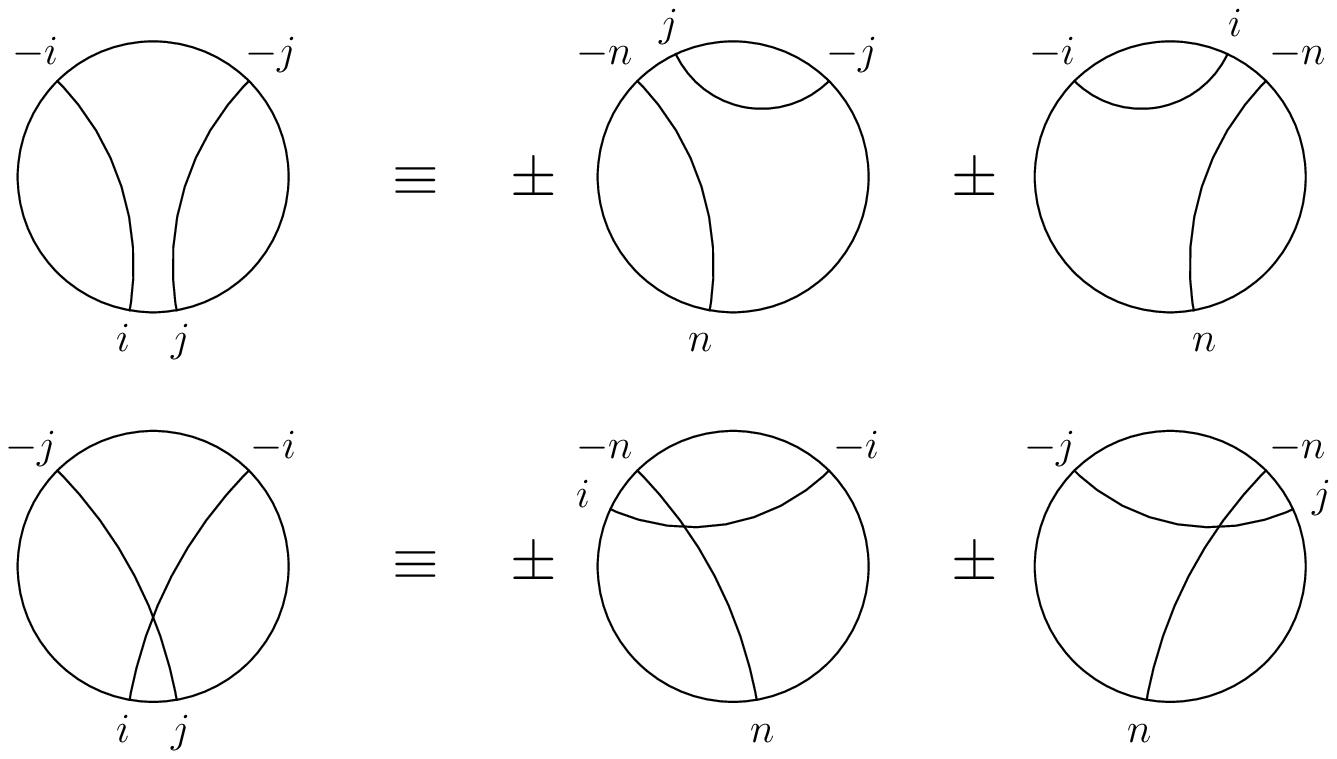}

\bigskip
\includegraphics[width=0.4\textwidth]{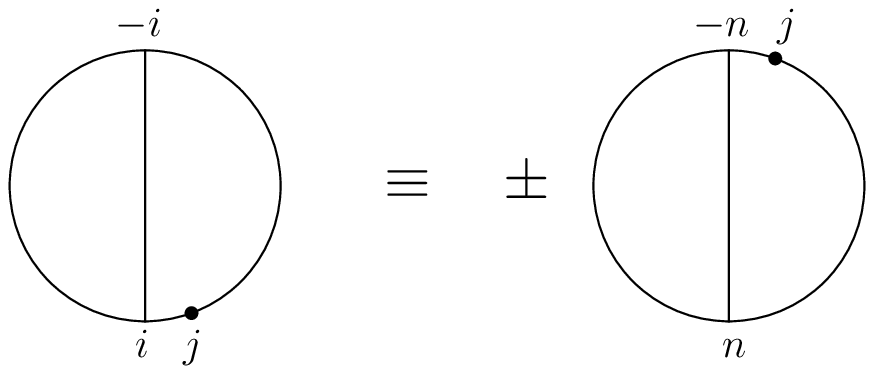}
\end{center}
\caption{Chord slides}
\label{fig:chord_slide}
\end{figure}

In using chord slides, the following observation is easy but important. 
Let $\Sigma$ be a surface obtained from a chord diagram of the 
standard form with $l \ge 2$ single paired chords by fattening, 
where we ignore the crossings inside the outer circle. 
The boundary $\partial \Sigma$ of $\Sigma$ contains 
the outer circle and we call 
the other components of $\partial \Sigma$ the {\it inner boundary}. 
\begin{lem}
The inner boundary of $\Sigma$ is connected if $l$ is even, and 
consists of two connected components if $l$ is odd. 
\end{lem}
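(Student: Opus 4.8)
The plan is to make the surface $\Sigma$ explicit, identify its inner boundary with the boundary of a simpler surface, and then count boundary circles by a direct trace, using the Euler characteristic only to fix the parity.

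First I would describe $\Sigma$ precisely. Fattening the chord diagram of the standard form, and ignoring the unpaired white vertices (they carry no chords and merely subdivide arcs), produces the regular neighborhood of the outer circle together with the $l$ single paired chords; that is, an annulus $A$ with $l$ bands attached to one edge $C$ of $A$, each band attached in the orientation-preserving way so that $\Sigma$ is an orientable surface. The other edge of $A$ is the outer circle of $\Sigma$, a single boundary component, and the inner boundary is by definition the union of all the remaining boundary circles. Each band attachment drops the Euler characteristic by one, so $\chi(\Sigma)=\chi(A)-l=-l$. Since $\Sigma$ is connected and orientable, writing $b$ for the total number of boundary components and $h$ for the genus we get $2-2h-b=-l$, hence $b\equiv l\pmod 2$; thus the number $b-1$ of inner boundary components has the parity of $l+1$, which already matches the statement ($1$ when $l$ is even, $2$ when $l$ is odd). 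It remains to rule out $b-1\ge 3$, i.e.\ to show $h=\lfloor l/2\rfloor$ exactly.

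Next I would pin down $b$ by a direct boundary trace. Label the $2l$ band feet $1,2,\ldots,2l$ in cyclic order along $C$; reading off the standard form, the chords pair them according to the ``staircase''
\[
c_1,\, c_2,\, -c_1,\, c_3,\, -c_2,\, c_4,\, -c_3,\, \ldots,\, c_l,\, -c_{l-1},\, -c_l,
\]
so that consecutive chords are interlinked and non-consecutive ones are not (the intersection pattern is the path on $l$ vertices). Let $P$ be the resulting fixed-point-free involution of $\Z/2l\Z$ and let $\rho$ be the cyclic successor $i\mapsto i+1$. Walking along an inner boundary curve alternately traverses an arc of $C$ and crosses a band, so with the orientation-preserving convention the inner boundary components are in bijection with the cycles of the permutation obtained by composing $P$ with $\rho$ (the precise order of composition being dictated by the chosen orientation). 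One then checks by induction on $l$, with the step ``append one further chord, interlinked with the current last one, to the end of the chain'', that this permutation has exactly one cycle when $l$ is even and exactly two when $l$ is odd; the base cases $l=2$ (one cycle) and $l=3$ (two cycles) are immediate, and each step toggles the count, in agreement with the parity found above. Intuitively, consecutive pairs of chords along the chain account for the handles of $\Sigma$ (raising $h$ by one without changing $b$), while for $l$ odd the leftover chord splits off a second inner circle.

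The main obstacle is exactly this last combinatorial step — obtaining the precise boundary count rather than only its parity. It requires fixing once and for all the framing/orientation convention for attaching the bands and carrying it consistently through the trace, and strengthening the induction hypothesis slightly (for $l$ odd, that the two sides of the last band lie on the two different inner components), after which the inductive step is routine. The white vertices need no further work: each merely subdivides an arc of $C$ without the annulus changing sides, so it cannot alter which boundary curve an arc lies on; and the three degenerate variants of the standard form (those missing $d_1$ and/or $d_2$) are treated identically.
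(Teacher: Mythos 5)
Your proof is correct and follows essentially the same route as the paper's: an induction on the number of chords whose step adds one new chord (interlinked only with the previous one) and tracks how the band attachment merges or splits the inner boundary circles. The paper states this in two sentences and leaves the details to the reader; your Euler-characteristic parity check and the strengthened hypothesis for odd $l$ are exactly the details needed to make that induction rigorous.
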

\begin{proof}
It is easy to see that the statement holds for $l=2$. Then we can 
inductively check that the statement holds for general cases 
by comparing the connection of the boundary before and 
after adding a new chord.
\end{proof}

\begin{proof}[Proof of Theorem $\ref{thm:symp}$ when $k \equiv 0 \pmod 4$] 
There are two patterns of the standard form. 
The first one consists of two unpaired vertices and an even number of 
single paired chords. In this case, we can slide the unpaired vertices 
so that they are adjacent, which is possible because the inner boundary of 
the fattened surface is connected. Then the chord diagram becomes separable. 

The second pattern consists of no unpaired vertex and an odd number of 
single paired chords. To treat this pattern, we 
consider a chord diagram having 
the configuration of the standard form with one more chord $l$ 
intersecting with 
the others at one point as in the left hand side of Figure \ref{fig:cycle1}. 
For such a diagram, we can move the intersecting point by a chord slide 
as shown in the same figure, 
where the second diagram of the right hand side is 
separable. 

\begin{figure}[htbp]
\begin{center}
\includegraphics[width=0.7\textwidth]{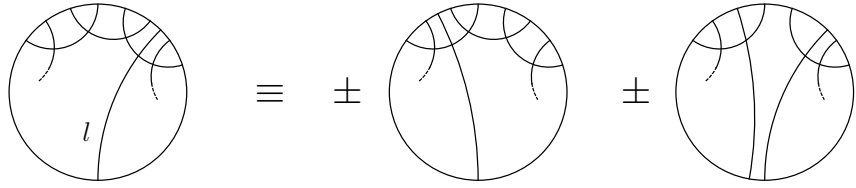}
\end{center}
\caption{Sliding the intersection}
\label{fig:cycle1}
\end{figure}

Now take a chord diagram of the standard form consisting of 
$(k+2)/2$ chords as in 
the left hand side of Figure \ref{fig:cycle2}. 
We may consider it to be a diagram of 
the standard form consisting of 
$k/2$ chords with one more chord colored by $\pm c_1$. 
To this diagram, we apply the chord slide discussed above 
with regarding the chord colored by $\pm c_1$ as $l$. 
By iterating chord slides, we get to the chord diagram of 
the right hand side of Figure \ref{fig:cycle2}, which 
is shown to be in $\Im [\,\cdot\,,\,\cdot\,]$ by considering 
the result of the chord slide at $\ast$ (see also the first line of 
Figure \ref{fig:chord_cyclings}). 
\end{proof}
\begin{figure}[htbp]
\begin{center}
\includegraphics[width=0.5\textwidth]{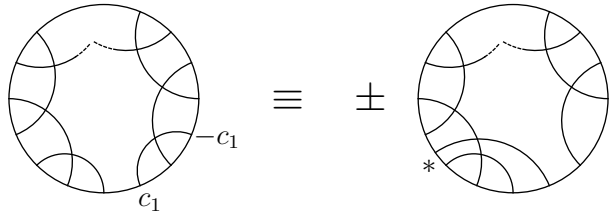}
\end{center}
\caption{The right hand side is the final stage of a chord cycling}
\label{fig:cycle2}
\end{figure}

Hereafter we call the operation used in 
the second pattern of the above (i.e. moving the chord colored by 
$\pm c_1$ from right to left) a {\it chord cycling}. 
\begin{figure}[htbp]
\begin{center}
\includegraphics[width=0.7\textwidth]{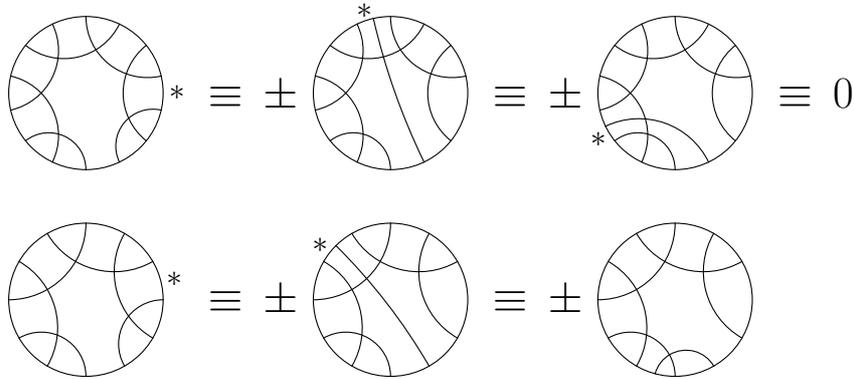}
\end{center}
\caption{Chord cyclings for odd and even numbers of chords}
\label{fig:chord_cyclings}
\end{figure}

\begin{proof}[Proof of Theorem $\ref{thm:symp}$ when $k \equiv 1 \pmod 4$] 
In this case, the standard form consists of a unique unpaired vertex and 
an odd number of single paired chords. Then by a chord cycling 
with ignoring the unpaired vertex, we can slide 
the diagram to a separable one. 
\end{proof}

In the remaining two cases, 
we can apply the same argument as above only to 
chord diagrams of the standard form consisting of two unpaired vertices 
and an odd number of single paired chords, when $k \equiv 2 \pmod 4$. 
Therefore we can finish the proof of Theorem \ref{thm:symp} by considering 
the following two types of chord diagrams (see Figure \ref{fig:typeAB}): 
\begin{itemize}
\item[(a)] chord diagrams of the standard form consisting of 
no unpaired vertices and $(2l+2)$ single paired chords, where 
$k =4l+2$,

\item[(b)] chord diagrams of the standard form consisting of 
one unpaired vertex and $(2l+2)$ single paired chords, where 
$k = 4l+3$.

\begin{figure}[htbp]
\begin{center}
\includegraphics[width=0.7\textwidth]{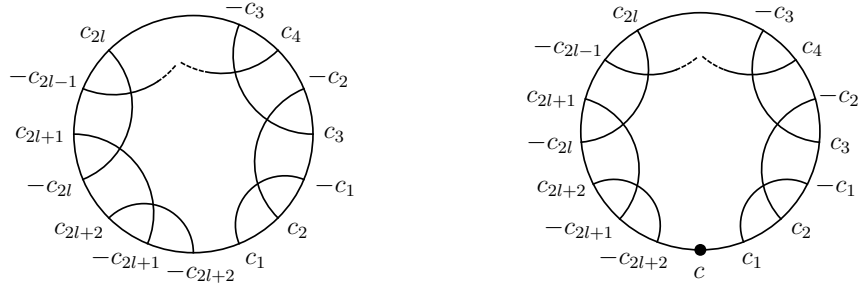}
\end{center}
\caption{Type (a) and Type (b)}
\label{fig:typeAB}
\end{figure}
\end{itemize}
\noindent
For each of them, a chord cycling 
results to another chord diagram of the standard form with distinct colors 
(see the second line of Figure \ref{fig:chord_cyclings}). 
\begin{lem}\label{lem:strong_form}
Under the assumption $g \ge k+3 \ge 6$, we have the following.

$(1)$ Every chord diagram of Type $\mathrm{(a)}$ 
shown in the left of Figure $\ref{fig:typeAB}$ 
is transformed up to sign to the 
one with $c_i=i$ for $i=1,2,\ldots,2l+2$ by chord slides.

$(2)$ Let $C$ be a chord diagram of Type $\mathrm{(b)}$ 
whose unique unpaired vertex 
is colored by $c$ as shown in the right of 
Figure $\ref{fig:typeAB}$. Then for any fixed colors 
$\{d_1, d_2, \ldots, d_{2l+2}\}$ consisting of mutually distinct 
positive integers and 
not including $\pm c$, the diagram $C$ is 
transformed up to sign to the one with $c_i=d_i$ 
for $i=1,2,\ldots,2l+2$ by chord slides.
\end{lem}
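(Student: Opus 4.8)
\textit{Proof proposal for Lemma \ref{lem:strong_form}.}

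The plan is to prove both parts by iterating one ``recoloring'' operation built from chord slides, discarding separable intermediate diagrams via Lemma \ref{lem:separable}; all equalities below are understood modulo $\sum_{i+j=k,\ i,j\ge 1}[\ag(i),\ag(j)]$. The starting point is the observation recorded just before the lemma: a chord cycling applied to a standard-form diagram of Type (a) or Type (b) produces, after deleting the separable left-over terms and the brackets, another diagram of the \emph{same} standard form. I would first make precise what this does to colors. Each chord slide occurring in the cycling introduces a fresh color $n$ --- any magnitude not already present, of which at least one exists thanks to the slack $g\ge k+3$ --- and after the full cycling the chord that has been pushed around the outer circle carries a fresh color of our choosing while the remaining chord colors are retained, their roles in the standard form being cyclically shifted (in Type (b) the unique unpaired vertex, which the moving chord never touches, keeps its color $c$). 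Thus one chord cycling trades one chord color for a freely chosen new one; running it $2l+2$ times then replaces the entire set of chord colors by any prescribed $2l+2$ magnitudes, installed in any prescribed cyclic order (Type (a), which is cyclically symmetric) or linear order (Type (b), where the unpaired vertex breaks the cyclic symmetry and ``$c_i=d_i$'' is an ordered condition).

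Granting this, part (1) follows by running the recoloring $2l+2$ times with new colors $1,2,\dots,2l+2$ taken in the order dictated by the standard form, and part (2) follows identically with new colors $d_1,d_2,\dots,d_{2l+2}$. Two routine points remain. First, at each step one needs a new color avoiding everything currently in use \emph{and} avoiding the target set: a standard-form diagram of Type (a) (resp.\ (b)) uses at most $2l+2$ (resp.\ $2l+3$) magnitudes and the target set has $2l+2$, so at most $4l+4$ (resp.\ $4l+5$) magnitudes are forbidden, whereas $g\ge k+3$ supplies $g\ge 4l+5$ (resp.\ $g\ge 4l+6$) magnitudes --- exactly enough. If a target color collides with a currently used one, one first carries out a preliminary round of $2l+2$ recolorings moving all chord colors off the target set (and off $\pm c$ in Type (b)), using the same count. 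Second, each recoloring is a composition of chord slides, so every equality holds only up to an overall sign; this is precisely the content of ``transformed up to sign,'' and no sign needs to be tracked.

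The main obstacle is the precise analysis of the single recoloring operation: one must check that pushing one chord once around the outer circle of a Type (a) or Type (b) standard form, one slot at a time via the chord-slide identities, leaves at each stage only separable left-over spiders (so that Lemma \ref{lem:separable} kills them --- this is where the extra room $g\ge k+3\ge 6$ is spent, on the auxiliary colors needed to separate), and that the spider one continues with is again of the standard form with only the color of the cycled chord replaced. The parity of the number of single paired chords governs, through the connectivity of the inner boundary of the fattened surface (the inner-boundary lemma above), whether and how the moving chord can be routed so that the intermediate diagrams separate; Types (a) and (b) are singled out exactly because there the cycling does not terminate at a separable diagram, so one must instead control the standard form it lands on and the induced recoloring. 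Once this is settled, the remaining content --- the iteration and the color count --- is routine.
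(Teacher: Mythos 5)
Your overall strategy---iterate chord cyclings, use the slack $g\ge k+3$ to supply fresh colors, and discard separable leftover terms via Lemma \ref{lem:separable}---is the same as the paper's. But the step you defer as ``the main obstacle'' is in fact the entire content of the lemma, and the description you give of it is not what the chord-slide identities actually produce. You assert that one chord cycling ``trades one chord color for a freely chosen new one'' while ``the remaining chord colors are retained,'' so that $2l+2$ cyclings recolor everything. The chord-slide identities do the opposite: in each slide it is the chord being slid \emph{over} whose color is replaced by the fresh color $n$ (the paper notes explicitly that ``the color of the edge on which another chord slides changes after a chord slide''), while the moving chord keeps its own color. Consequently one full cycling of a Type (a) diagram sends the color sequence $[c_1,c_2,\ldots,c_{2l+2}]$ to $[c_2,n_3,c_4,n_5,\ldots,n_{2l+1},c_{2l+2},c_1]$: roughly half the colors are replaced by fresh ones, the rest are cyclically shifted, and the moved chord still carries $c_1$. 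Recoloring is therefore not a one-chord-at-a-time operation, and the claim that the target colors can be ``installed in any prescribed cyclic order'' by your iteration does not follow from anything you have written.

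The paper's proof of part (1) consists precisely of the combinatorial bookkeeping you omit: an explicit chain of about eight cyclings in which the fresh colors are chosen at the right moments to coincide with the target colors $1,\ldots,2l+2$, tracked sequence by sequence until $[1,2,\ldots,2l+2]$ is reached. For part (2) the paper also proceeds differently from your sketch: rather than leaving the unpaired vertex untouched, it \emph{slides the unpaired vertex itself} along all the chords (possible because the inner boundary is connected for $2l+2$ chords), which is what changes the chord colors, and then argues as in (1). Your color-counting remarks ($4l+5$, resp.\ $4l+6$, magnitudes suffice) do match the paper, but without a correct analysis of what a single cycling does to the colors, the iteration you propose cannot be carried out as stated.
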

\begin{proof}
(1) Let $C$ be a chord diagram of Type (a) 
as in the left of Figure $\ref{fig:typeAB}$. 
We associate this diagram with 
the sequence $[c_1,c_2,\ldots,c_{2l+2}]$ of 
colors. 

By the assumption $g \ge 4l+5$, we have 
\[\{1,2,\ldots,g\}-
\{1,2,\ldots,2l+2, |c_1|, |c_2|, \ldots, |c_{2l+2}|\} 
\neq \emptyset.\]
This means that every time we apply a chord slide, 
we can choose an integer from 
this set as a new color, namely the integer $n$ of the formulas in 
Figure \ref{fig:chord_slide}. 
Taking account of this observation 
we can apply a chord cycling to $C$ so that the resulting 
chord diagram of the 
standard form is associated with the sequence 
\[[c_2, n_3, c_4, n_5, \ldots, n_{2l+1},c_{2l+2},c_1]\]
where $n_3, n_5,\ldots,n_{2l+1} \in 
\{1,2,\ldots,g\}-\{1,2,\ldots,2l+2\}$. 
By iterating chord cyclings, we obtain 
chord diagrams of the standard 
form associated with the sequences
\begin{align*}
&\qquad \, [c_2, n_3, c_4, n_5, \ldots, n_{2l+1},c_{2l+2},c_1] \\
&\longrightarrow 
[n_3, n_4, n_5, \ldots, n_{2l+1},n_{2l+2},c_1,c_2]\\
&\longrightarrow 
[n_4, n'_5, \ldots, n'_{2l+1},n_{2l+2},n_1,c_2,n_3]\\
&\longrightarrow 
[n'_5, n'_6,\ldots, n'_{2l+1},n'_{2l+2},n_1,n_2,n_3,n_4]\\
&\longrightarrow 
[n'_6, 2l+1,n'_8,n''_9, \ldots, n''_{2l+1}, 
n'_{2l+2},n''_1,n_2,n''_3,n_4,n'_5]\\
&\longrightarrow 
[2l+1,2l+2,n''_9, n''_{10},\ldots, n''_{2l+1},n''_{2l+2}, 
n''_1,n''_2,n''_3,n''_4,n'_5,n'_6]\\
&\longrightarrow 
[2l+2,1,n''_{10}, 3 , n''_{12}, 5, \ldots, 2l-4 ,n''_{2l+2}, 
2l-3, n''_2, 2l-2, n''_4,2l-1,n'_6,2l+1]\\
&\longrightarrow 
[1,2,3, \ldots, 2l+1, 2l+2]
\end{align*}
where the positive integers $n_i$, $n'_j$, $n''_k$ are taken 
from $\{1,2,\ldots,g\}-\{1,2,\ldots,2l+2\}$. 
Our claim follows from this. 
Note that the above argument works also for small $l$.

(2) Take a chord diagram of Type (b) 
shown in the right of Figure $\ref{fig:typeAB}$. 
Since the inner boundary is connected, we can slide the unique 
unpaired vertex along all chords so that 
the colors of the other vertices are changed as indicated. This is possible 
because the assumption $g \ge 4l+6$ implies that  
\[\{1,2,\ldots,g\}-
\{|c|,|c_1|,|c_2|,\ldots,|c_{2l+2}|, |d_1|, |d_2|, \ldots, |d_{2l+2}|\} 
\neq \emptyset,\]
which enables us to use an argument similar to (1).
\end{proof}

To show that the chord diagrams specialized in Lemma \ref{lem:strong_form} 
are in $\Im [\,\cdot\,,\,\cdot\,]$, we use the following 
mirror image argument. 
For a spider $S$, we define its {\it mirror} $S^m$ as 
the spider obtained from $S$ by 
sorting its legs in reverse order. 
In terms of chord diagrams, the chord diagram $C(S^m)$ is 
obtained from $C(S)$ by taking its mirror image. 
The following lemma is easily checked. 
\begin{lem}\label{lem:mirror}
For spiders $S_1$ and $S_2$, their bracket 
$[S_1^m,S_2^m]$ is obtained from $[S_1,S_2]$ by taking 
the mirror for each spider in it. 
\end{lem}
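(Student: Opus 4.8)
The plan is to read the statement off directly from the diagrammatic description of the bracket of spiders in Figure~\ref{fig:spider_bracket}, together with the fact that passing from a spider $S$ to its mirror $S^m$ reverses the cyclic order of its legs, i.e. reflects the planar picture of $S$. Thus the claim is precisely that the spider bracket commutes, term by term, with reflection of the plane.

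First I would record the bracket in the form displayed in Figure~\ref{fig:spider_bracket}: for $S_1=S(i_1,\dots,i_{p+2})\in\ag(p)$ and $S_2=S(j_1,\dots,j_{q+2})\in\ag(q)$, the bracket $[S_1,S_2]\in\ag(p+q)$ is a signed sum, over all pairs consisting of a leg of $S_1$ and a leg of $S_2$, of the spider obtained by joining $S_1$ and $S_2$ by a bridge connecting those two legs and then collapsing the bridge to the new $(p+q+2)$-valent vertex; the coefficient of such a term is $\mu$ evaluated on the colors of the two joined legs (so it vanishes unless the two colors differ by a sign). Two observations then suffice. First, this coefficient depends only on the colors of the two joined legs, not on their cyclic positions, hence it is unchanged when $(S_1,S_2)$ is replaced by $(S_1^m,S_2^m)$ and the corresponding pair of legs is used. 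Second, the join-and-collapse construction is manifestly equivariant under reflection of the plane: reflecting the two inputs and then joining and collapsing at the reflected pair of legs produces the reflection of what joining and collapsing the original inputs produces. Summing over all pairs of legs, and reindexing, yields $[S_1^m,S_2^m]=[S_1,S_2]^m$, where the mirror on the right is applied to each spider occurring in the linear combination.

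The single point I would make fully explicit --- and which is really the content of ``easily checked'' --- is the reflection-equivariance of join-and-collapse, which I would verify by elementary cyclic-word bookkeeping rather than by pointing at a picture. Reversing the cyclic order of $S_1$ sends its $s$-th leg to its $(p+3-s)$-th and reverses each of the two ``arms'' of remaining legs that get spliced into the new vertex; the same happens for $S_2$. Performing this substitution of indices in the cyclic word describing the joined spider, one checks that the resulting word is a cyclic rotation of the reverse of the word obtained by joining $S_1$ and $S_2$ without reflecting, hence represents the same element of $\ag$ as the mirror of the latter. This is routine and presents no genuine obstacle: the lemma is a formal consequence of the planarity of the spider bracket together with the reflection-invariance of the symplectic pairing of colors.
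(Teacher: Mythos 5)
Your argument is correct and is exactly the verification the paper leaves implicit (the paper offers no proof beyond ``easily checked'', relying on the diagrammatic bracket of Figure \ref{fig:spider_bracket}): the coefficient of each term depends only on the colors of the two contracted legs, and the join-and-collapse of cyclic words commutes with reversal of the cyclic order. Your explicit cyclic-word bookkeeping for the reflection-equivariance fills in the one point worth checking, so the proposal matches the paper's intended approach.
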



\begin{proof}[Proof of Theorem $\ref{thm:symp}$ when $k \equiv 2 \pmod 4$] 
There are two patterns of the standard form. 
The first one consists of two unpaired vertices and an odd number of 
single paired chords. In this case, we can use chord cyclings with 
ignoring the unpaired vertices to show that 
the chord diagram is in $\Im [\,\cdot\,,\,\cdot\,]$ 
as in the cases where $k \equiv 0,1 \pmod 4$. 

The second one is of Type (a), where $k=4l+2$. 
By Lemma \ref{lem:strong_form}, it suffices to 
show that the spider 
\[\widetilde{S}=S(1,2,-1,3,-2,4,\ldots, -2l,2l+2,-(2l+1),-(2l+2))\]
is in $\Im [\,\cdot\,,\,\cdot\,]$. For that, we 
``divide'' the corresponding chord diagram at the center of 
the chain of chords. That is, we consider the equality
\begin{align*}
\widetilde{S}
&=[S(1,2,\ldots, \underline{l+1},-l,\underline{l+2},n), \ 
S(-n,\underline{-(l+1)},l+3,\underline{-(l+2)},\ldots,-(2l+2))]\\
&\quad -S(1,2,\ldots, -(l-1), \underline{l+3,-(l+2),l+4,\ldots,
-(2l+1),-(2l+2),-n},
-l,l+2,n)\\
&\quad -S(-n,-(l+1),l+3, \underline{n,1,2,\ldots,l+1,-l},l+4,\ldots,
-(2l+1),-(2l+2)).
\end{align*}
\noindent
Here we remark that the third term of the right hand side 
is obtained up to sign from the second term by 
taking its mirror and applying the symplectic action 
\begin{align*}
&a_i \longmapsto -b_{2l+3-i}, \quad 
b_i \longmapsto a_{2l+3-i} \quad (i=1,2,\ldots,2l+2),
\\ 
&a_n \longmapsto -b_n, \quad 
b_n \longmapsto a_n.
\end{align*} 

We use Lemma \ref{lem:standard_form} 
to rewrite the second term as the linear combination $P$ of 
chord diagrams of the standard form. 
As for the third term, Lemma \ref{lem:mirror} and the fact that 
the bracket operation is equivariant with respect to the symplectic action 
show that we can rewrite it as the linear combination $Q$ 
obtained from $P$ 
by taking the mirror and applying the symplectic action 
to each chord diagram. 
It follows from Lemma \ref{lem:strong_form} that 
the sum $P+Q$ is rewritten as $2m \widetilde{S}$ 
by some even number $2m$. Therefore we have $\widetilde{S} \equiv 
2m \widetilde{S}$, which implies that $\widetilde{S} \equiv 0$ 
in $H_1 (\ag^+)$. 
\end{proof}
\begin{proof}[Proof of Theorem $\ref{thm:symp}$ when $k \equiv 3 \pmod 4$] 
The standard form consists of a unique unpaired vertex and 
$(2l+2)$ single paired chords, where $k=4l+3$. 
By Lemma \ref{lem:strong_form}, it suffices to 
show that the spider 
\[S(c,d_1,d_2,-d_1,d_3,-d_2,d_4,\ldots, 
-d_{2l},d_{2l+2},-d_{2l+1},-d_{2l+2})\]
is in $\Im [\,\cdot\,,\,\cdot\,]$. 
For that, we can use almost the same argument as 
the case where $k \equiv 2 \pmod 4$ 
by ignoring the unique unpaired vertex. 
Note that the algorithm of Lemma \ref{lem:standard_form} 
keeps the color $c$ of the unpaired vertex. 
\end{proof}

\begin{proof}[Proof of Theorem $\ref{thm:h1}$ $\mathrm{(iii)}$]
If we apply the last split exact sequence in Section \ref{sec:homology} 
to the present case, we have 
\[
H_1 (\ag) \cong 
H_1 (\ag^+)_{\mathfrak{sp}} \oplus H_1 (\ag(0)) = 
H_1 (\ag^+)_{\mathfrak{sp}} \oplus H_1 (\mathfrak{sp}(2g,{\mathbb Q})).
\]
As is well-known that $H_1 (\mathfrak{sp}(2g,{\mathbb Q})) = 0$. 
Hence after taking the limit, we obtain 


\[\lim_{g \to \infty} H_1 (\ag) \cong 
\lim_{g \to \infty} H_1 (\ag^+)_{\mathfrak{sp}}.\]
Now the first author's computation \cite[Theorem 6]{morita_GT} and 
Theorem \ref{thm:symp} show that 
\begin{align*}
\lim_{g \to \infty} H_1 (\ag^+)_{\mathfrak{sp}} &\cong 
\lim_{g \to \infty}  (\ag (1) \oplus 
(\wedge^2 H_\Q/\langle \omega_0 \rangle))_{\mathfrak{sp}}\\
&\cong \lim_{g \to \infty}  (S^3 H_\Q \oplus \wedge^3 H_\Q \oplus 
(\wedge^2 H_\Q/\langle \omega_0 \rangle))_{\mathfrak{sp}}
=0.
\end{align*}
This completes the proof.
\end{proof}

\section{Application to cohomology of moduli spaces of curves}\label{sec:moduli}

In this section, we apply one of our main theorems,
Theorem \ref{thm:h1}, to obtain a new proof of
the vanishing theorem of Harer (Theorem \ref{thm:harerv}).
First we recall the following foundational result
of Harer.
\begin{thm}[Harer {\cite{harer}}]\label{thm:harer}
The virtual cohomological dimension
of $\mathcal{M}_g^m$ is given by
$$
\mathrm{vcd}\,\mathcal{M}_g^m=
\begin{cases}
4g-5 & (g\geq 2, m=0)\\
4g-4+m & (g>0, m>0)\\
m-3  & (g=0)
\end{cases}
$$
so that the rational cohomology group
$$
H^k(\mathbf{M}_g^m;\mathbb{Q})\cong 
H^k(\mathcal{M}_g^m;\mathbb{Q})
$$
vanishes for any $k> \mathrm{vcd}\,\mathcal{M}_g^m$.
\end{thm}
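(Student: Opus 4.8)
The plan is to prove, for the integer $d$ appearing on the right-hand side of the displayed formula, the two inequalities $\mathrm{vcd}\,\mathcal{M}_g^m\le d$ and $\mathrm{vcd}\,\mathcal{M}_g^m\ge d$; the concluding vanishing assertion is then formal, since over $\mathbb{Q}$ the cohomology of a group agrees with that of any finite-index subgroup (transfer), a torsion-free finite-index subgroup of $\mathcal{M}_g^m$ has a finite-dimensional $K(\pi,1)$ of dimension $d$, and $H^\ast(\mathbf{M}_g^m;\mathbb{Q})\cong H^\ast(\mathcal{M}_g^m;\mathbb{Q})$ because $\mathbf{M}_g^m$ is a rational classifying space for the mapping class group. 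I would handle the punctured case $m\ge 1$ first and then deduce the closed case $g\ge 2$, $m=0$ from the Birman exact sequence $1\to\pi_1(\Sigma_g)\to\mathcal{M}_g^1\to\mathcal{M}_g\to 1$: since $\mathrm{cd}\,\pi_1(\Sigma_g)=2$ and, by the $m=1$ case, $\mathrm{vcd}\,\mathcal{M}_g^1=4g-3$, a Lyndon--Hochschild--Serre spectral sequence argument, keeping track of the dualizing modules on each side, pins $\mathrm{vcd}\,\mathcal{M}_g$ to $4g-5$. The classical low-genus situations ($g=0$, treated directly below, and $\mathcal{M}_1\cong SL(2,\mathbb{Z})$, virtually free of $\mathrm{vcd}=1$) are comparatively elementary.

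For the upper bound with $m\ge 1$, I would use the $\mathcal{M}_g^m$-equivariant deformation retraction of (decorated) Teichm\"uller space onto a spine $Y$ built from the dual of the cell decomposition indexed by filling arc systems, equivalently by ribbon graphs. The action of $\mathcal{M}_g^m$ on the contractible complex $Y$ is proper with finite stabilizers, so $\mathrm{vcd}\,\mathcal{M}_g^m\le\dim Y$, and a bookkeeping of the cell dimensions of the arc complex shows $\dim Y=d$. For $g=0$ this already yields $m-3$; for $g\ge 1$ it yields $4g-4+m$, and feeding $m=1$ into the Birman argument above settles the closed case as an upper bound.

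For the lower bound, the substantive route is to show, following Harer, that $\mathcal{M}_g^m$ is a virtual duality group in the sense of Bieri--Eckmann, with dualizing module the ``Steinberg module'' $\mathrm{St}_g^m$ given by the top reduced homology of the complex of arc systems that do \emph{not} fill the surface. Poincar\'e--Lefschetz duality applied to Teichm\"uller space, together with the retraction onto $Y$, identifies the compactly supported cohomology of the relevant pair with the homology of this non-filling arc complex; Harer's connectivity theorem asserts that the latter is homotopy equivalent to a wedge of spheres of a single dimension, so its reduced homology is concentrated in that degree and is a large free abelian group. Non-vanishing of $\mathrm{St}_g^m$ then forces $\mathrm{vcd}\,\mathcal{M}_g^m= d$. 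For small $g$ a cruder lower bound suffices: the $3g-3$ Dehn twists along a pants decomposition generate a subgroup $\mathbb{Z}^{3g-3}\subset\mathcal{M}_g$, which already matches $4g-5$ when $g=2$; but this is useless for $g>2$, where one genuinely needs the compactly supported cohomology computation.

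The main obstacle is exactly this lower bound in the closed and large-genus range. The hard part is the combinatorial topology of the arc complex --- proving that the subcomplex of non-filling arc systems is as highly connected as claimed, by a Solomon--Tits-style nerve-of-a-cover / bad-simplex argument analogous to the proof that a Tits building is a wedge of spheres --- and then plugging that connectivity statement into the Bieri--Eckmann duality machinery so that it outputs the precise value of $\mathrm{vcd}$ rather than a mere bound. By contrast, once the ribbon-graph decomposition is available the upper bound is a formal consequence together with the dimension count, and the reduction of the closed case to the once-punctured case via the Birman sequence is routine homological algebra.
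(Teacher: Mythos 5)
The paper does not prove this statement: Theorem \ref{thm:harer} is quoted from Harer \cite{harer} and used in Section \ref{sec:moduli} purely as an input to the proof of Theorem \ref{thm:vanishing}, so there is no internal argument to compare yours against. Measured against Harer's actual proof, your outline identifies the right ingredients in the right order: the equivariant retraction of decorated Teichm\"uller space onto the arc-system/ribbon-graph spine for the upper bound when $m\geq 1$, Bieri--Eckmann virtual duality with dualizing module the Steinberg module (the top reduced homology of the complex of non-filling arc systems, shown to be a wedge of spheres of one dimension) for the lower bound, and the Birman sequence $1\to\pi_1(\Sigma_g)\to\mathcal{M}_g^1\to\mathcal{M}_g\to 1$ to reach the closed case.

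Two points need sharpening before this could count as a proof. First, in the closed case the Lyndon--Hochschild--Serre spectral sequence by itself only yields the subadditivity $\mathrm{vcd}\,\mathcal{M}_g^1\leq 2+\mathrm{vcd}\,\mathcal{M}_g$, i.e.\ the \emph{lower} bound $\mathrm{vcd}\,\mathcal{M}_g\geq 4g-5$; the upper bound cannot be obtained from a spine (no $(4g-5)$-dimensional equivariant spine of Teichm\"uller space of a closed surface is known to exist), so it must be extracted from the duality-group formalism for an extension with $PD(2)$ kernel. Your phrase ``keeping track of the dualizing modules on each side'' is where all of that weight sits, and it should be made explicit that this step, not the spectral sequence, carries the closed-case upper bound. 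Second, the entire lower-bound half rests on Harer's connectivity theorem for the non-filling arc complex; you correctly flag this as the hard combinatorial core, but it is imported wholesale rather than argued, so the proposal is an accurate roadmap of \cite{harer} rather than a self-contained proof. As a description of how the cited result is actually established, it is essentially correct.
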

Here we denote by $\mathcal{M}_g^m$
the mapping class
group of $\Sigma_g$ with $m$ distinct marked points
and by $\mathbf{M}_g^m$ the moduli space of curves
of genus $g$ with $m$ distinct marked points.
As is well known, there exists a canonical isomorphism
$$
H^*(\mathbf{M}_g^m;\mathbb{Q})\cong 
H^*(\mathcal{M}_g^m;\mathbb{Q}) \quad (2g-2+m>0).
$$

Now we prove the following result which gives
an alternative proof of the theorem of Harer mentioned above.

\begin{thm}\label{thm:vanishing}
For any $g \ge 2$, the top degree
rational cohomology group
of the moduli space $\mathbf{M}_g^m\ (m=0,1)$ as well as 
the mapping class group $\mathcal{M}_g^m\ (m=0,1)$,
with respect to its virtual cohomological dimension,
vanishes. More precisely, we have
\begin{align*}
&H^{4g-5} (\mathbf{M}_g;\Q) \cong H^{4g-5}(\mathcal{M}_g;\Q)=0
\\
&H^{4g-3} (\mathbf{M}_g^1;\Q) \cong H^{4g-3}(\mathcal{M}_g^1;\Q)=0
\end{align*}
for any $g\geq 2$.
\end{thm}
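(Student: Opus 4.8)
The plan is to combine Theorem \ref{thm:h1}(iii) with the theorem of Kontsevich \cite{kontsevich1,kontsevich2} (see also Conant--Vogtmann \cite{cv}) identifying the stable homology of $\ag$ with the rational cohomology of moduli spaces of curves, and to feed the outcome into Harer's determination of the virtual cohomological dimension (Theorem \ref{thm:harer}). First I would record Kontsevich's theorem in the associative case in the precise bidegree form we need: it gives a natural isomorphism between the $\mathrm{Sp}$-invariant primitive part of the homology of the stable Lie algebra $\mathfrak{a}_\infty=\lim_g\ag$ and the direct sum, over pairs $(h,s)$ with $s\ge 1$ and $2h-2+s>0$, of the rational cohomology $H^\ast(\mathcal{M}_h^s;\Q)$ twisted by the sign representation of the symmetric group $S_s$ on the $s$ marked points, the weight-$w$, homological-degree-$k$ part corresponding to the cohomological degree $w-k$ part. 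In homological degree $k=1$ the left-hand side is exactly the abelianization $H_1(\mathfrak{a}_\infty)$: by the canonical splitting recalled in Section \ref{sec:homology}, $H_1(\mathfrak{a}_\infty)$ agrees with the $\mathrm{Sp}$-coinvariants of $\lim_g H_1(\ag^+)$ (since $H_1(\mathfrak{sp})=0$), and in degree $1$ every homology class is primitive. Concretely, a weight-$w$ class is represented by a single ``spider'' whose legs are paired up into a one-vertex ribbon graph, and such graphs assemble, for each admissible $(h,s)$ with $4h+2s-4=w$, into $H^{\,w-1}(\mathcal{M}_h^s;\Q)$ with the $S_s$-sign twist.

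Next I would exploit Harer's theorem to collapse this sum. Fix $g\ge 2$ and take $w=4g-2$. The admissible pairs $(h,s)$ with $4h+2s-4=4g-2$ are $(g,1),(g-1,3),(g-2,5),\dots$, and for the first one the relevant cohomological degree $w-1=4g-3$ equals $\mathrm{vcd}(\mathcal{M}_g^1)$, whereas for every other pair $h<g$ forces $\mathrm{vcd}(\mathcal{M}_h^s)=4h-4+s<4g-3$. Hence Theorem \ref{thm:harer} makes every summand but $H^{4g-3}(\mathcal{M}_g^1;\Q)$ vanish (and on this one $S_1$ acts trivially, so no twist remains), giving a natural isomorphism $H_1(\mathfrak{a}_\infty)_{4g-2}\cong H^{4g-3}(\mathcal{M}_g^1;\Q)$. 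By Theorem \ref{thm:h1}(iii) the left-hand side is zero---in fact its weight-$(4g-2)$ part already vanishes by Theorem \ref{thm:symp}, since $4g-2\ge 6$---so $H^{4g-3}(\mathcal{M}_g^1;\Q)=0$, and via the canonical isomorphism $H^\ast(\mathbf{M}_g^1;\Q)\cong H^\ast(\mathcal{M}_g^1;\Q)$ this is the second displayed vanishing statement.

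It remains to descend from one marked point to none. For this I would use the universal curve $\pi\colon\mathbf{M}_g^1\to\mathbf{M}_g$, whose fibre is $\Sigma_g$, and its Leray spectral sequence $E_2^{p,q}=H^p(\mathbf{M}_g;R^q\pi_\ast\Q)\Rightarrow H^{p+q}(\mathbf{M}_g^1;\Q)$, in which $R^0\pi_\ast\Q=\Q$, $R^1\pi_\ast\Q$ is the local system $H^1(\Sigma_g;\Q)$, $R^2\pi_\ast\Q=\Q$, and $R^q\pi_\ast\Q=0$ for $q\ge 3$. Since the virtual cohomological dimension bounds the rational cohomological dimension with arbitrary coefficient modules, Theorem \ref{thm:harer} makes $E_2^{p,q}=0$ for $p\ge 4g-4$, while $E_2^{p,q}=0$ for $q\ge 3$ because the fibre is a surface. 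Consequently, in total degree $4g-3$ the only possibly nonzero graded piece of $H^{4g-3}(\mathbf{M}_g^1;\Q)$ is $E_\infty^{4g-5,2}$, and no nonzero differential can enter or leave $E_r^{4g-5,2}$ for these dimension reasons, so $E_\infty^{4g-5,2}=E_2^{4g-5,2}=H^{4g-5}(\mathbf{M}_g;\Q)$. Therefore $H^{4g-5}(\mathbf{M}_g;\Q)\cong H^{4g-3}(\mathbf{M}_g^1;\Q)=0$, which together with the isomorphism to $H^{4g-5}(\Mg;\Q)$ is the first displayed statement.

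The crux is the first step: making the bidegree bookkeeping in Kontsevich's theorem fully explicit, so that the abelianization $H_1(\mathfrak{a}_\infty)$---the smallest positive homological degree---matches the moduli cohomology in exactly the virtual-cohomological-dimension degree rather than one degree off, and verifying that the one-vertex (``rose'') ribbon graphs of type $(h,s)$ really account for $H^{w-1}(\mathcal{M}_h^s;\Q)$ with the correct sign-twisted $S_s$-action. Here Theorem \ref{thm:harer} is indispensable not merely as a benchmark but as an active ingredient: the graph complex on its own only places the relevant cohomology within a band of degrees, and it is Harer's identification of the top nonzero degree with the vcd that (i) discards, for each weight, all summands with $s\ge 2$, leaving only the $\mathcal{M}_g^1$-term, and (ii) makes the Leray argument of the last step go through. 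In this way the vanishing of the stable abelianization is converted into Harer's vanishing theorem.
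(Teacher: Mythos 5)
Your argument is correct and follows essentially the same route as the paper: Kontsevich's associative graph-homology theorem together with the vanishing of $\lim_{g}H_1(\ag)$ gives $H^{4g-3}(\mathcal{M}_g^1;\Q)=0$, and the degenerate spectral sequence of the surface bundle $\mathbf{M}_g^1\to\mathbf{M}_g$ (equivalently, of the extension $1\to\pi_1\Sg\to\mathcal{M}_g^1\to\mathcal{M}_g\to 1$ used in the paper) combined with Harer's vcd bound descends this to $H^{4g-5}(\mathcal{M}_g;\Q)=0$. The only cosmetic difference is that you invoke Harer's theorem already in the first step to isolate the $(g,1)$ summand and produce an isomorphism $H_1(\mathfrak{a}_\infty)_{4g-2}\cong H^{4g-3}(\mathcal{M}_g^1;\Q)$, whereas the paper simply observes that the vanishing of the entire direct sum forces each summand, in particular the $m=1$ term, to vanish.
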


To prove Theorem \ref{thm:vanishing}, 
we recall the following theorem of Kontsevich 
which is the {\it associative} version 
of the three types of graph (co)homologies 
he presented in \cite{kontsevich1, kontsevich2}.

\begin{thm}[Kontsevich {\cite{kontsevich1, kontsevich2}}]\label{thm:kontsevich}
For $n \ge 1$, there exists an isomorphism 
\[ PH_k\big(\lim_{g \to \infty}\ag\big)_{2n}
\cong 
\bigoplus_{\begin{subarray}{c}
2g-2+m=n\\ m>0
\end{subarray}}
H^{2n-k} (\mathbf{M}_g^m;\Q)^{\mathfrak{S}_m}.\]
\end{thm}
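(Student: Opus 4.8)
The plan is to realize both sides of the asserted isomorphism as the homology of a single combinatorial object, the \emph{associative} (ribbon) graph complex, which is the member of Kontsevich's trio attached to $\ag$. First I would record that the Lie algebra homology of any Lie algebra is a graded-commutative, cocommutative Hopf algebra, so by Milnor--Moore $H_\ast(\lim_{g\to\infty}\ag)$ is the free graded-commutative algebra on its primitive part $PH_\ast$. On the combinatorial side this corresponds to the fact that a disconnected graph is a product of its connected components, so passing to $PH_k$ is exactly the restriction to \emph{connected} ribbon graphs; this is why only $PH_k$ appears. Next, using that $\ag=\ag^+\rtimes\symp{Q}$ (as in Section~\ref{sec:homology}) with $\symp{Q}$ reductive, the Hochschild--Serre spectral sequence reduces the computation to the homology of the $\mathrm{Sp}$-invariant Chevalley--Eilenberg complex $\bigl(\Lambda^\ast\ag^+\bigr)^{\mathrm{Sp}}$, invariants under a reductive group being an exact functor so that they commute with passage to homology.

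The heart of the argument is stable symplectic invariant theory. The degree-$k$ term of the invariant complex is $\bigl(\Lambda^k\ag^+\bigr)^{\mathrm{Sp}}$, and by the first fundamental theorem for the symplectic group the space $\lim_{g\to\infty}\bigl(\Hq^{\otimes 2N}\bigr)^{\mathrm{Sp}}$ is spanned by the perfect matchings (chord diagrams) of $2N$ points; the second fundamental theorem shows that the Pfaffian-type relations involve more than $g$ indices and hence die as $g\to\infty$, so the matchings become linearly independent in the limit. Since $\ag(k)=(\Hq^{\otimes(k+2)})^{\Z/(k+2)\Z}$ is a cyclically symmetric $(k+2)$-valent ``spider'' vertex (Section~\ref{sec:symp}), applying this invariant theory to a $k$-fold wedge turns the contraction of all legs against $\omega_0$ into a choice of edge-pairing, the cyclic symmetry recording a ribbon (fat) structure at each vertex. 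Thus $\bigl(\Lambda^k\ag^+\bigr)^{\mathrm{Sp}}$ is, stably, freely spanned by connected ribbon graphs with $k$ vertices, and inspection of the Chevalley--Eilenberg differential $d(x_1\wedge\cdots\wedge x_k)=\sum_{i<j}[x_i,x_j]\wedge\cdots$ using the spider bracket of Figure~\ref{fig:spider_bracket} identifies it with the signed sum of edge contractions. Hence $PH_k(\lim_{g\to\infty}\ag)_{2n}$ is the homology of the connected ribbon graph complex.

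It remains to match degrees and to identify this graph homology geometrically. For a connected ribbon graph the total weight is $\sum_v(\mathrm{valence}_v-2)=2E-2V=2(E-V)$, so the weight $2n$ forces $E-V=n$, while the Chevalley--Eilenberg degree is $V=k$; this explains the summation range $2g-2+m=n$ once $m$ is read off as the number of boundary cycles via $V-E+m=2-2g$. The geometric input is the ribbon-graph cell decomposition of moduli space (Harer--Mumford--Thurston, Penner, Kontsevich): the space of metric ribbon graphs of type $(g,m)$ is an orbifold homotopy equivalent to $\mathbf{M}_g^m$, decomposed into open cells indexed by graphs with edge contraction as incidence, so its cellular chain complex computes $H^{\mathrm{BM}}_\ast$ in cell-dimension $E$. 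Splitting off the $\mathbb{R}^m$ of perimeters and applying orbifold Poincar\'e duality on the $(6g-6+2m)$-dimensional $\mathbf{M}_g^m$ sends $H^{\mathrm{BM}}_{E}$ into $H^{6g-6+3m-n-k}(\mathbf{M}_g^m;\Q)=H^{2n-k}(\mathbf{M}_g^m;\Q)$. Finally, because the boundary cycles of an abstract ribbon graph are unordered while the $m$ marked points of $\mathbf{M}_g^m$ are labelled, one lands in the $\mathfrak{S}_m$-invariant part, giving the stated direct sum.

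The main obstacle is the middle step carried out \emph{simultaneously with the differential}. One must prove not merely that chord diagrams span the stable invariants but that they remain linearly independent in the limit, and that taking $\mathrm{Sp}$-invariants of the Chevalley--Eilenberg complex genuinely computes the homology of $\lim_{g\to\infty}\ag$ with no surviving contribution from the non-trivial $\mathrm{Sp}$-isotypic components. Coupled with the care needed for the orientation/sign conventions that make the Chevalley--Eilenberg differential equal to signed edge contraction, and the orientation local system entering Poincar\'e duality, this is the technical core of Kontsevich's theorem; by contrast the ribbon-graph decomposition of $\mathbf{M}_g^m$, though deep, may be invoked as a black box.
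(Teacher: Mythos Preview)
The paper does not prove this statement at all: Theorem~\ref{thm:kontsevich} is quoted as a result of Kontsevich \cite{kontsevich1,kontsevich2} (with the detailed exposition of Conant--Vogtmann \cite{cv} in the background) and is used purely as a black box in the proof of Theorem~\ref{thm:vanishing}. So there is no ``paper's own proof'' to compare your proposal against.

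That said, your sketch is a faithful outline of the standard argument. The reduction via Hochschild--Serre and reductivity of $\symp{Q}$ to the $\mathrm{Sp}$-invariant Chevalley--Eilenberg complex, the identification of stable symplectic invariants of tensor powers with chord pairings (hence ribbon graphs, the cyclic symmetry at each vertex supplying the fat structure), the matching of the CE differential with edge contraction, and the passage to $H^{2n-k}(\mathbf{M}_g^m;\Q)^{\mathfrak{S}_m}$ via the ribbon-graph decomposition of decorated moduli space and Poincar\'e--Lefschetz duality are exactly the steps in Kontsevich's proof as written out in \cite{cv}. Your dimension count $V=k$, $E-V=n$, $\dim(\mathbf{M}_g^m\times\mathbb{R}_{>0}^m)=3n$, hence cohomological degree $3n-E=2n-k$, is correct. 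You are also right that the delicate points are (i) that taking $\mathrm{Sp}$-invariants commutes with homology in the stable limit (this uses reductivity together with the stability of the invariant theory, not just exactness of invariants on a fixed $g$), (ii) the sign/orientation bookkeeping identifying the two differentials, and (iii) the orientation issues in orbifold duality. None of this is addressed in the present paper, which simply cites the result.
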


\begin{proof}[Proof of Theorem $\ref{thm:vanishing}$]
First we prove the vanishing $H^{4g-3}(\mathcal{M}_g^1;\Q)=0$ for any $g\geq 1$.
By Theorem \ref{thm:h1}, we know that
$\lim_{g\to\infty} H_1(\ag)_{2n}=0$ for any $n$.
If we substitute this in Theorem \ref{thm:kontsevich}, then
we obtain
$$
H^{4g-5+2m}(\mathcal{M}_{g}^{m};\Q)^{\mathfrak{S}_m}=0
\quad \text{for any $m\geq 1$}.
$$
If we put $m=1$, then we can conclude that
$$
H^{4g-3}(\mathcal{M}_{g}^1;\Q)=0
\quad \text{for any $g\geq 1$}.
$$

Next, we deduce $H^{4g-5}(\mathcal{M}_g;\Q)=0\ (g\geq 2)$ from the above. For this, consider the group extension
$$
1\longrightarrow
\pi_1\Sg\longrightarrow 
\mathcal{M}_{g}^1\longrightarrow 
\mathcal{M}_g\longrightarrow
1\quad (g\geq 2)
$$
and let $\{E^{p,q}_r, d^{p,q}_r\}$ denote
the spectral sequence associated to the
above extension for the rational cohomology
group. We have 
$E^{p,q}_2\cong H^p(\mathcal{M}_g;H^q(\pi_1\Sg;\Q))$.
As is well known, there exists a natural isomorphism
$H^q(\pi_1\Sg;\Q)\cong H^q(\Sg;\Q)$ and,
by Theorem \ref{thm:harer},
$H^p(\mathcal{M}_g;\mathcal{H})=0$
for any $p>4g-5$
and for any rational twisted coefficients $\mathcal{H}$.
It follows that the only $E_2$-term,
in total degree $p+q=4g-3$,
which may survive in the $E_{\infty}$ term 
is $E^{4g-5,2}_2\cong H^{4g-5}(\mathcal{M}_g;\Q)$.
On the other hand, it is easy to see that
$$
E_2^{4g-5,2}\cong E_3^{4g-5,2}\cong\cdots\cong
E_\infty^{4g-5,2} = H^{4g-3}(\mathcal{M}_g^1;\Q)=0.
$$
This is a special case of the
fact, proved in \cite{morita87}, that the above
spectral sequence collapses at the $E_2$-term.
We can now conclude that
$H^{4g-5}(\mathcal{M}_g;\Q)=0$ as required.  
\end{proof}

\begin{remark}\label{rem:genus01}
In contrast with the above result, the situation in the cases of genus $0$ and $1$ is 
completely different. According to Getzler \cite{getzler0}, 
the rational cohomology group of top degree
$H^{m-3}(\mathbf{M}_{0}^{m};\Q)$ has dimension $(m-2)!$. 
In \cite{getzler1}, Getzler also determined the $\mathfrak{S}_m$-equivariant Serre characteristic 
for $\mathbf{M}_{1}^{m}$. In particular, the top degree
$\mathfrak{S}_m$-invariant rational cohomology group
$H^{m}(\mathbf{M}_{1}^{m};\Q)^{\mathfrak{S}_m}$ is 
highly non-trivial for infinitely many $m$. 

In \cite{morita_GT}, 
the first author determined the weight $2$ part $H_1(\ag^+)_2$
of the abelianization of $\ag^+$ and by
applying the theorem of Kontsevich 
cited above (Theorem \ref{thm:kontsevich}), 
he constructed a series of cohomology classes in
$H^{4m+1}(\mathbf{M}_1^{4m+1})^{\mathfrak{S}_{4m+1}}$
for $m=1,2,\ldots$.
Then Conant \cite{conant} proved that
these classes are all non-trivial. It would be an interesting
problem to seek for possible special property of these
classes among the whole classes which Getzler determined.
\end{remark}

\section{Concluding remarks}\label{sec:conclusion}

In this section, we make a few remarks concerning the ingredients of this paper.

\begin{remark}\label{rem:h2}
We have been investigating not only the first homology
groups of Lie algebras $\mathfrak{a}_g$ and $\mathfrak{h}_{g,1}$
but also higher homology groups as well. In particular,
we had already a glimpse of considerable difference
between the structures of $H_2(\mathfrak{a}_{g})$ and $H_2(\mathfrak{h}_{g,1})$. 
We will discuss this in a forthcoming paper.
\end{remark}

\begin{remark}\label{rem:other}
The Lie algebra $\mathfrak{a}_g$ appeared in a recent 
work of Enomoto and Satoh \cite{es} and also in
Kawazumi and Kuno \cite{kk}, where they
found certain new roles of this Lie algebra.
We refer to the above cited papers for details.
\end{remark}


\bibliographystyle{amsplain}

\end{document}